\definecolor{rot}{rgb}{0.75 , 0.00 , 0.00}
\definecolor{blau}{rgb}{0.00 , 0.00 , 0.5}
\theoremstyle{definition}
\newtheorem{defn}{Definition}
\theoremstyle{plain}
\newtheorem{thm}[defn]{Theorem}
\newtheorem{prop}[defn]{Proposition}
\newtheorem{lmm}[defn]{Lemma}
\newtheorem{cor}[defn]{Corollary}
\theoremstyle{remark}
\newtheorem*{rmk}{Remark}
\newcommand{\vertiii}[1]{{\left\vert\kern-0.25ex\left\vert\kern-0.25ex\left\vert #1
    \right\vert\kern-0.25ex\right\vert\kern-0.25ex\right\vert}}
\DeclareMathOperator{\BUC}{BUC}
\DeclareMathOperator{\Id}{Id}
\DeclareMathOperator{\VO}{VO}
\DeclareMathOperator{\VMO}{VMO}
\DeclareMathOperator{\BDO}{BDO}
\title{Fredholmness of Toeplitz operators on the Fock space}
\author{Robert Fulsche, Raffael Hagger}
\begin{document}
\maketitle
\renewcommand{\theenumi}{(\roman{enumi})}
\begin{abstract}
The Fredholm property of Toeplitz operators on the $p$-Fock spaces $F_\alpha^p$ on $\mathbb{C}^n$ is studied. A general Fredholm criterion for arbitrary operators from the Toeplitz algebra $\mathcal{T}_{p,\alpha}$ on $F_\alpha^p$ in terms of the invertibility of limit operators is derived. This paper is based on previous work, which establishes corresponding results on the unit balls $\mathbb{B}^n$ \cite{Hagger}.

\medskip
\textbf{AMS subject classification:} Primary: 47B35; Secondary: 47L80, 47A53, 47A10

\medskip
\textbf{Keywords:} Toeplitz operators, Fock spaces, essential spectrum, limit operators  
\end{abstract}

\section{Introduction}
Consider the weighted Gaussian measure $d\mu_\nu(z) = (\nu/\pi)^n e^{-\nu|z|^2}dz$ on $\mathbb{C}^n$, where $\nu > 0$ and $dz$ denotes the usual Lebesgue measure on $\mathbb{C}^n \cong \mathbb{R}^{2n}$. The Fock space $F_\alpha^p$ for $\alpha > 0$, $1 < p < \infty$ is the closed subspace of $L^p(\mathbb{C}^n, \mu_{p\alpha/2})$ consisting of entire functions. Toeplitz operators on these spaces are defined to be the composition of a multiplication operator $M_f$, where $f \in L^\infty(\mathbb{C}^n)$, and a certain projection $P_\alpha$ (see below) back onto the closed subspace $F_\alpha^p$, i.e.
\begin{align*}
T_f = P_\alpha M_f.
\end{align*}
The function $f$ is then called the symbol of $T_f$ or $M_f$, respectively.

When studying such Toeplitz operators a natural property to consider is the Fredholmness of such operators, that is: are the kernel and the cokernel of $T_f$ finite dimensional? Inspired by Toeplitz operators on other spaces, e.g.~on the Hardy space or on the Bergman space over the unit ball, one expects that the information about the Fredholmness of $T_f$ can be extracted from the symbol, more precisely from the behaviour of $f$ near infinity \cite{Bottcher_Silbermann, Upmeier}. For symbols which extend continuously to the boundary sphere of $\mathbb{C}^n$ theorems of the following form are known: If $f$ is nowhere zero on the boundary sphere, then $T_f$ is Fredholm \cite{Bottcher_Wolf_1994}\cite[Theorem 2.1]{Bottcher_Wolf_1997}. Results for a more general class of symbols are known for the case $p=2$: If $f$ is of vanishing oscillation, then $T_f$ is Fredholm if $f$ is bounded away from zero close to the boundary (cf.~\cite{Perala_Virtanen, Xia_Zheng} for the Bergman space or \cite{Berger_Coburn, Stroethoff} for the Fock space). Corresponding results can be proven without the restriction $p=2$, cf.~Section 6 in this article. For Toeplitz operators with more general symbols, or even other operators from the Toeplitz algebra $\mathcal{T}_{p,\alpha}$, which is just the norm closure of the algebra generated by Toeplitz operators, such results were missing.

Only recently the methods of limit operators, known from the theory of band-dominated operators on sequence spaces, were adapted to the case of Toeplitz operators on Bergman- and Fock spaces. It was realized that the notion of ``boundary of $\mathbb{C}^n$ (resp. $\mathbb{B}^n$ in the Bergman space case)" in the usual sense was too restrictive. Instead, one densely embeds $\mathbb{C}^n$ (resp. $\mathbb{B}^n$) into the maximal ideal space $\mathcal{M}$ of $\BUC(\mathbb{C}^n)$ (resp.~$\BUC(\mathbb{B}^n)$), the space of bounded uniformly continuous functions. We thus consider $\mathcal{M} \setminus \mathbb{C}^n$ as the boundary of $\mathbb{C}^n$ and the boundary values of an operator $A \in \mathcal{T}_{p,\alpha}$ at $\mathcal{M} \setminus \mathbb{C}^n$ are obtained by ``shifting" $A$ to the boundary (we will make this precise below). For each $x \in \mathcal{M} \setminus \mathbb{C}^n$ we will get a boundary operator $A_x$, called a limit operator. In \cite{Mitkovski_Suarez_Wick} and \cite{Suarez} a limit operator theory for the Bergman space over the unit ball was developed, whereas the corresponding results for the Fock space were derived in \cite{Bauer_Isralowitz}. In both cases it was shown that operators in the Toeplitz algebra are compact if and only if all of their limit operators vanish. In \cite{Hagger} some ideas from the limit operator theory on sequence spaces were adapted to show that an operator in the Toeplitz algebra over the unit ball is Fredholm if and only if all of its limit operators are invertible. Let us briefly review a few of these ideas and where they originate. The main objective of study in limit operator theory is the class of band-dominated operators, which was first studied in its entirety by Simonenko \cite{Simonenko_1, Simonenko_2}. Specific classes of band-dominated operators were also considered earlier, e.g.~by Gohberg and Krein \cite{Gohberg_Krein}. In 1985 it was shown by Lange and Rabinovich \cite{Lange_Rabinovich} (see also \cite{Rabinovich_Roch_Silbermann_1, Rabinovich_Roch_Silbermann_2}) that a band-dominated operator is Fredholm if and only if all of its limit operators are invertible and their inverses are uniformly bounded. For the subsequent 25 years it was unclear whether the uniform boundedness condition is necessary or not. Various subclasses of operators have been studied by many different authors and in every case it was shown that the uniform boundedness condition is actually redundant, i.e.~the inverses are automatically uniformly bounded if all limit operators are invertible. For the general case this was then shown by Lindner and Seidel \cite{Lindner_Seidel} in 2014. The corresponding result for (essential) norms can be found in \cite{Hagger_Lindner_Seidel}. For a more detailed history of limit operators on sequence spaces we refer to \cite{Lindner} and \cite{Rabinovich_Roch_Silbermann_2}.

In this paper, we further adapt these results to the Fock space to obtain the following main theorem:
\begin{thm} \label{thm1}
An operator $A \in \mathcal{T}_{p,\alpha}$ is Fredholm if and only if all of its limit operators are invertible.
\end{thm}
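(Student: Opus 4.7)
The \emph{only if} direction is comparatively soft. The Toeplitz algebra $\mathcal{T}_{p,\alpha}$ is invariant under conjugation by the Weyl-type translations $W_z$, $z \in \mathbb{C}^n$, and each such conjugation preserves the Fredholm property. Given $A \in \mathcal{T}_{p,\alpha}$ Fredholm, we pick a regulariser $B \in \mathcal{T}_{p,\alpha}$ with $I - AB,\, I - BA \in \mathcal{K}(F_\alpha^p)$. By the compactness criterion of Bauer and Isralowitz \cite{Bauer_Isralowitz}, the limit operators of $I-AB$ and $I-BA$ vanish at every $x \in \mathcal{M} \setminus \mathbb{C}^n$. Since the assignment $A \mapsto A_x$ is multiplicative on $\mathcal{T}_{p,\alpha}$, this reads $A_x B_x = I = B_x A_x$, so every $A_x$ is invertible.

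For the \emph{if} direction our plan is to adapt the band-dominated operator framework of \cite{Hagger}, which in turn follows the sequence-space theory of \cite{Lindner_Seidel, Hagger_Lindner_Seidel}, to the Fock space setting. First, we introduce a family of spatial cut-offs $P_K = P_\alpha M_{\chi_K}$ attached to a nice covering of $\mathbb{C}^n$ by bounded sets and, with them, a notion of band-dominated operators on $F_\alpha^p$. Second, we show that $\mathcal{T}_{p,\alpha}$ is contained in the band-dominated operators; this rests on the pointwise exponential off-diagonal decay of the normalised Fock reproducing kernel, which allows one to approximate any Toeplitz operator in norm by operators of finite propagation. Third, and this is the heart of the matter, we prove the Fock space analogue of the Lindner--Seidel dichotomy: a band-dominated operator on $F_\alpha^p$ is Fredholm if and only if all its limit operators are invertible with \emph{uniformly bounded inverses}, and the uniform boundedness clause is in fact automatic. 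Finally we assemble the inverses $(A_x^{-1})_x$ into a band-dominated Fredholm regulariser of $A$, yielding Fredholmness.

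The main obstacle will be the automatic uniform boundedness of the inverses in the third step. In the $\ell^p(\mathbb{Z}^n, X)$ setting of \cite{Lindner_Seidel} the proof extracts, by a sequential diagonal argument, a limit operator whose inverse attains the supremum of the norms; this uses implicitly that the shift group $\mathbb{Z}^n$ is countable. On $F_\alpha^p$ the conjugating group is the continuous $\mathbb{C}^n$ and the natural index set of limit operators is the much larger compactification $\mathcal{M} \setminus \mathbb{C}^n$, so a naive sequential argument breaks down. We expect to circumvent this, as in \cite{Hagger}, by fixing a sufficiently dense lattice $\Lambda \subset \mathbb{C}^n$ and showing, via the strong-operator continuity of $z \mapsto W_{-z} A W_z$ on bounded subsets of $\mathcal{T}_{p,\alpha}$, that testing invertibility of limit operators along sequences in $\Lambda$ is equivalent to testing at all of $\mathcal{M} \setminus \mathbb{C}^n$. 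Once this reduction is in place, the Lindner--Seidel argument transports with only cosmetic modifications. A secondary technical point that we will need along the way --- that each $A_x$ itself lies in $\mathcal{T}_{p,\alpha}$ --- should follow from the $W_z$-invariance of the algebra together with the Bauer--Isralowitz compactness characterisation.
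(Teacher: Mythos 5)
Your overall architecture matches the paper's: introduce a notion of band-dominated operators on the Fock space, show that $\mathcal{T}_{p,\alpha}$ embeds into it (via off-diagonal decay of the Fock kernel), derive a Fredholm criterion via local invertibility and limit operators, and then remove the uniform-boundedness hypothesis. Your ``only if'' direction is also essentially correct, though it carries a small circularity: you take a regulariser $B \in \mathcal{T}_{p,\alpha}$, but it is not a priori clear that a Fredholm operator in $\mathcal{T}_{p,\alpha}$ admits a regulariser \emph{inside} $\mathcal{T}_{p,\alpha}$, nor that $B_{z_\gamma}$ converges for arbitrary $B\in\mathcal{L}(F_\alpha^p)$; the paper avoids this by a direct norm estimate
\[
\| f\| \leq \|B\|\,\|\widetilde{C}_{z_\gamma} A \widetilde{C}_{-z_\gamma} f\| + \|\widetilde{C}_{z_\gamma}(I-BA)\widetilde{C}_{-z_\gamma} f\|
\]
for any Fredholm regulariser $B \in \mathcal{L}(F_\alpha^p)$, combined with the compactness characterisation for $I-BA$ and $I-AB$, and a separate argument to get surjectivity via duality. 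This shows $A_x$ invertible and simultaneously yields the norm bound $\|A_x^{-1}\| \leq \|(A + \mathcal{K})^{-1}\|$ without needing the regulariser to lie in the algebra.

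Where your proposal genuinely diverges — and where I think it is not viable as stated — is the mechanism for removing the uniform boundedness clause. You propose a reduction to a countable lattice $\Lambda \subset \mathbb{C}^n$, citing \cite{Hagger}, to recover a ``sequential'' Lindner--Seidel argument. That is not what \cite{Hagger} does, and not what this paper does either. The actual argument (Lemma~\ref{lmm30}, following Lindner--Seidel) shows that $\inf\{\nu(\hat{A}_x); x \in \mathcal{M}\setminus\mathbb{C}^n\}$ is \emph{attained} at some $y$, using three ingredients: (1) localisation of the lower norm $\nu$ to balls $B(w, r_t)$ with uniform error over $\{\hat{A}\}\cup\{\hat A_x\}$ (Proposition~\ref{prop26}); (2) SOT-compactness of the set of limit operators, a consequence of the compactness of $\mathcal{M}$ and continuity of $x \mapsto A_x$ (Proposition~\ref{prop27}); and (3) a shift lemma (Lemma~\ref{lmm28}) that recenters the near-minimising test functions to the origin by replacing $A_x$ with some $A_{y}$, $y$ again in $\mathcal{M}\setminus\mathbb{C}^n$. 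One picks a minimising \emph{sequence} $(x_j)$ (always possible for an infimum over a set), recenters it to $(y_j)$, and then extracts a convergent \emph{subnet} of $(\hat A_{y_j})$ — the compactness in SOT replaces sequential compactness, so no metrizability and no lattice is needed. Your lattice idea would additionally require that $\nu(\hat A_x)$ for arbitrary $x$ be controlled by $\nu(\hat A_x)$ for $x$ arising from sequences in $\Lambda$, and it is not clear how to establish that without essentially redoing Propositions~\ref{prop26}--\ref{prop27} and Lemma~\ref{lmm28}, after which the lattice is unnecessary. Finally, your last step ``assemble the inverses $(A_x^{-1})_x$ into a band-dominated regulariser'' is also not the route taken: the paper argues by contradiction using the local regularisers from Proposition~\ref{prop21} and the limit-operator construction in Proposition~\ref{prop22}, never gluing a global regulariser from the family $(A_x^{-1})$. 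The secondary claim that $A_x \in \mathcal{T}_{p,\alpha}$ is not used by the paper; what the argument needs instead is that $\hat A_x$ is band-dominated, which follows by showing $\|\hat A_x - (A_m)_x\| \le \|\hat A - A_m\|$ and $\omega((A_m)_x) \le \omega(A_m)$ for band approximants $A_m$.
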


Theorem \ref{thm1} improves an earlier result by Bauer and Isralowitz (\cite[Theorem 7.2]{Bauer_Isralowitz}) for $p =2$, where the inverses of the limit operators are additionally assumed to be uniformly bounded. Hence, besides generalizing to arbitrary $p$, we show that the uniform boundedness condition is redundant. We closely follow the lines of \cite{Hagger} in this paper. Since the Fock space is in some aspects simpler than the Bergman space, we can avoid some technical difficulties and can focus more on the actual ideas.

The paper is organized as follows: In Section 2 we introduce our notation and recall some basic results. In Section 3 we introduce band-dominated operators and provide some properties of them. Section 4 will be devoted to the theory of limit operators and the main theorem of this paper. In Section 5 methods similar to those from Section 4 will be sketched to derive results on the essential norm of operators from the Toeplitz algebra. In the end, Section 6 will be used to show how the expected results on the Fredholmness of Toeplitz operators with symbols of vanishing oscillation can be derived from our main theorem.

\section{Notation and basic definitions}
In this section we present the main definitions and some basic results which are well-known and/or easy to prove.

For $\nu > 0$ let $d\mu_{\nu}$ denote the Gaussian measure
\begin{align*}
d\mu_{\nu}(z) = \Big ( \frac{\nu}{\pi} \Big )^n e^{-\nu |z|^2} dz
\end{align*}
on $\mathbb{C}^n$, where $dz$ denotes the Lebesgue measure on $\mathbb{C}^n \simeq \mathbb{R}^{2n}$ and $| \cdot |$ denotes the norm coming from the standard hermitian inner product $\langle \cdot, \cdot \rangle$ on $\mathbb{C}^n$, which is linear in the first and antilinear in the second component. $d\mu_\nu$ is easily seen to be a probability measure. The space $L_\alpha^p$ is given by
\begin{align*}
L_\alpha^p = \{ f: \mathbb{C}^n \to \mathbb{C}; f \text{ measurable and } \| f \|_{p,\alpha} < \infty \} = L^p (\mathbb{C}^n, d\mu_{p\alpha/2})
\end{align*}
for $\alpha > 0$ and $1 < p < \infty$, where
\begin{align*}
\| f\|_{p,\alpha}^p := \int_{\mathbb{C}^n} |f(z)|^p d\mu_{p\alpha/2}(z).
\end{align*} 
Further, $F_\alpha^p$ denotes the closed subspace of entire functions in $L_\alpha^p$. Throughout this paper we will assume, unless stated otherwise, $p \in (1, \infty)$ and $\alpha \in (0,\infty)$ without further mentioning it.

For a Banach space $X$ we denote by $\mathcal{L}(X)$ the space of bounded linear operators on $X$ and by $\mathcal{K}(X)$ the ideal of compact operators. By $M_f$ we will denote the operator of multiplication by the function $f \in L^\infty(\mathbb{C}^n)$. We will use this symbol for both multiplication operators acting as $L_\alpha^p \to L_\alpha^p$ or $F_\alpha^p \to L_\alpha^p$ without mentioning $p$ or $\alpha$ in the notation. A Toeplitz operator is an operator of the form $P_\alpha M_f: F_\alpha^p \to F_\alpha^p$, where $f$ is called the symbol of the operator. Here $P_\alpha$ is the projection $L_\alpha^p \to F_\alpha^p$ onto the closed subspace given by the formula in Proposition \ref{propzhu1} below. By $\mathcal{T}_{p,\alpha}$ we denote the norm-closed subalgebra of $\mathcal{L}(F_\alpha^p)$ generated by all Toeplitz operators with bounded symbols. A net of bounded linear operators $(A_\gamma)_\gamma$ on some Banach space $X$ is said to converge $\ast$-strongly to $A \in \mathcal{L}(X)$ if $A_\gamma \to A$ strongly and $A_\gamma^\ast \to A^\ast$ strongly, where $B^\ast$ denotes the Banach space adjoint of $B \in \mathcal{L}(X)$. For a set $M \subseteq \mathbb{C}^n$ we denote its characteristic function by $\chi_M$. By $B(z, r)$ we will denote the Euclidean ball around $z \in \mathbb{C}^n$ with radius $r > 0$.

We will need the following result regarding projections from $L_\alpha^p$ to $F_\alpha^p$:

\begin{prop}[{\cite[Theorem 7.1]{Janson_Peetre_Rochberg}}]\label{propzhu1}
The linear operator $P_\alpha : L_\alpha^p \to L_\alpha^p$ given by
\begin{align*}
(P_\alpha f)(z) = \int_{\mathbb{C}^n} e^{\alpha \langle z, w\rangle}f(w) d\mu_{\alpha}(w)
\end{align*}
is a bounded projection onto $F_\alpha^p$. In particular, $P_\alpha|_{F_\alpha^p} = \Id$.
\end{prop}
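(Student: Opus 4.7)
The plan is to establish three things in turn: well-definedness together with holomorphicity of $P_\alpha f$, the $L^p$-boundedness, and the reproducing (projection) property.

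For the first step I would exploit the elementary identity
\begin{align*}
\alpha\,\mathrm{Re}\,\langle z,w\rangle = \tfrac{\alpha}{2}(|z|^2+|w|^2)-\tfrac{\alpha}{2}|z-w|^2,
\end{align*}
which yields
\begin{align*}
|e^{\alpha\langle z,w\rangle}|\,e^{-\alpha|z|^2/2}\,e^{-\alpha|w|^2/2}=e^{-\alpha|z-w|^2/2}.
\end{align*}
Combined with Hölder's inequality (applied to $e^{\alpha\langle z,w\rangle}\cdot e^{-\alpha|w|^2/2}\cdot e^{\alpha|w|^2/2}\cdot f(w)e^{-\alpha|w|^2}$) this gives a finite pointwise bound $|(P_\alpha f)(z)|\le C\,e^{\alpha|z|^2/2}\|f\|_{p,\alpha}$. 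Because the kernel $e^{\alpha\langle z,w\rangle}$ is entire in $z$ and the bound above provides a dominating function on compact sets, a standard dominated-convergence argument justifies differentiation under the integral sign, so $P_\alpha f$ is entire.

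For $L^p$-boundedness I would avoid the Schur test and instead reduce to Young's inequality. Setting $g(w):=|f(w)|e^{-\alpha|w|^2/2}$ and $G(u):=e^{-\alpha|u|^2/2}$, the Gaussian identity above immediately gives
\begin{align*}
|(P_\alpha f)(z)|\,e^{-\alpha|z|^2/2}\le (\alpha/\pi)^n\,(G\ast g)(z).
\end{align*}
Raising to the $p$-th power and integrating against $dz$ converts $\|P_\alpha f\|_{p,\alpha}^p$ (up to a fixed constant depending on $p,\alpha,n$) into $\|G\ast g\|_{L^p(dz)}^p$. Young's convolution inequality bounds this by $\|G\|_{L^1}^p\|g\|_{L^p(dz)}^p$, and $\|g\|_{L^p(dz)}^p$ is, again up to a normalising constant, just $\|f\|_{p,\alpha}^p$. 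This also shows that $P_\alpha f\in L_\alpha^p$, hence (together with entirety) $P_\alpha f\in F_\alpha^p$.

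For the projection property I would verify $P_\alpha f=f$ first on monomials. Expanding the kernel as $e^{\alpha\langle z,w\rangle}=\sum_{\beta}\tfrac{\alpha^{|\beta|}}{\beta!}z^\beta\overline{w}^\beta$ and using the well-known orthogonality relation $\int_{\mathbb{C}^n}\overline{w}^\beta w^\gamma\,d\mu_\alpha(w)=\delta_{\beta\gamma}\beta!/\alpha^{|\beta|}$ (a direct computation in polar coordinates), one obtains $P_\alpha(w^\gamma)(z)=z^\gamma$; linearity extends this to all polynomials. Polynomials are dense in $F_\alpha^p$ (by Taylor-series truncation combined with the standard pointwise bound $|f(z)|\lesssim\|f\|_{p,\alpha}e^{\alpha|z|^2/2}$ for $f\in F_\alpha^p$), so the already-proved boundedness of $P_\alpha$ extends the identity $P_\alpha f=f$ to all of $F_\alpha^p$. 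This simultaneously yields $P_\alpha|_{F_\alpha^p}=\mathrm{Id}$ and, via $P_\alpha^2=P_\alpha$ on the range, that $P_\alpha$ is a projection. The main technical obstacle is the $L^p$ boundedness step; once the correct Gaussian weight is absorbed into $g$, however, Young's inequality makes this clean, so the real work lies just in carrying the normalising constants through carefully.
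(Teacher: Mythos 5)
The paper itself does not prove this proposition: it simply imports it as a citation to Janson, Peetre and Rochberg, so there is no ``paper proof'' to compare against. Your argument is essentially correct and self-contained, and it uses a slightly different route than what one typically finds in the standard references (e.g.\ \cite{Janson_Peetre_Rochberg} or Zhu's Fock-space monograph), which bound $P_\alpha$ on $L_\alpha^p$ via the Schur test. Replacing the Schur test with the observation that after multiplying by the natural Gaussian weight the operator dominates as a convolution with $G(u)=e^{-\alpha|u|^2/2}$, so Young's inequality $\|G*g\|_p\le\|G\|_1\|g\|_p$ finishes the boundedness, is a genuine simplification: it avoids choosing a test function and tuning an auxiliary parameter, at the cost of requiring only a single bookkeeping step to carry the normalising constants. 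The kernel expansion plus the monomial orthogonality identity for the reproducing property is the standard and correct computation, and your derivation of $P_\alpha^2=P_\alpha$ from range-in-$F_\alpha^p$ plus $P_\alpha|_{F_\alpha^p}=\Id$ is the right way to conclude. One small caveat: your parenthetical justification of the density of polynomials in $F_\alpha^p$ is a bit compressed. For $f\in F_\alpha^p$ the Taylor partial sums of $f$ itself need not converge in norm; the usual argument first passes to the dilations $f_r(z)=f(rz)$ (which converge to $f$ in $F_\alpha^p$ as $r\to1^-$ and lie in the smaller space $F_{\alpha r^2}^p$, giving enough extra Gaussian room to make the Taylor truncations converge in norm). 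Since this is a well-known fact it is fair to cite it, but the hint ``Taylor-series truncation combined with the pointwise bound'' as written skips the dilation step that actually makes the argument go through.
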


The following duality results will also be of importance:
\begin{prop}\label{propzhu2}
Let $\frac{1}{p} + \frac{1}{q} = 1$. Then the following assertions hold under the usual dual pairing induced by the scalar product on $L_\alpha^2$:
\begin{enumerate}
\item $(L_\alpha^p)'\cong L_\alpha^q$,
\item for $g \in L_\alpha^q$ it holds
\[\| \langle \cdot, g\rangle_{\alpha} \|_{(L_\alpha^p)'} = \frac{2^n}{p^{n/p}q^{n/q}} \| g\|_{q,\alpha},\]
\item $(P_\alpha: L_\alpha^p \to L_\alpha^p)^\ast \cong (P_\alpha: L_\alpha^q \to L_\alpha^q)$.
\end{enumerate}
\end{prop}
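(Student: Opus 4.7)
The plan is to handle parts (i) and (ii) in a single reduction to the standard $L^p$--$L^q$ duality on unweighted Lebesgue space, and to derive (iii) by a direct Fubini computation once the identification in (i) is in place.

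For (i) and (ii), I would introduce the canonical isometric isomorphism $U_p : L_\alpha^p \to L^p(\mathbb{C}^n, dz)$ given by $(U_p f)(z) = (p\alpha/(2\pi))^{n/p} f(z) e^{-\alpha|z|^2/2}$; the isometric character is immediate from unpacking the definition of $\|\cdot\|_{p,\alpha}$. Under $U_p$ (and the analogous $U_q$), writing out the weight turns the pairing into
\begin{align*}
\langle f,g\rangle_\alpha = \Big(\frac{\alpha}{\pi}\Big)^n \int_{\mathbb{C}^n} f(z)\overline{g(z)} e^{-\alpha|z|^2}\, dz = \frac{2^n}{p^{n/p} q^{n/q}} \int_{\mathbb{C}^n} (U_p f)(z) \overline{(U_q g)(z)}\, dz,
\end{align*}
where the constant arises from combining $(\alpha/\pi)^n \cdot (p\alpha/(2\pi))^{-n/p} \cdot (q\alpha/(2\pi))^{-n/q}$ and collapsing the $\alpha$- and $\pi$-dependence via $1/p + 1/q = 1$. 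Since the right-hand side is a scalar multiple of the standard sesquilinear pairing realizing $(L^p(dz))' \cong L^q(dz)$ isometrically, transport through $U_p$ and $U_q$ yields (i), and the sharp Hölder inequality on $L^p(dz)$ gives the norm formula in (ii) with precisely the constant $2^n/(p^{n/p} q^{n/q})$.

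For (iii), once the concrete form of the duality is fixed, the assertion amounts to the symmetry identity $\langle P_\alpha f, g\rangle_\alpha = \langle f, P_\alpha g\rangle_\alpha$ for $f \in L_\alpha^p$ and $g \in L_\alpha^q$. Substituting the integral kernel from Proposition \ref{propzhu1} and using $\overline{e^{\alpha\langle z,w\rangle}} = e^{\alpha\langle w,z\rangle}$ reduces the verification to an interchange of the order of integration,
\begin{align*}
\iint e^{\alpha\langle z,w\rangle} f(w) \overline{g(z)}\, d\mu_\alpha(w)\, d\mu_\alpha(z) = \int f(w) \overline{(P_\alpha g)(w)}\, d\mu_\alpha(w).
\end{align*}
Absolute convergence of the double integral, which justifies Fubini, follows from the fact that the positive-kernel operator with $|e^{\alpha\langle z,w\rangle}| = e^{\alpha \operatorname{Re}\langle z,w\rangle}$ is again bounded between the relevant $L_\alpha^p$ spaces (a standard Schur-test argument for the Fock kernel) combined with Hölder's inequality and (ii).

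The main bookkeeping step is the constant in (ii): the three dimensional factors $(\alpha/\pi)^n$, $(p\alpha/(2\pi))^{-n/p}$, and $(q\alpha/(2\pi))^{-n/q}$ must be tracked correctly so that the $\alpha$- and $\pi$-dependence cancels and only $2^n/(p^{n/p} q^{n/q})$ survives. Conceptually the proposition is ordinary $L^p$ duality coupled with the symmetry of the Fock projection kernel, and I do not anticipate any genuine difficulty beyond this arithmetic.
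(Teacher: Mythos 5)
Your proof is correct and takes essentially the same route as the paper, which simply invokes the standard $L^p$--$L^q$ duality for (i)--(ii) and the Hermitian symmetry of the Fock kernel $\overline{e^{\alpha\langle z,w\rangle}} = e^{\alpha\langle w,z\rangle}$ for (iii). Your weight-stripping isometry $U_p$ just makes the reduction to the unweighted case explicit, and your constant computation correctly recovers $2^n/(p^{n/p}q^{n/q})$.
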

\begin{proof}
The standard proof of $L^p \cong L^q$ yields (i) and (ii). Using the symmetry of $P_\alpha$ (see Proposition \ref{propzhu1}), one immediately gets (iii). We refer to \cite{Janson_Peetre_Rochberg,Zhu} for details.
\end{proof}

Similarly, we have the following duality of Fock spaces:

\begin{prop}\label{dualityfock}
Let $\frac{1}{p} + \frac{1}{q} = 1$. Then the following assertions hold under the usual dual pairing induced by the scalar product on $F_\alpha^2$:
\begin{enumerate}
\item $(F_\alpha^p)' \cong F_\alpha^q$,
\item for $g \in F_\alpha^q$ it holds 
\[ \| g\|_{q,\alpha} \leq \| \langle \cdot , g\rangle_\alpha\|_{(F_\alpha^p)'} \leq \frac{2^n}{p^{n/p}q^{n/q}} \| g\|_{q,\alpha}. \]
\end{enumerate}
\end{prop}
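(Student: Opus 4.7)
My plan is to deduce (i) and the upper bound in (ii) directly from Proposition~\ref{propzhu2} via restriction, and then to obtain both the surjectivity of the duality map and the lower bound by combining the Hahn--Banach theorem with the self-adjointness of $P_\alpha$ (Proposition~\ref{propzhu2}(iii)). The upper bound in (ii) will be the easiest ingredient: any $g \in F_\alpha^q \subseteq L_\alpha^q$ induces by Proposition~\ref{propzhu2}(i)--(ii) a bounded functional on $L_\alpha^p$ of norm $\frac{2^n}{p^{n/p}q^{n/q}}\|g\|_{q,\alpha}$, and restricting to the closed subspace $F_\alpha^p$ can only decrease this norm.

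For the existence of a representative $g \in F_\alpha^q$ for a given $\Lambda \in (F_\alpha^p)'$, I would extend $\Lambda$ by Hahn--Banach to some $\tilde\Lambda \in (L_\alpha^p)'$ with $\|\tilde\Lambda\| = \|\Lambda\|$ and write $\tilde\Lambda = \langle \cdot, h\rangle_\alpha$ for some $h \in L_\alpha^q$ via Proposition~\ref{propzhu2}(i). Setting $g := P_\alpha h$ (which lies in $F_\alpha^q$), the self-adjointness of $P_\alpha$ together with $P_\alpha f = f$ for $f \in F_\alpha^p$ gives
\[ \Lambda(f) = \tilde\Lambda(f) = \langle f, h\rangle_\alpha = \langle P_\alpha f, h\rangle_\alpha = \langle f, P_\alpha h\rangle_\alpha = \langle f, g\rangle_\alpha \]
for every $f \in F_\alpha^p$, yielding surjectivity. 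Injectivity of the map $g \mapsto \langle \cdot, g\rangle_\alpha$ is then clear by testing against the reproducing kernel $K_w^\alpha(z) = e^{\alpha\langle z, w\rangle} \in F_\alpha^p$, which gives $\langle K_w^\alpha, g\rangle_\alpha = \overline{g(w)}$ and thus recovers $g$ pointwise.

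The subtle step, and the main obstacle, is the lower bound in (ii). With the representation $g = P_\alpha h$ and $\|h\|_{q,\alpha} = \frac{p^{n/p}q^{n/q}}{2^n}\|\Lambda\|$ obtained from Proposition~\ref{propzhu2}(ii), one has
\[ \|g\|_{q,\alpha} = \|P_\alpha h\|_{q,\alpha} \leq \|P_\alpha\|_{\mathcal{L}(L_\alpha^q)} \cdot \frac{p^{n/p}q^{n/q}}{2^n}\|\Lambda\|, \]
so that the desired estimate $\|g\|_{q,\alpha} \leq \|\Lambda\|$ is equivalent to the sharp projection bound $\|P_\alpha\|_{\mathcal{L}(L_\alpha^q)} \leq \frac{2^n}{p^{n/p}q^{n/q}}$. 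This is a well-known result on the Fock projection that can be proved by applying Schur's test to the integral kernel $e^{\alpha\langle z, w\rangle}$ with a suitable Gaussian weight $e^{-\alpha|z|^2/2}$, in essentially the same spirit as the weighted Hölder inequality implicit in Proposition~\ref{propzhu2}(ii); accordingly I expect a brief reference to the literature (e.g.~Zhu's monograph or \cite{Janson_Peetre_Rochberg}) to suffice.
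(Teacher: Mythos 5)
Your argument is correct in outline, and it takes a genuinely different route from the paper's: the paper disposes of (i) and (ii) entirely by citation — (i) is declared ``standard'' with a pointer to \cite{Janson_Peetre_Rochberg,Zhu}, and (ii) is delegated to \cite[Theorem 1.2]{Gryc_Kemp} — whereas you reconstruct the duality from Proposition~\ref{propzhu2}. Your isomorphism argument (Hahn--Banach extension, $\tilde\Lambda=\langle\cdot,h\rangle_\alpha$, $g:=P_\alpha h$, identification via the self-adjointness of $P_\alpha$ and the reproducing kernel) and your restriction argument for the upper bound in (ii) are both sound. Your reduction of the lower bound to the operator-norm estimate $\|P_\alpha\|_{\mathcal L(L_\alpha^q)}\le \tfrac{2^n}{p^{n/p}q^{n/q}}$ is also a clean and valid step, and the claimed constant is indeed the exact norm of the Fock projection: testing with the Gaussian $f(w)=e^{-a|w|^2}$, $a=\tfrac{\alpha(2-p)}{2(p-1)}$, one gets $P_\alpha f\equiv 2/q$ and $\|P_\alpha f\|_{p,\alpha}/\|f\|_{p,\alpha}=\tfrac{2^n}{p^{n/p}q^{n/q}}$ exactly, and that these Gaussians are maximizers is precisely the content of the Gryc--Kemp analysis.

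The one place where your sketch is too optimistic is the final sentence: the sharp projection norm does \emph{not} follow from an ordinary Schur test with a Gaussian weight. Taking $h(z)=e^{b|z|^2}$ (which is the only Gaussian weight for which the two Schur conditions can hold, forcing $bq=\alpha/2$), both Schur constants equal $2^n$, giving only $\|P_\alpha\|_{\mathcal L(L_\alpha^p)}\le 2^n$; this matches the $L^1$-norm of the modulus kernel $(\alpha/\pi)^n e^{-\alpha|z-w|^2/2}$ and ignores the oscillatory cancellation that is responsible for the improvement down to $\tfrac{2^n}{p^{n/p}q^{n/q}}$. The sharp constant needs either the Gaussian-maximizer argument of Gryc--Kemp or a Lieb-type theorem, not Schur's test. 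So in practice your route re-enters \cite{Gryc_Kemp} through the side door of the projection-norm estimate, which is why the authors simply cite that paper directly. The reduction is nevertheless instructive: it cleanly isolates where the nontrivial input lies, and it establishes the interesting fact that the lower bound in (ii), together with the exact $L_\alpha^p$-duality constant, is equivalent (in the easy direction) to the sharp Fock projection norm.
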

\begin{proof}
(i) is again standard (cf. \cite{Janson_Peetre_Rochberg,Zhu}), (ii) is \cite[Theorem 1.2]{Gryc_Kemp}. 
\end{proof}

Note that in both cases the isomorphism is not isometric for $p \neq 2$. However, these quasi-isometries still allow for the usual adjoint arguments, which we will use occasionally.

\begin{prop}\label{prop4}
Let $(U_j)_{j \in \mathbb{N}}$ be a sequence of measurable subsets of $\mathbb{C}^n$ such that every $z \in \mathbb{C}^n$ belongs to at most $N$ of the sets $U_j$ for some $N \in \mathbb{N}$. Further, let $(f_j)_{j \in \mathbb{N}}$ be a sequence of measurable functions $f_j: \mathbb{C}^n \to \mathbb{C}$ such that $\operatorname{supp} f_j \subseteq U_j$ and $|f_j(z)|\leq 1$ for all $z \in \mathbb{C}^n$. Then, for every $g \in L_a^p$
\begin{align*}
\sum_{j=1}^\infty \int_{\mathbb{C}^n} |f_j(z)g(z)|^p d\mu_{p\alpha/2} (z) \leq N \| g\|_{p,\alpha}^p.
\end{align*}
In particular,
\begin{align*}
\sum_{j=1}^\infty \| M_{f_j} g\|_{p,\alpha}^p \leq N \| g\|_{p,\alpha}^p.
\end{align*}
\end{prop}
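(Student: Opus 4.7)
The plan is to reduce the statement to a pointwise bound followed by an application of Tonelli's theorem; the finite-overlap hypothesis is simply a reformulation of a pointwise estimate on the sum of indicator functions.

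First I would observe that the two hypotheses $\operatorname{supp} f_j \subseteq U_j$ and $|f_j(z)| \leq 1$ combine to give the pointwise bound
\[
|f_j(z)|^p \leq \chi_{U_j}(z) \quad \text{for every } z \in \mathbb{C}^n,
\]
while the assumption that each $z$ lies in at most $N$ of the sets $U_j$ can be rewritten as
\[
\sum_{j=1}^\infty \chi_{U_j}(z) \leq N \quad \text{for every } z \in \mathbb{C}^n.
\]
Multiplying the first inequality by $|g(z)|^p$ and summing over $j$ yields
\[
\sum_{j=1}^\infty |f_j(z) g(z)|^p \leq \left(\sum_{j=1}^\infty \chi_{U_j}(z)\right) |g(z)|^p \leq N |g(z)|^p.
\]

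Next I would integrate both sides against $d\mu_{p\alpha/2}$ and use Tonelli's theorem (all summands are nonnegative and measurable) to interchange sum and integral, obtaining
\[
\sum_{j=1}^\infty \int_{\mathbb{C}^n} |f_j(z) g(z)|^p \, d\mu_{p\alpha/2}(z) \leq N \int_{\mathbb{C}^n} |g(z)|^p \, d\mu_{p\alpha/2}(z) = N \|g\|_{p,\alpha}^p,
\]
which is the desired inequality. The ``in particular'' part is then immediate from the definition $\|M_{f_j} g\|_{p,\alpha}^p = \int_{\mathbb{C}^n} |f_j(z) g(z)|^p \, d\mu_{p\alpha/2}(z)$.

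There is really no substantial obstacle here: the statement is a direct consequence of the two pointwise bounds above and the monotone/Tonelli interchange. The only thing worth being careful about is ensuring the swap of sum and integral is justified, and for nonnegative integrands this is automatic.
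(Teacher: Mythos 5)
Your proof is correct, and it is the standard argument: the paper simply defers to the identical statement in the unit ball setting (\cite[Proposition 5]{Hagger}), where the same pointwise bound $\sum_j |f_j|^p \leq \sum_j \chi_{U_j} \leq N$ followed by Tonelli is used. There is nothing further to add.
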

\begin{proof}
As in \cite[Proposition 5]{Hagger}.
\end{proof}
The following two results are well-known and are provided here for completeness:
\begin{lmm}\label{lemmaboundedness}
Let $\alpha,\beta,\gamma > 0$. The function
\begin{align*}
(z,w) \mapsto e^{\alpha \langle z,w\rangle - \beta |z|^2 - \gamma |w|^2}
\end{align*}
is bounded on $\mathbb{C}^n \times \mathbb{C}^n$ if and only if $4\beta \gamma - \alpha^2 \geq 0$.
\end{lmm}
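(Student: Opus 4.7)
The plan is to reduce the statement to a question about a real quadratic form in two variables. Since $\langle z,w\rangle$ is complex, one has
\[
\bigl|e^{\alpha \langle z,w\rangle - \beta|z|^2 - \gamma|w|^2}\bigr| = e^{\alpha \operatorname{Re}\langle z,w\rangle - \beta|z|^2 - \gamma|w|^2},
\]
so boundedness of the function is equivalent to boundedness above of the real-valued exponent $\Phi(z,w) := \alpha \operatorname{Re}\langle z,w\rangle - \beta|z|^2 - \gamma|w|^2$ on $\mathbb{C}^n \times \mathbb{C}^n$.

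First I would use the Cauchy--Schwarz inequality to observe that $\operatorname{Re}\langle z,w\rangle \leq |z||w|$, with equality attained whenever $z = ru$ and $w = su$ for some unit vector $u \in \mathbb{C}^n$ and scalars $r,s \geq 0$. Hence boundedness above of $\Phi$ is equivalent to boundedness above of the two-variable function $q(r,s) := \alpha r s - \beta r^2 - \gamma s^2$ on $[0,\infty)^2$: the pointwise estimate $\Phi(z,w) \leq q(|z|,|w|)$ gives one direction, and the explicit test vectors $(z,w) = (ru, su)$, which realise $\Phi(z,w) = q(r,s)$, give the converse.

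Next I would analyse $q$. Since $q$ is homogeneous of degree two with $q(0,0) = 0$, it is bounded above on $[0,\infty)^2$ if and only if $q(r,s) \leq 0$ everywhere there; otherwise, scaling $(r,s) \mapsto (tr, ts)$ with $t \to \infty$ would produce arbitrarily large positive values. Setting $s = tr$ for $r > 0$, non-positivity of $q$ translates into $\gamma t^2 - \alpha t + \beta \geq 0$ for all $t \geq 0$. Because $\alpha, \beta, \gamma > 0$, any real roots of this quadratic are automatically positive (their product is $\beta/\gamma > 0$ and their sum is $\alpha/\gamma > 0$), so non-negativity on $[0,\infty)$ is equivalent to non-positive discriminant, that is, $\alpha^2 - 4\beta\gamma \leq 0$.

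Combining these two reductions yields the claim. The only subtle point is the borderline case $4\beta\gamma - \alpha^2 = 0$, where $-q$ remains positive semi-definite but vanishes along the ray $s = \frac{\alpha}{2\gamma}\, r$; along this ray $\Phi$ attains its supremum $0$, so the function is bounded by $1$. I do not anticipate any real obstacle: the whole argument amounts to passing from the complex exponential to its modulus and then doing calculus with a two-variable quadratic form.
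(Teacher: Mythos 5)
Your proof is correct and follows essentially the same route as the paper: take the modulus, use Cauchy--Schwarz to reduce boundedness of the exponent to boundedness of the real quadratic $\alpha rs - \beta r^2 - \gamma s^2$, and analyse that form. The only difference is that the paper simply cites this boundedness as a ``well-known fact'' and verifies the converse with the explicit test $w = \sqrt{\beta/\gamma}\,z$, whereas you spell out the discriminant/homogeneity argument; these are minor variations of the same idea.
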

\begin{proof}
For
\begin{align*}
|e^{\alpha \langle z,w\rangle - \beta |z|^2 - \gamma |w|^2}| = e^{\alpha \operatorname{Re}\langle z,w\rangle - \beta |z|^2 - \gamma |w|^2}
\end{align*}
to be bounded it suffices to show that $\alpha \operatorname{Re}\langle z,w\rangle - \beta |z|^2 - \gamma |w|^2$ is bounded from above. Since
\begin{align*}
\alpha \operatorname{Re}\langle z,w\rangle - \beta |z|^2 - \gamma |w|^2 &\leq \alpha |z| |w| - \beta |z|^2 - \gamma |w|^2
\end{align*}
and the right-hand side of this inequality is just the polynomial $p(x,y) = \alpha xy - \beta x^2 - \gamma y^2$ evaluated at $x = |z|, y=|w|$, the boundedness follows from the well-known fact that $p(x,y)$ is bounded from above for $4\beta \gamma -\alpha^2 \geq 0$.

Conversely, if $4\beta \gamma - \alpha^2 < 0$, set $w = \sqrt{\frac{\beta}{\gamma}}z$ to obtain
\begin{align*}
e^{\alpha \langle z,w\rangle - \beta |z|^2 - \gamma |w|^2} = e^{|z|^2 ( \alpha \sqrt{\frac{\beta}{\gamma}} - 2\beta)}.
\end{align*}
Since $\alpha > 2\sqrt{\beta \gamma}$, this function is unbounded.
\end{proof}
\begin{prop}\label{cpt}
Let $D \subset \mathbb{C}^n$ be compact. Then the operators $P_\alpha M_{\chi_D}$ and $M_{\chi_D} P_\alpha: L_\alpha^p \to L_\alpha^p$ are compact operators.
\end{prop}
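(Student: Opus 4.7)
The plan is to treat both operators via pointwise estimates on their integral representations, combined with Montel's theorem and dominated convergence.

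For $P_\alpha M_{\chi_D}$ the first step is to establish a pointwise estimate of the form
\[
|(P_\alpha M_{\chi_D} f)(z)| \leq C(D)\, e^{\alpha R_D |z|}\, \|f\|_{p,\alpha},
\]
where $R_D := \sup_{w \in D}|w| < \infty$. This is obtained from the integral formula of Proposition \ref{propzhu1} by bounding $\operatorname{Re}\langle z,w\rangle \leq |z| R_D$ on the compact set $D$ and applying H\"older's inequality there. The decisive feature is that the exponent in $|z|$ is only linear, which is what the compact support of $\chi_D$ buys us. Given a bounded sequence $(f_n)$ in $L_\alpha^p$, the images $g_n := P_\alpha M_{\chi_D} f_n$ are therefore entire functions that are uniformly bounded on every compact subset of $\mathbb{C}^n$, so Montel's theorem provides a subsequence converging locally uniformly to an entire limit $g$, and the same pointwise bound extends to $g$.

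To upgrade locally uniform convergence to $L_\alpha^p$-convergence I would split $\|g_n - g\|_{p,\alpha}^p$ into integrals over $\{|z|\leq R\}$ and its complement. The ball piece tends to $0$ by uniform convergence combined with the finiteness of $\mu_{p\alpha/2}$ on compact sets. The tail piece is dominated by a constant multiple of $e^{p\alpha R_D|z|}\, e^{-p\alpha|z|^2/2}$; since the quadratic exponent dominates the linear one, this is Lebesgue-integrable on $\mathbb{C}^n$ and its mass on $\{|z|>R\}$ vanishes as $R \to \infty$, uniformly in $n$. This yields compactness of $P_\alpha M_{\chi_D}$.

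For $M_{\chi_D} P_\alpha$ the cleanest route is a duality argument: by Proposition \ref{propzhu2}(iii) together with the fact that $\chi_D$ is real-valued, the Banach space adjoint of $P_\alpha M_{\chi_D} : L_\alpha^p \to L_\alpha^p$ is, under the identification from Proposition \ref{propzhu2}(i)--(ii), the operator $M_{\chi_D} P_\alpha : L_\alpha^q \to L_\alpha^q$. Since the first part of the argument applies for every $p \in (1,\infty)$ and compactness is preserved by adjoints, compactness of $M_{\chi_D} P_\alpha$ follows on $L_\alpha^q$ for all $q \in (1,\infty)$. The main obstacle is securing the subexponential pointwise bound in the first step: without the compact-support cutoff one would have only the standard Fock estimate $|P_\alpha f(z)| \lesssim e^{\alpha|z|^2/2} \|f\|_{p,\alpha}$, which is exactly critical against $d\mu_{p\alpha/2}$ and too weak to furnish an integrable tail domination.
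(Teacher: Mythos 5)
Your argument is correct, and it is genuinely different from the one in the paper. The paper proves both compactness claims by a direct integral-kernel computation: it verifies the Hille--Tamarkin condition, i.e.\ that the kernel of $P_\alpha M_{\chi_D}$ (and symmetrically of $M_{\chi_D}P_\alpha$) lies in the appropriate mixed $L^p(L^q)$ space with respect to $\mu_{p\alpha/2}$, using Lemma~\ref{lemmaboundedness} to control the Gaussian exponent. Your route instead exploits that the range of $P_\alpha M_{\chi_D}$ consists of entire functions satisfying the subexponential bound $|(P_\alpha M_{\chi_D}f)(z)|\le C(D)e^{\alpha R_D|z|}\|f\|_{p,\alpha}$ (obtained by H\"older over $D$), so Montel furnishes a locally uniformly convergent subsequence, and the dominated tail $e^{p\alpha R_D|z|}e^{-p\alpha|z|^2/2}$ upgrades this to $L_\alpha^p$-convergence; the operator $M_{\chi_D}P_\alpha$ is then handled by Schauder's theorem together with the self-duality $P_\alpha^\ast=P_\alpha$ and $M_{\chi_D}^\ast=M_{\chi_D}$ from Proposition~\ref{propzhu2}. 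Both proofs are sound. The paper's Hille--Tamarkin computation is more mechanical and treats both operators symmetrically; your Montel argument is more structural in that it isolates exactly where the compact support enters (it degrades the critical Fock growth $e^{\alpha|z|^2/2}$ to a linear exponent), though it requires the first half to hold for all $p$ before the adjoint step closes the second half. A minor remark: a duality detour for $M_{\chi_D}P_\alpha$ is avoidable, since $M_{\chi_D}P_\alpha f=\chi_D\cdot P_\alpha f$ and the entire functions $P_\alpha f_n$ are already uniformly bounded on a neighbourhood of the compact set $D$ by the standard Fock estimate, so Montel applies directly there and $L_\alpha^p$-convergence on $D$ is immediate; but the adjoint argument you give is equally valid.
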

\begin{proof}
It is
\begin{align*}
P_\alpha M_{\chi_D} (f)(z) &= \Big (\frac{\alpha}{\pi} \Big )^n \int_{\mathbb{C}^n} e^{\alpha \langle z, w\rangle} \chi_D(w) e^{-\alpha |w|^2} f(w)dw\\
&= \Big ( \frac{2}{p} \Big )^n \int_{\mathbb{C}^n} e^{\alpha \langle z,w\rangle - \frac{2\alpha - p\alpha}{2}|w|^2} \chi_D(w)f(w) d\mu_{p\alpha/2}(w).
\end{align*}
By the Hille-Tamarkin theorem \cite[Theorem 41.6]{Zaanen} it suffices to check (recall: $L_\alpha^p = L^p(\mathbb{C}^n, \mu_{p\alpha/2})$) that
\begin{align*}
\int_{\mathbb{C}^n} \Big ( \int_{\mathbb{C}^n} |e^{\alpha \langle z,w\rangle - \frac{2\alpha - p\alpha}{2}|w|^2}|^q \chi_D(w) d\mu_{p\alpha/2}(z)\Big )^{\frac{p}{q}} d\mu_{p\alpha/2}(w) < \infty,
\end{align*}
where $\frac{1}{p} + \frac{1}{q} = 1$. A direct computation shows
\begin{align*}
\int_{\mathbb{C}^n} &\left( \int_{\mathbb{C}^n} |e^{\alpha \langle z,w\rangle - \frac{2\alpha - p\alpha}{2}|w|^2}|^q \chi_D(w) d\mu_{p\alpha/2}(z)\right)^{\frac{p}{q}} d\mu_{p\alpha/2}(w)\\
&= \Big (\frac{p\alpha}{2\pi}\Big )^{np} \int_{D} \left( \int_{\mathbb{C}^n} e^{q\alpha \operatorname{Re}\langle z,w\rangle - \frac{2q\alpha - pq\alpha}{2}|w|^2 -\frac{p\alpha}{2}|z|^2} dz \right)^{\frac{p}{q}} e^{-\frac{p\alpha}{2}|w|^2} dw \\
&= \Big (\frac{p\alpha}{2\pi}\Big )^{np}\int_D \left( \int_{\mathbb{C}^n} e^{q\alpha \operatorname{Re}\langle z,w\rangle - \frac{p\alpha}{4}|z|^2 - \frac{q^2\alpha}{p}|w|^2} e^{-\frac{p\alpha}{4}|z|^2} dz\right.\\
&\left. \quad \quad \quad \quad \quad \quad \quad \quad \quad \quad \cdot e^{\frac{-2pq\alpha + p^2q\alpha + 2q^2\alpha}{2p}|w|^2} \right)^{\frac{p}{q}} e^{-\frac{p\alpha}{2}|w|^2} dw\\
&= \Big (\frac{p\alpha}{2\pi}\Big )^{np}\int_D \left( \int_{\mathbb{C}^n} e^{q\alpha \operatorname{Re}\langle z,w\rangle - \frac{p\alpha}{4}|z|^2 - \frac{q^2\alpha}{p}|w|^2} e^{-\frac{p\alpha}{4}|z|^2} dz\right)^{\frac{p}{q}}\\
&\quad \quad \quad \quad \quad \quad \quad \quad \quad \quad \cdot e^{\frac{-3p\alpha + p^2\alpha + 2q\alpha}{2}|w|^2} dw\\\\
&\leq \Big (\frac{p\alpha}{2\pi}\Big )^{np} \int_D \left(\int_{\mathbb{C}^n} C e^{-\frac{p\alpha}{4}|z|^2} dz \right)^{\frac{q}{p}} e^{\frac{-3p\alpha + p^2\alpha + 2q\alpha}{2}|w|^2} dw\\
&< \infty,
\end{align*}
where $C$ is the bound from Lemma \ref{lemmaboundedness} with $\gamma = \frac{q^2\alpha}{p}$. The proof for $M_{\chi_D} P_\alpha$ is similar.
\end{proof}
The next lemma is essentially a sloppy version of Jensen's inequality. Since we won't need the stronger form and the notation will be more convenient with this variant, we will just mention this weak estimate. It can easily be shown and will frequently be used.
\begin{lmm}\label{lemmasum}
For $k \in \mathbb{N}$, $p \in (1, \infty)$ and $x_1, \dots, x_k \geq 0$ it is
\begin{align*}
\Big ( \sum_{j=1}^k x_j \Big )^p \leq k^p \sum_{j=1}^k x_j^p.
\end{align*}
\end{lmm}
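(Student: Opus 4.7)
The plan is to use one of two elementary routes, both of which yield even sharper bounds than stated. The first route is via the convexity of $t\mapsto t^p$ on $[0,\infty)$ for $p>1$: by Jensen's inequality applied to the uniform measure on $\{1,\dots,k\}$,
\[
\left(\frac{1}{k}\sum_{j=1}^k x_j\right)^p \leq \frac{1}{k}\sum_{j=1}^k x_j^p,
\]
which after multiplying both sides by $k^p$ gives $\left(\sum_{j=1}^k x_j\right)^p \leq k^{p-1}\sum_{j=1}^k x_j^p$. Since $k^{p-1}\leq k^p$, this immediately implies the claim. Equivalently, one can apply Hölder's inequality to $\sum_{j=1}^k 1\cdot x_j$ with exponents $p$ and $q=p/(p-1)$ to get the same sharper estimate.

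Alternatively, and perhaps more in line with the authors' comment about sloppiness, one can avoid Jensen entirely and just observe that $\sum_{j=1}^k x_j \leq k\max_{1\leq j\leq k} x_j$; raising to the $p$-th power and bounding $\max_j x_j^p$ by $\sum_j x_j^p$ yields the inequality as stated. No step here presents any real obstacle, which is exactly why the authors choose to record this crude form: the factor $k^p$ will be notationally convenient in later applications (for instance when splitting sums of $\|M_{f_j}g\|_{p,\alpha}^p$ type terms), and nothing is lost by not tracking the sharper exponent $p-1$. The whole argument fits in two or three lines of display math.
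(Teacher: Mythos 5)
Your proof is correct; in fact the paper omits the argument entirely, remarking only that the lemma is ``essentially a sloppy version of Jensen's inequality'' and ``can easily be shown,'' which is precisely the Jensen route you present first. Your observations that the sharper constant $k^{p-1}$ holds and that the trivial bound via $\max_j x_j$ also suffices are both accurate, and your comment on why the authors settle for the crude factor $k^p$ matches their stated reasoning.
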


\section{Band-dominated operators}

The aim of this section is to introduce band-dominated operators in $\mathcal{L}(L_\alpha^p)$ and to provide some basic properties of them. Most of the proofs are similar to those in \cite{Hagger} and use techniques adapted from the sequence space case (see \cite{Lindner, Rabinovich_Roch_Silbermann_2} and the references therein).

\begin{defn} \begin{enumerate}
\item An operator $A \in \mathcal{L}(L_\alpha^p)$ is called a band operator if there is a positive real number $\omega$ such that $M_f A M_g = 0$ for all $f, g \in L^\infty(\mathbb{C}^n)$ with $\operatorname{dist}(\operatorname{supp} f, \operatorname{supp} g) > \omega$. The infimum over all such $\omega$ will be denoted by $\omega(A)$ and is called the band width of $A$.
\item An operator $A \in \mathcal{L}(L_\alpha^p)$ is called a band-dominated operator if it is the norm limit of a sequence of band operators. The set of band-dominated operators on $L_\alpha^p$ will be denoted by $\BDO_\alpha^p$.
\end{enumerate}
\end{defn}

Denote by $|z|_\infty$ the induced sup-norm from $\mathbb{R}^{2n} \cong \mathbb{C}^n$ and $\operatorname{dist}_\infty (z,B) = \inf\{ |z - w|_\infty; w \in B\}$ for $z \in \mathbb{C}^n$ and $B \subseteq \mathbb{C}^n$. Set
\begin{align*}
\zeta = \{ [-3,3)^{2n} + \sigma \subset \mathbb{R}^{2n}; \sigma \in 6\mathbb{Z}^{2n} \}
\end{align*}
and enumerate $\zeta$ as $\zeta = \{ B_j\}_{j=1}^\infty$ such that it is $0 \in B_1$.
Furthermore, we denote
\begin{align*}
\Omega_k(B_j) := \{ z \in \mathbb{C}^n; \operatorname{dist}_\infty (z,B_j) \leq k\}
\end{align*}
for $k=1,2,3$ and $j \in \mathbb{N}$.
\begin{prop}[{\cite[Lemma 3.1]{Bauer_Isralowitz}}]\label{prop6}
The $B_j$ satisfy
\begin{enumerate}
\item $B_j \cap B_k = \varnothing$ for $j \neq k$;
\item every $z \in \mathbb{C}^n$ belongs to at most $2^{2n}$ of the sets $\Omega_1(B_j)$ and at most $4^{2n}$ of the sets $\Omega_3(B_j)$;
\item $\operatorname{diam}(B_j) = 6\sqrt{2n}$, where $\operatorname{diam}(B_j)$ denotes the Euclidean diameter of $B_j$.
\end{enumerate}
\end{prop}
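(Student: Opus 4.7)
The plan is to prove each of the three items by elementary geometric/combinatorial arguments. All three reduce to very concrete statements about cubes and a rescaled integer lattice.

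For (i), I would reduce to the one-dimensional fact that $[-3,3)$ is a fundamental domain for the action of $6\mathbb{Z}$ on $\mathbb{R}$: the translates $[-3,3) + 6k$ for $k \in \mathbb{Z}$ partition $\mathbb{R}$. Taking the $2n$-fold product, the sets $[-3,3)^{2n} + \sigma$ for $\sigma \in 6\mathbb{Z}^{2n}$ partition $\mathbb{R}^{2n}$, so in particular pairwise different $B_j$'s are disjoint.

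For (ii), the key observation I would establish first is the identity
\begin{align*}
\Omega_k(B_j) = [-3-k, 3+k]^{2n} + \sigma_j.
\end{align*}
This follows because for any half-open interval its closure gives the same distance function, and because $\operatorname{dist}_\infty$ is the maximum over coordinates of the one-dimensional distances. Once this is in place, the question of how many $\Omega_k(B_j)$ contain a fixed $z \in \mathbb{C}^n$ becomes: how many $\sigma \in 6\mathbb{Z}^{2n}$ satisfy $|z_\ell - \sigma_\ell| \leq 3+k$ for each coordinate $\ell = 1, \dots, 2n$? In each coordinate this is a one-dimensional count of lattice points in an interval of length $6 + 2k$, yielding at most $\lfloor (6+2k)/6 \rfloor + 1$ choices; the total number of admissible $\sigma$ is the product of the one-dimensional counts. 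For $k=1$ this gives at most $2$ per coordinate and $2^{2n}$ in total, and for $k=3$ at most $3$ per coordinate, so certainly $4^{2n}$ in total.

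For (iii), I would just compute the Euclidean diameter of the half-open cube $[-3,3)^{2n}$ directly. Since translation does not change the diameter, the diameter of $B_j$ equals that of $[-3,3)^{2n}$, and the supremum of $|x-y|$ over $x,y$ in this cube is the supremum over each coordinate difference squared, summed and square-rooted, giving $\sqrt{2n \cdot 36} = 6\sqrt{2n}$.

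None of the steps poses a real obstacle; the only point requiring slight care is verifying the identity $\Omega_k(B_j) = [-3-k, 3+k]^{2n} + \sigma_j$ despite the half-open nature of $B_j$, which is handled by noting that $\operatorname{dist}_\infty(z, B_j) = \operatorname{dist}_\infty(z, \overline{B_j})$.
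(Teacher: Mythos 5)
Your proof is correct and complete; all three items reduce exactly as you describe to elementary one-dimensional facts about the rescaled lattice $6\mathbb{Z}$ and the cube $[-3,3)$, and the key identity $\Omega_k(B_j) = [-3-k,3+k]^{2n} + \sigma_j$ (valid since $\operatorname{dist}_\infty(z,B_j) = \operatorname{dist}_\infty(z,\overline{B_j})$ and the $\sup$-distance to a product of intervals is the maximum of coordinatewise distances) is precisely the right observation to make the counting in (ii) transparent. Note that the paper does not prove this proposition itself but simply cites Lemma 3.1 of Bauer and Isralowitz, so there is no in-paper argument to compare against; your self-contained elementary argument is a fine substitute, and in fact your count of at most $3^{2n}$ cells for $\Omega_3$ slightly sharpens the stated bound of $4^{2n}$.
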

We will now construct a sequence of auxiliary functions, which will give a partition of unity of $\mathbb{C}^n$ with particularly nice properties. Define the function $\phi: \mathbb{R} \to [0,1]$ as in Figure 1. 
\begin{figure}
\begin{center}
\begin{tikzpicture}
\draw[-] (-2.5,0) to (2.5,0);
\draw[dashed, gray] (-2.5,1) to (2.5,1);
\draw[-] (-2,0) to (-1,1);
\draw[-] (-1,1) to (1,1);
\draw[-] (1,1) to (2,0);
\draw[-] (2,2pt) to (2,-2pt);
\draw[-] (1,2pt) to (1,-2pt);
\draw[-] (-1,2pt) to (-1,-2pt);
\draw[-] (-2,2pt) to (-2,-2pt);
\draw[-] (0,2pt) to (0,-2pt);
\node[anchor=east] at (-2.5,1) {1};
\node[anchor=east] at (-2.5,0) {0};
\node[anchor=north] at (-2,-1pt) {-4};
\node[anchor=north] at (-1,-1pt) {-2};
\node[anchor=north] at (0,-1pt) {0};
\node[anchor=north] at (1,-1pt) {2};
\node[anchor=north] at (2,-1pt) {4};
\end{tikzpicture}
\end{center}
\caption{The function $\phi$}
\end{figure}
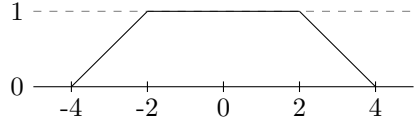
Further, define $\widetilde{\varphi}_{0}: \mathbb{C}^n \simeq \mathbb{R}^{2n} \to [0,1]$ by
\begin{align*}
\widetilde{\varphi}_{0}(x_1, \dots, x_{2n}) = \phi(x_1) \cdot \phi(x_2) \cdot \hdots \cdot \phi(x_{2n}).
\end{align*}
Now, for every $\sigma = (\sigma_1, \dots, \sigma_{2n}) \in 6\mathbb{Z}^{2n}$ set
\begin{align*}
\widetilde{\varphi}_\sigma (x_1, \dots, x_{2n}) = \widetilde{\varphi}_0(x_1 - \sigma_1, \dots, x_{2n} - \sigma_{2n}).
\end{align*}
Let $(\sigma_j)_{j \in \mathbb{N}}$ be the enumeration of $6\mathbb{Z}^{2n}$ which coincides with the enumeration of $\zeta$, i.e. $\sigma_j \in B_j$ for all $j \in \mathbb{N}$. We set $\varphi_j = \widetilde{\varphi}_{\sigma_j}$. It is easily seen that the $\varphi_j$ fulfill the following properties:
\begin{enumerate}
\item $\operatorname{supp} \varphi_j = \Omega_1 (B_j)$ for all $j$,
\item $\displaystyle \sum_{j=1}^\infty \varphi_j(z) = 1$ for all $z \in \mathbb{C}^n$,
\item the sequence $(\varphi_j)_{j \in \mathbb{N}}$ is uniformly equicontinuous (every function is even Lipschitz continuous with Lipschitz constant at most $\frac{1}{2} \cdot 2n = n$).
\end{enumerate}
In a similar way we can construct another sequence $(\psi_j)_{j \in \mathbb{N}}$ of functions, now such that the functions are non-negative and uniformly equicontinuous mappings from $\mathbb{C}^n$ to $[0,1]$ with
\begin{enumerate}
\item $\psi_j(z) = 1$ for all $z \in \Omega_2(B_j)$ for all $j \in \mathbb{N}$,
\item $\operatorname{supp} \psi_j = \Omega_3(B_j)$ for all $j \in \mathbb{N}$.
\end{enumerate}
For each $t \in (0,1)$ and each $j \in \mathbb{N}$ we define the functions $\varphi_{j,t}(z) := \varphi_j(tz)$ and $\psi_{j,t}(z) = \psi_j(tz)$. The following proposition gives a few characterisations of band-dominated operators:
\begin{prop}\label{prop8}
Let $A \in \mathcal{L}(L_\alpha^p)$. The following are equivalent:
\begin{enumerate}
\item $A$ is band-dominated;
\item $\displaystyle \lim_{t \to 0} \sup_{\| f\| = 1} \sum_{j=1}^\infty \| M_{\varphi_{j,t}} AM_{1-\psi_{j,t}} f\|^p = 0$;
\item $\displaystyle \lim_{t \to 0} \Big \| \sum_{j=1}^\infty M_{\varphi_{j,t}} AM_{1-\psi_{j,t}} \Big \| = 0$, where the convergence of the operator sum should be understood as strong convergence;
\item $\displaystyle \lim_{t \to 0} \sup_{\| f\| = 1} \sum_{j=1}^\infty \| [A, M_{\varphi_{j,t}}] f\|^p = 0$, where $[A,M_{\varphi_{j,t}}] = AM_{\varphi_{j,t}} - M_{\varphi_{j,t}}A$ is the commutator.
\end{enumerate}
\end{prop}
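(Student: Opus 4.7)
My approach is the cyclic chain $\mathrm{(i)} \Rightarrow \mathrm{(ii)} \Rightarrow \mathrm{(iii)} \Rightarrow \mathrm{(i)}$ together with the equivalence $\mathrm{(iv)} \Leftrightarrow \mathrm{(i)}$. Everything rests on two universal boundedness estimates: for every $C \in \mathcal{L}(L_\alpha^p)$, every $f \in L_\alpha^p$, and every $t \in (0,1)$,
\begin{align*}
\sum_{j=1}^\infty \|M_{\varphi_{j,t}} C M_{1-\psi_{j,t}} f\|^p \leq K\|C\|^p \|f\|^p, \qquad \sum_{j=1}^\infty \|[C, M_{\varphi_{j,t}}] f\|^p \leq K\|C\|^p \|f\|^p,
\end{align*}
with a constant $K = K(n,p)$. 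Both follow by writing $M_{1-\psi_{j,t}} = I - M_{\psi_{j,t}}$, respectively expanding the commutator, invoking Lemma~\ref{lemmasum}, and applying Proposition~\ref{prop4} using the at-most-$4^{2n}$-fold overlap of the supports of the $\varphi_{j,t}$ and the $\psi_{j,t}$.

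For $\mathrm{(i)} \Rightarrow \mathrm{(ii)}$, given $\epsilon > 0$ I would pick a band operator $B$ with $K\|A-B\|^p < \epsilon$. For $t < 1/\omega(B)$ the separation $\operatorname{dist}_\infty(\operatorname{supp}\varphi_{j,t}, \operatorname{supp}(1-\psi_{j,t})) \geq 1/t$ kills every $M_{\varphi_{j,t}} B M_{1-\psi_{j,t}}$, reducing the sum to $\sum_j \|M_{\varphi_{j,t}} (A-B) M_{1-\psi_{j,t}} f\|^p \leq \epsilon\|f\|^p$. For $\mathrm{(ii)} \Rightarrow \mathrm{(iii)}$, since at each $z$ at most $N := 2^{2n}$ summands are nonzero, Lemma~\ref{lemmasum} yields $|\sum_j (M_{\varphi_{j,t}} A M_{1-\psi_{j,t}} f)(z)|^p \leq N^p \sum_j |(\cdots)(z)|^p$ pointwise; integrating and using $\mathrm{(ii)}$ gives the operator-norm conclusion, and strong convergence of the series follows from the pointwise finiteness. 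For $\mathrm{(iii)} \Rightarrow \mathrm{(i)}$, the decomposition
\[
A = \sum_{j=1}^\infty M_{\varphi_{j,t}} A M_{\psi_{j,t}} + \sum_{j=1}^\infty M_{\varphi_{j,t}} A M_{1-\psi_{j,t}}
\]
exhibits $A$ as a norm limit of band operators: the second summand is $o(1)$ by $\mathrm{(iii)}$, and the first has band width at most $12\sqrt{2n}/t = \operatorname{diam}(\Omega_3(B_j)/t)$, since any $M_f M_{\varphi_{j,t}} A M_{\psi_{j,t}} M_g$ vanishes once $\operatorname{dist}(\operatorname{supp} f, \operatorname{supp} g)$ exceeds this diameter.

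For $\mathrm{(iv)} \Rightarrow \mathrm{(i)}$, observe that $\mathrm{(iv)}$ yields $\sup_j \|[A, M_{\varphi_{j,t}}]\| \to 0$ (since $\|[A, M_{\varphi_{j,t}}] f\|^p \leq \sum_k \|[A, M_{\varphi_{k,t}}] f\|^p$). The identity $M_{\varphi_{j,t}} A M_{1-\psi_{j,t}} = -[A, M_{\varphi_{j,t}}] M_{1-\psi_{j,t}}$, obtained from $\varphi_{j,t}(1-\psi_{j,t}) = 0$, then gives via $M_{1-\psi_{j,t}} = I - M_{\psi_{j,t}}$ and Proposition~\ref{prop4},
\[
\sum_{j=1}^\infty \|M_{\varphi_{j,t}} A M_{1-\psi_{j,t}} f\|^p \leq 2^p \sum_{j=1}^\infty \|[A, M_{\varphi_{j,t}}] f\|^p + 2^p \cdot 4^{2n} \bigl(\sup_k \|[A, M_{\varphi_{k,t}}]\|\bigr)^p \|f\|^p,
\]
which tends to $0$ uniformly in $\|f\|=1$; this is $\mathrm{(ii)}$, whence $\mathrm{(i)}$. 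For $\mathrm{(i)} \Rightarrow \mathrm{(iv)}$, approximate $A$ by a band operator $B$ of width $\omega$; the remainder is controlled by the second universal estimate. For $B$ itself and $t < 1/\omega$, the identity $[B, M_{\varphi_{j,t}}] = M_{\psi_{j,t}} B M_{\varphi_{j,t}} - M_{\varphi_{j,t}} B M_{\psi_{j,t}}$ (which follows from $M_{\varphi_{j,t}} B = M_{\varphi_{j,t}} B M_{\psi_{j,t}}$ and its mirror image, valid by the band width and the support separation) localizes both the input and the output of $[B, M_{\varphi_{j,t}}]$ to $\Omega_3(B_j)/t$. A local block decomposition of $B$ at scale $\omega$ together with the Lipschitz bound $|\varphi_{j,t}(w) - \varphi_{j,t}(z)| \leq tn|w-z|$ yields $\|[B, M_{\varphi_{j,t}}]\| \lesssim tn\omega\|B\|$; combined with the overlap bound and Proposition~\ref{prop4}, this forces $\sum_j \|[B, M_{\varphi_{j,t}}] f\|^p \to 0$.

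The hardest step is the Lipschitz-type estimate $\|[B, M_{\varphi_{j,t}}]\| \lesssim tn\omega\|B\|$ for a band operator $B$: the trivial bound $2\|B\|$ would only yield a bounded, not vanishing, sum. The required decay must be extracted from the observation that $\varphi_{j,t}$ varies by at most $tn\omega$ across any ball of radius $\omega$, while the band property confines the action of $B$ to such balls, and making this quantitatively precise requires a careful local block decomposition of $B$.
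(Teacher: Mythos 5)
Your cyclic argument (i)$\Rightarrow$(ii)$\Rightarrow$(iii)$\Rightarrow$(i) is exactly the paper's: the same band-operator approximation, the same use of the support separation $\operatorname{dist}_\infty(\operatorname{supp}\varphi_{j,t},\operatorname{supp}(1-\psi_{j,t}))\gtrsim 1/t$, the same pointwise-finiteness argument via Lemma \ref{lemmasum} and Proposition \ref{prop6} to pass from a summed $\ell^p$ bound to an operator-norm bound, and the same decomposition $A=\sum_j M_{\varphi_{j,t}}AM_{\psi_{j,t}}+\sum_j M_{\varphi_{j,t}}AM_{1-\psi_{j,t}}$ exhibiting $A$ as a norm limit of band operators.

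For (i)$\Leftrightarrow$(iv) the paper gives no argument and merely cites the unit-ball case. Your (iv)$\Rightarrow$(i) route is correct and self-contained: from (iv) you deduce $\sup_j\|[A,M_{\varphi_{j,t}}]\|\to 0$, and the algebraic identity $M_{\varphi_{j,t}}AM_{1-\psi_{j,t}}=-[A,M_{\varphi_{j,t}}]M_{1-\psi_{j,t}}$ (valid since $\varphi_{j,t}(1-\psi_{j,t})=0$) together with Proposition \ref{prop4} converts the commutator condition into condition (ii). This is a nice shortcut worth keeping. The (i)$\Rightarrow$(iv) direction, by contrast, is not a proof: the reduction via the universal estimate to the case of a band operator $B$ is fine, and the support confinement $[B,M_{\varphi_{j,t}}]f=[B,M_{\varphi_{j,t}}]M_{\chi_{\Omega_3(B_j)/t}}f$ is correctly identified as the mechanism to sum in $j$, but the key quantitative claim $\|[B,M_{\varphi_{j,t}}]\|\lesssim t\,\omega(B)\|B\|$ is exactly the substance of this implication and you leave it at the level of ``a careful local block decomposition \dots yields.'' You should state and prove it as a lemma: tile $\mathbb{C}^n$ by cubes of side $\sim\omega(B)$, write $B$ as a finite sum over shifts $m$ of block-diagonal pieces $\sum_l M_{\chi_{E_{l+m}}}BM_{\chi_{E_l}}$, subtract the constant $\varphi_{j,t}(c_l)$ from $\varphi_{j,t}$ on each block before commuting, bound each block commutator by $\lesssim t\,\omega(B)\|B\|\|M_{\chi_{E_l}}f\|$ via the Lipschitz constant $\leq nt$ of $\varphi_{j,t}$, and use disjointness of the $E_l$ (respectively $E_{l+m}$) to reassemble an $\ell^p$ bound uniform in $j$. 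As written, (i)$\Rightarrow$(iv) is not established.
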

\begin{proof}
The strong convergence in \textit{(iii)} follows from the fact that $\sum_{j=1}^\infty M_{\varphi_{j,t}} A$ converges strongly, which can easily be seen, and the following Lemma \ref{lemma10}. \\
\textit{(i) $\implies$ (ii)}: Let $\varepsilon > 0$ and $B$ be a band operator such that $\| A - B\| < \varepsilon$. Further, let $t > 0$ be small enough such that 
\[ \operatorname{dist} \big (\operatorname{supp}(\varphi_{j,t}), \operatorname{supp}(1-\psi_{j,t})\big ) > \omega(B), \]
where the distance on the left-hand side is by construction independend of $j$. Then, for all $j \in \mathbb{N}$
\begin{align*}
M_{\varphi_{j,t}} B M_{1-\psi_{j,t}} = 0.
\end{align*}
We thus get for $f \in L_\alpha^p$
\begin{align*}
\sum_{j=1}^\infty \| M_{\varphi_{j,t}} AM_{1-\psi_{j,t}} f\|^p &= \sum_{j=1}^\infty \| M_{\varphi_{j,t}} (A - B) M_{1 - \psi_{j,t}} f\|^p\\
&\leq 2^p \sum_{j=1}^\infty ( \| M_{\varphi_{j,t}} (A - B) f\|^p \\
&\quad \quad \quad \quad + \| M_{\varphi_{j,t}} (A - B) M_{\psi_{j,t}} f\|^p)\\
&\leq 2^p 2^{2n} (\| (A - B)f\|^p + \| (A - B)M_{\psi_{j,t}}f\|^p)\\
&\leq 2^{p+1}2^{2n} \varepsilon^p \| f\|^p,
\end{align*}
where we used Proposition \ref{prop4} and Lemma \ref{lemmasum}. Since $\varepsilon > 0$ was arbitrary, the result follows.\\
\textit{(ii) $\implies$ (iii)}: Using Lemma \ref{lemmasum} combined with Proposition \ref{prop6} it is
\begin{align*}
\Big \| \sum_{j=1}^\infty M_{\varphi_{j,t}}AM_{1-\psi_{j,t}}f \Big \|^p &= \int_{\mathbb{C}^n} \Big  |\sum_{j=1}^\infty (M_{\varphi_{j,t}} AM_{1-\psi_{j,t}} f)(z)\Big |^p d\mu_{p\alpha/2}\\
&\leq \int_{\mathbb{C}^n} (2^{2n})^p \sum_{j=1}^\infty |(M_{\varphi_{j,t}} AM_{1-\psi_{j,t}}f)(z)|^p d\mu_{p\alpha/2}\\
&= (2^{2n})^p \sum_{j=1}^\infty \int_{\mathbb{C}^n} |(M_{\varphi_{j,t}} AM_{1 - \psi_{j,t}}f)(z)|^p d\mu_{p\alpha/2}\\
&= (2^{2n})^p \sum_{j=1}^\infty \| M_{\varphi_{j,t}} AM_{1-\psi_{j,t}} f\|^p.
\end{align*}
Taking the supremum over all $f$ with $\| f\|= 1$ and then the limit $t \to 0$ gives the result.\\
\textit{(iii) $\implies$ (i)}: The operator
\begin{align*}
A_m := \sum_{j=1}^\infty M_{\varphi_{j,\frac{1}{m}}} AM_{\psi_{j,\frac{1}{m}}}
\end{align*}
can easily be seen to be a band operator. Since $\sum_j \varphi_{j,t} = 1$ for all $t>0$ it is
\begin{align*}
\| A - A_m\| &= \Big \| \sum_{j=1}^\infty (M_{\varphi_{j,\frac{1}{m}}} A - M_{\varphi_{j,\frac{1}{m}}} A M_{\psi_{j,\frac{1}{m}}} ) \Big \|\\
&= \Big \| \sum_{j=1}^\infty M_{\varphi_{j,\frac{1}{m}}} AM_{1-\psi_{j,\frac{1}{m}}} \Big \|\\
&\to 0
\end{align*}
for $m \to \infty$.\\
The equivalence \textit{(i) $\Longleftrightarrow$ (iv)} is more technical and we refer to the identical proof in the unit ball case \cite[Proposition 11]{Hagger}.
\end{proof}

\begin{lmm}\label{lemma10}
For every $j \in \mathbb{N}$ let $a_j,b_j : \mathbb{C}^n \to [0,1]$ be measurable functions and assume that there is some $N \in \mathbb{N}$ such that each $z \in \mathbb{C}$ belongs to at most $N$ of the sets $\operatorname{supp}(a_j)$ and at most $M$ of the sets $\operatorname{supp}(b_j)$. If $A \in \mathcal{L}(L_\alpha^p)$, then the series
\begin{align*}
\sum_{j=1}^\infty M_{a_j}AM_{b_j}
\end{align*}
converges strongly and $\|\sum_{j=1}^\infty M_{a_j}AM_{b_j}\| \leq NM\|A\|$.
\end{lmm}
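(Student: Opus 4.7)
The strategy is to prove a single quantitative estimate on arbitrary tail sums $\sum_{j=k+1}^{l} M_{a_j} A M_{b_j} f$; this will simultaneously yield the Cauchy property needed for strong convergence and the claimed operator-norm bound. The key structural observation is that at every point $z \in \mathbb{C}^n$, the sum $\sum_{j} (M_{a_j} A M_{b_j} f)(z)$ has at most $N$ nonzero summands, because the support condition on the $a_j$ forces at most $N$ of the factors $a_j(z)$ to be nonzero.

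Fix $f \in L_\alpha^p$ and $0 \le k < l$. By the pointwise $N$-sparsity of the sum, Lemma \ref{lemmasum} applied pointwise gives
\begin{align*}
\left|\sum_{j=k+1}^{l} (M_{a_j} A M_{b_j} f)(z)\right|^p \le N^p \sum_{j=k+1}^{l} |(M_{a_j} A M_{b_j} f)(z)|^p.
\end{align*}
Integrating against $d\mu_{p\alpha/2}$, using $|a_j| \le 1$ to drop the leading multiplication, and pulling out the operator norm of $A$ yields
\begin{align*}
\left\|\sum_{j=k+1}^{l} M_{a_j} A M_{b_j} f\right\|_{p,\alpha}^p \le N^p \|A\|^p \sum_{j=k+1}^{l} \|M_{b_j} f\|_{p,\alpha}^p.
\end{align*}
Proposition \ref{prop4} applied to the sequence $(b_j)$ with cover constant $M$ gives $\sum_{j=1}^{\infty} \|M_{b_j} f\|_{p,\alpha}^p \le M \|f\|_{p,\alpha}^p < \infty$, so the tail on the right tends to $0$ as $k \to \infty$.

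Setting $T_k := \sum_{j=1}^{k} M_{a_j} A M_{b_j}$, the previous estimate shows $(T_k f)$ is Cauchy in $L_\alpha^p$ for every $f$, which is precisely the strong convergence claimed. Taking $k = 0$, letting $l \to \infty$ and supping over $\|f\|_{p,\alpha} = 1$ yields $\|T\|^p \le N^p M \|A\|^p$, and since $M \ge 1$ (the case $M = 0$ forcing all $b_j \equiv 0$ is trivial) this gives $\|T\| \le N M^{1/p} \|A\| \le NM \|A\|$. The argument is essentially routine; the only point deserving attention is exploiting the $N$-sparsity before applying Lemma \ref{lemmasum}, rather than naively estimating by the triangle inequality (which would fail to yield a finite bound). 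No real obstacle is expected.
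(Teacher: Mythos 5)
Your proof is correct and follows essentially the same approach as the paper: exploit the pointwise $N$-sparsity of the sum over $j$ (coming from the overlap bound on $\operatorname{supp}(a_j)$), apply Lemma \ref{lemmasum} to reduce to $\sum_j \|M_{b_j}f\|^p$, and then invoke Proposition \ref{prop4} to show this tail vanishes. Your explicit derivation of the norm bound (which even yields the slightly sharper $NM^{1/p}\|A\|$) is a welcome bit of detail where the paper merely states that ``the norm estimate follows easily.''
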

\begin{proof}
Observe that for each $f \in L_\alpha^p$ we have, as a consequence of Proposition \ref{prop4},
 $\sum_{j=m}^\infty \| M_{b_j}f\|^p \to 0$ for $m \to \infty$. To prove the lemma it suffices to show that
\begin{align*}
\Big \| \sum_{j=m}^\infty M_{a_j} AM_{b_j}f \Big \|^p \to 0
\end{align*}
for each $f \in L_\alpha^p$ as $m \to \infty$. So let $f \in L_\alpha^p$. Then
\begin{align*}
\Big \| \sum_{j=m}^\infty M_{a_j} AM_{b_j} f \Big \|^p &= \int_{\mathbb{C}^n} \Big |\sum_{j=m}^\infty (M_{a_j} AM_{b_j} f)(z)\Big |^p d\mu_{p\alpha/2}(z)\\
&\leq \int_{\mathbb{C}^n} \Big (\sum_{j=m}^\infty |(M_{a_j} AM_{b_j}f)(z)| \Big )^p d\mu_{p\alpha/2}(z).
\intertext{By assumption, the sum under the integral is pointwise a finite sum with at most $N$ terms. Using Lemma \ref{lemmasum} we can continue the estimate as follows:}
&\leq \int_{\mathbb{C}^n} N^p \sum_{j=m}^\infty |(M_{a_j} AM_{b_j}f)(z)|^p d\mu_{p\alpha/2}(z)\\
&= N^p \sum_{j=m}^\infty \int_{\mathbb{C}^n} |(M_{a_j}AM_{b_j}f)(z)|^p d\mu_{p\alpha/2}(z)\\
&= N^p \sum_{j=m}^\infty \| M_{a_j} AM_{b_j}f\|^p.
\intertext{Using $|a_j(z)|\leq 1$, it follows:}
&\leq N^p \| A\|^p \sum_{j=m}^\infty \| M_{b_j} f\|^p\\
&\to 0
\end{align*}
as $m \to \infty$. The norm estimate follows easily as well.
\end{proof}

Here are some of the properties of $\BDO_\alpha^p$:
\begin{prop}\label{prop11}
\begin{enumerate}
\item $M_f \in \BDO_\alpha^p$ for all $f \in L^\infty (\mathbb{C}^n)$.
\item $\BDO_\alpha^p$ is a closed subalgebra of $\mathcal{L}(L_\alpha^p)$.
\item If $A \in \BDO_\alpha^p$ is Fredholm and $B$ is a Fredholm regularizer of $A$, then $B \in \BDO_\alpha^p$. In particular, $\BDO_\alpha^p$ is inverse closed.
\item $\mathcal{K}(L_\alpha^p)$ is a closed and two-sided ideal in $\BDO_\alpha^p$.
\item $A \in \BDO_\alpha^p$ if and only if $A^\ast \in \BDO_\alpha^q$, where $\frac{1}{p} + \frac{1}{q} = 1$. In particular, $\BDO_\alpha^2$ is a $C^\ast$-algebra.
\end{enumerate}
\end{prop}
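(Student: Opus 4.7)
I proceed in the order (i)--(ii)--(v)--(iv)--(iii), since (iv) will be used in (iii).

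\textbf{(i), (ii).} That $M_f$ is itself a band operator of band width $0$ is immediate: $M_g M_f M_h = M_{gfh}$ vanishes whenever $\operatorname{supp} g \cap \operatorname{supp} h = \varnothing$. For (ii), closure under sums is clear; for products one inserts the cutoff $h = \chi_{\{z \colon \operatorname{dist}(z, \operatorname{supp} g) \leq \omega(B)\}}$, so that $M_{1-h} B M_g = 0$ by definition of $\omega(B)$, and simultaneously $M_f A M_h = 0$ whenever $\operatorname{dist}(\operatorname{supp} f, \operatorname{supp} g) > \omega(A) + \omega(B)$ (every point of $\operatorname{supp} h$ lies within $\omega(B)$ of $\operatorname{supp} g$). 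This gives $M_f A B M_g = M_f A M_h B M_g = 0$ and hence $\omega(AB) \leq \omega(A) + \omega(B)$. Thus the band operators form a subalgebra of $\mathcal{L}(L_\alpha^p)$, and $\BDO_\alpha^p$ is its norm closure.

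\textbf{(v).} Taking the Banach adjoint of $M_g A M_f = 0$ and using Proposition \ref{propzhu2} to identify $(L_\alpha^p)' \cong L_\alpha^q$ with $M_f^* = M_{\bar f}$, one obtains $M_{\bar f} A^* M_{\bar g} = 0$ under the same distance condition on the supports. Since $\operatorname{supp} \bar f = \operatorname{supp} f$, $A^*$ is a band operator on $L_\alpha^q$ with $\omega(A^*) \leq \omega(A)$. Passing to norm limits gives $A \in \BDO_\alpha^p \Longleftrightarrow A^* \in \BDO_\alpha^q$, and the $C^*$-algebra statement for $p = 2$ follows from (ii).

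\textbf{(iv).} The substantive content is $\mathcal{K}(L_\alpha^p) \subseteq \BDO_\alpha^p$. A rank-one operator $u \otimes \varphi$ (with respect to the pairing of Proposition \ref{propzhu2}) whose vectors $u, \varphi$ are supported in $B(0, R)$ is a band operator with band width $\leq 2R$, since for $\operatorname{dist}(\operatorname{supp} f, \operatorname{supp} g) > 2R$ at least one of $\operatorname{supp} f, \operatorname{supp} g$ is disjoint from $B(0, R)$, annihilating either $M_f u$ or $M_g \varphi$. General rank-one operators follow by truncating $u$ and $\varphi$ against $\chi_{B(0,R)}$ (dominated convergence), finite-rank operators by linearity, and compact operators by density of finite-rank operators in $\mathcal{K}(L_\alpha^p)$ (approximation property of $L^p$-spaces). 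The ideal property in $\BDO_\alpha^p$ is inherited from the corresponding ideal property in $\mathcal{L}(L_\alpha^p)$.

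\textbf{(iii).} Write $\Phi = M_{\varphi_{j,t}}$ and set $K_1 = BA - I$, $K_2 = AB - I$, both compact. A direct computation using $AB = I + K_2$ yields $A [B, \Phi] = -[A, \Phi] B + [K_2, \Phi]$. Multiplying on the left by $B$ (so that $BA = I + K_1$ appears) and then using the Leibniz identity $K_1 [B, \Phi] = [K_1 B, \Phi] - [K_1, \Phi] B$ to eliminate the otherwise-obstructive factor $I + K_1$, I arrive at
\begin{equation*}
 [B, \Phi] \;=\; -B [A, \Phi] B + B [K_2, \Phi] - [K_1 B, \Phi] + [K_1, \Phi] B.
\end{equation*}
Applying Lemma \ref{lemmasum}, summing over $j$, and taking $\sup_{\|f\| = 1}$, each of the four terms on the right tends to $0$ as $t \to 0$ via characterization \emph{(iv)} of Proposition \ref{prop8} applied to the respective operator: $A$ by hypothesis, and the three others because $K_1, K_2, K_1 B \in \BDO_\alpha^p$ (all compact, so in $\BDO_\alpha^p$ by (iv)). Hence $\sup_{\|f\| = 1} \sum_j \|[B, \Phi_{j,t}] f\|^p \to 0$, so $B \in \BDO_\alpha^p$ by the same characterization. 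Inverse closedness is the special case $K_1 = K_2 = 0$. The main obstacle I anticipate is spotting the Leibniz trick that sidesteps the non-invertibility of $I + K_1$ on the left of $[B, \Phi]$; once it is in place, everything reduces cleanly to applying Proposition \ref{prop8} to already-known band-dominated operators.
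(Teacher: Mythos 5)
Your argument is correct. For (i), (ii), (v) it gives exactly the ``easy consequences of the definition'' the paper alludes to (band width $0$ for $M_f$, subadditivity of band width under sums and products via the cutoff $h$, preservation of supports under conjugation). For (iv), truncating the vectors of rank-one operators and using density of finite-rank operators, and for (iii), deriving the commutator identity
\[
[B,\Phi] = -B[A,\Phi]B + B[K_2,\Phi] - [K_1B,\Phi] + [K_1,\Phi]B
\]
and then invoking Proposition \ref{prop8}\textit{(iv)} term by term, is the standard argument in the limit-operator literature. The paper itself gives no proof of (iii) and (iv) but defers to \cite[Proposition 13]{Hagger}, whose proof proceeds along essentially the same lines as yours; the Leibniz trick you highlight (rewriting $K_1[B,\Phi]$ as $[K_1B,\Phi]-[K_1,\Phi]B$ so that every term involves a commutator of $\Phi$ with an operator already known to lie in $\BDO_\alpha^p$) is indeed the key step, and your identification of it as the main obstacle is accurate. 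One cosmetic point: in the definition of (ii) you should inflate the cutoff set slightly, e.g.\ to $\{z:\operatorname{dist}(z,\operatorname{supp} g)\le\omega(B)+\delta\}$, to avoid the boundary case where $\operatorname{dist}$ equals $\omega(B)$ exactly; this only changes the conclusion to $\omega(AB)\le\omega(A)+\omega(B)+2\delta$ for all $\delta>0$, which suffices.
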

\begin{proof}
\textit{(i), (ii)} and \textit{(v)} are easy consequences of the definition of $\BDO_\alpha^p$. The proofs of \textit{(iii)} and \textit{(iv)} are quite technical. Since we will not need those statements for our purposes, we only refer to the identical proofs in the unit ball case in \cite[Proposition 13]{Hagger}.
\end{proof}
In the following we will show that Toeplitz operators are, in a sense made precise below, in $\BDO_\alpha^p$. The next lemma will be crucial for this.
\begin{lmm}\label{Lemma14}
There is a function $\beta_{p,\alpha}: [1, \infty) \to [0, \infty)$ with $\lim\limits_{\sigma \to \infty} \beta_{p, \alpha}(\sigma) = 0$\\ which satisfies the following property: If $(a_j)_{j \in \mathbb{N}}, (b_j)_{j \in \mathbb{N}}$ are sequences of measurable functions from $\mathbb{C}^n$ to $[0,1]$ such that
\begin{itemize}
	\item there exists $N \in \mathbb{N}$ such that each $z \in \mathbb{C}^n$ is contained in at most $N$ of the sets $\operatorname{supp}(a_j)$ and in at most $N$ of the sets $\operatorname{supp}(b_j)$,
	\item there exists $\sigma \geq 1$ such that $\operatorname{dist}\big (\operatorname{supp} a_j, \operatorname{supp}(1-b_j) \big ) \geq \sigma$ for all $j \in \mathbb{N}$,
\end{itemize}
then
\begin{align*}
\Big \| \sum_{j=1}^\infty M_{a_j} P_\alpha M_{1-b_j} \Big \| \leq N^2 \beta_{p,\alpha}(\sigma).
\end{align*}
In particular,
\begin{align*}
\Big \| \sum_{j=1}^\infty M_{a_j} P_\alpha M_{1-b_j} \Big \| \to 0
\end{align*}
for $\displaystyle \inf_{j \in\mathbb{N}} \operatorname{dist} \big ( \operatorname{supp} a_j, \operatorname{supp} (1-b_j) \big ) \to \infty$.
\end{lmm}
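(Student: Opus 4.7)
The plan is to reduce the estimate to an elementary Schur-type bound for an integral operator with a translation-invariant Gaussian kernel. First I would conjugate by the standard isometric isomorphism $U : L_\alpha^p \to L^p(\mathbb{C}^n,dz)$ defined by $Uf(z) = (p\alpha/2\pi)^{n/p} e^{-\alpha|z|^2/2} f(z)$, which transfers operator norms exactly. Under this conjugation a quick computation shows that $UP_\alpha U^{-1}$ is the integral operator on $L^p(dz)$ whose kernel has modulus
\[
(\alpha/\pi)^n \, e^{-\alpha|z-w|^2/2},
\]
the cross term $e^{\alpha \operatorname{Re}\langle z,w\rangle}$ combining with $e^{-\alpha|z|^2/2}$ and $e^{-\alpha|w|^2/2}$ to complete the square. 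Hence $U \bigl( M_{a_j} P_\alpha M_{1-b_j} \bigr) U^{-1}$ has kernel with modulus $(\alpha/\pi)^n a_j(z)(1-b_j(w)) e^{-\alpha|z-w|^2/2}$.

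Summing over $j$, the kernel of (the partial sums of) $U \bigl( \sum_j M_{a_j}P_\alpha M_{1-b_j} \bigr) U^{-1}$ is dominated in absolute value by
\[
K(z,w) \;:=\; (\alpha/\pi)^n \, e^{-\alpha|z-w|^2/2} \sum_{j=1}^\infty a_j(z)(1-b_j(w)).
\]
For fixed $z$ the inner sum has at most $N$ nonzero summands by the overlap hypothesis on $(a_j)$, so the sum is bounded by $N$; and crucially, a term $a_j(z)(1-b_j(w))$ can only be nonzero when $z \in \operatorname{supp} a_j$ and $w \in \operatorname{supp}(1-b_j)$, forcing $|z-w| \geq \sigma$. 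Therefore $K(z,w) \leq N(\alpha/\pi)^n e^{-\alpha|z-w|^2/2} \chi_{\{|z-w|\geq\sigma\}}(z,w)$, and both Schur integrals satisfy
\[
\sup_z \int_{\mathbb{C}^n} K(z,w)\,dw \;=\; \sup_w \int_{\mathbb{C}^n} K(z,w)\,dz \;\leq\; N \,\beta_{p,\alpha}(\sigma),
\]
where I set $\beta_{p,\alpha}(\sigma) := \int_{|u|\geq\sigma} (\alpha/\pi)^n e^{-\alpha|u|^2/2}\,du$, a Gaussian tail that visibly tends to $0$ as $\sigma\to\infty$. The classical Schur test on $L^p(dz)$ then gives the operator norm bound $N\,\beta_{p,\alpha}(\sigma)$, which is sharper than (hence implies) the stated bound $N^2\beta_{p,\alpha}(\sigma)$.

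The remaining concern is that the series $\sum_j M_{a_j}P_\alpha M_{1-b_j}$ is only defined as a strong limit; for this I would apply Lemma \ref{lemma10} to ensure strong convergence, and observe that the Schur estimate above, when applied to every partial sum, yields the same bound uniformly in the truncation, so the estimate passes to the strong limit. The main technical obstacle is the bookkeeping in the conjugation step: one must verify the cancellation of the exponential weights so that the residual kernel is exactly the translation-invariant Gaussian $e^{-\alpha|z-w|^2/2}$; once this is in hand everything else is routine Schur test and Gaussian tail analysis.
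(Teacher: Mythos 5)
Your proof is correct, and it takes a genuinely different and cleaner route than the paper's. The paper first treats the case $N=1$ and establishes the bound on the two endpoints $p=\infty$ (via a direct pointwise $L_\alpha^\infty$ estimate) and $p=2$ (citing the Schur test from \cite[Lemma 2.6]{Bauer_Isralowitz}), then obtains $2<p<\infty$ by complex interpolation, handles $1<p<2$ by passing to the adjoint operator on $L_\alpha^q$, and finally reduces $N>1$ to $N=1$ by partitioning each $\operatorname{supp} a_j$ and $\operatorname{supp} b_j$ into $N$ disjoint layers, yielding $N^2$ terms and hence the factor $N^2$. You instead conjugate once by the isometric isomorphism $U: L_\alpha^p \to L^p(dz)$, under which $P_\alpha$ becomes an integral operator with kernel whose modulus is exactly the translation-invariant Gaussian $(\alpha/\pi)^n e^{-\alpha|z-w|^2/2}$, and then apply the classical $L^p$ Schur test (with constant weight) to a single dominating kernel $N(\alpha/\pi)^n e^{-\alpha|z-w|^2/2}\chi_{\{|z-w|\geq\sigma\}}$. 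This handles all $p\in(1,\infty)$ in one stroke, avoids interpolation and duality entirely, and gives the sharper bound $N\beta_{p,\alpha}(\sigma)$ rather than $N^2\beta_{p,\alpha}(\sigma)$ since the overlap hypothesis on $(a_j)$ alone already bounds the pointwise sum $\sum_j a_j(z)(1-b_j(w))$ by $N$; the $N^2$ in the paper is an artifact of its decomposition, and the overlap condition on the $b_j$ is not needed for the norm estimate at all. The only loose end in your write-up is the appeal to Lemma \ref{lemma10} for strong convergence of the series: that lemma is stated for $\sum_j M_{a_j}AM_{b_j}$ with bounded overlap on both support families, whereas $\operatorname{supp}(1-b_j)$ need not have bounded overlap. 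One either splits $\sum_j M_{a_j}P_\alpha M_{1-b_j}=\sum_j M_{a_j}P_\alpha - \sum_j M_{a_j}P_\alpha M_{b_j}$ (the first summand is $M_{\sum_j a_j}P_\alpha$, the second is covered by Lemma \ref{lemma10}), or simply notes that the uniform norm bound on the partial sums together with Banach--Steinhaus already gives what is needed. With that small repair the argument is complete and self-contained.
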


\begin{proof}
Observe that all operator series mentioned in the statement above and in the following proof converge $*$-strongly as an easy consequence of Lemma \ref{lemma10}. We borrow ideas from the proof of \cite[Lemma 2.6]{Bauer_Isralowitz} and sketch them here. It will be appropriate to start with the case $N=1$. We first consider the limit case $p = \infty$. Define
\begin{align*}
L_\alpha^\infty := \{ f: \mathbb{C}^n \to \mathbb{C}; f \text{ measurable and } \| f\|_{\infty,\alpha} < \infty\}
\end{align*}
with
\begin{align*}
\| f\|_{\infty,\alpha} := \underset{z \in \mathbb{C}^n}{\operatorname{ess sup}} | f(z)|e^{-\frac{\alpha}{2}|z|^2}.
\end{align*}
$P_\alpha$, that is the integral operator with the same integral kernel as for the case $p < \infty$, is a projection from $L_\alpha^\infty$ to $F_\alpha^\infty$, the closed subspace of holomorphic functions, and can hence be considered as an operator on $L_\alpha^\infty$ (see \cite[Corollary 2.22]{Zhu}). For $f \in L_\alpha^\infty$ and $z \in \mathbb{C}^n$, we get
\begin{align*}
&\Big | \sum_{j=1}^\infty (M_{a_j} P_\alpha M_{1-b_j} f)(z) \Big |e^{-\frac{\alpha}{2}|z|^2} \\
&\quad \quad \quad \leq \Big ( \frac{\alpha}{\pi} \Big )^n \sum_{j=1}^\infty |a_j(z)| \int_{\mathbb{C}^n} |1-b_j(w)| |f(w)| e^{-\frac{\alpha}{2}|w|^2} e^{-\frac{\alpha}{2}|w-z|^2} dw\\
&\quad \quad \quad \leq 2^n \| f\|_{\infty,\alpha}
\end{align*}
and hence it is $\| \sum_{j=1}^\infty M_{a_j} P_\alpha M_{1-b_j} \| \leq 2^n$ for $p = \infty$. Observe that, by the same argument with $a_1 \equiv 1 \equiv b_j$ and $a_j \equiv 0 \equiv b_1$ for all $j > 1$ we get that $P_\alpha$ is bounded on $L_\alpha^\infty$ with norm $\leq 2^n$.
If we can prove an estimate of the form $\| \sum_{j=1}^\infty M_{a_j} P_\alpha M_{1-b_j}\| \leq \beta_{p,\alpha}(\sigma)$ for $p = 2$, the result follows by interpolation for all $2 \leq p < \infty$ (see e.g. \cite[Section 9]{Janson_Peetre_Rochberg} or \cite[Chapter 2.4]{Zhu} for results on interpolation of the spaces $L_\alpha^p$). As in the proof of \cite[Lemma 2.6]{Bauer_Isralowitz}, one can prove the estimate for $p=2$ using the Schur test.

For $1 < p < 2$, instead of proving $\| \sum_{j=1}^\infty M_{a_j} P_\alpha M_{1-b_j}\| \leq \beta_{p,\alpha}(\sigma)$ directly, we will prove an estimate $\| \sum_{j=1}^\infty M_{1-b_j} P_\alpha M_{a_j}\| \leq \beta_{q,\alpha}'(\sigma)$ for the operator norm on $L_\alpha^q \cong (L^p_{\alpha})'$ and then consider adjoints (see Proposition \ref{propzhu2}). As before, the estimate on $L_\alpha^q$ can be proven with the two limit steps $q = \infty$, $q = 2$ and then using interpolation. For the case $q= \infty$, observe that in the same way as above one can show
\begin{align*}
\Big | \sum_{j=1}^\infty (M_{a_j} P_\alpha M_{b_j} f)(z) \Big |e^{-\frac{\alpha}{2}|z|^2} \leq 2^n \| f\|_{\infty,\alpha}
\end{align*}
(i.e.~replace $1-b_j$ by $b_j$) for $f \in L_\alpha^\infty$. Using
\begin{align*}
\Big \| \sum_{j=1}^\infty P_\alpha M_{a_j} f \Big \|_{\infty,\alpha} & \leq \| P_\alpha\| \Big \| \sum_{j=1}^\infty M_{a_j} f \Big \|_{\infty,\alpha} \\
&\leq \| P_\alpha\| \| f\|_{\infty,\alpha}
\end{align*}
one gets
\begin{align*}
\Big \| \sum_{j=1}^\infty M_{1-b_j} P_\alpha M_{a_j} \Big \| \leq 2^n + \| P_\alpha\|
\end{align*}
for the case $q = \infty$. For $q = 2$, we already have the estimate since
\[\left(\sum_{j=1}^\infty M_{1-b_j} P_\alpha M_{a_j}\right)^\ast = \sum_{j=1}^\infty M_{a_j} P_\alpha M_{1-b_j}\]
and the two series converge strongly.

For the case $N > 1$ we set $\Lambda_1(z) = \{ j \in \mathbb{N}; z \in \operatorname{supp}(a_j)\}$ and $\Lambda_2(z) = \{j \in \mathbb{N}; z \in \operatorname{supp}(b_j)\}$, both sets are considered to be ordered in the natural way. With 
\begin{align*}
A_j^k = \{ z \in \operatorname{supp}(a_j); j \text{ is the } \text{$k$-th element of } \Lambda_1(z)\}
\end{align*}
and 
\begin{align*}
B_j^l = \{ z \in \operatorname{supp}(b_j); j \text{ is the } \text{$l$-th element of } \Lambda_2(z)\}
\end{align*}
we have the disjoint unions $\operatorname{supp}(a_j) = A_j^1 \cup \hdots \cup A_j^N$ and $\operatorname{supp}(b_j) = B_j^1 \cup \hdots \cup B_j^N$ and both $A_{j_1}^k \cap A_{j_2}^k = \varnothing$ and $B_{j_1}^l \cap B_{j_2}^l = \varnothing$ hold for $j_1 \neq j_2$ and all $k, l = 1, \dots, N$. It follows
\begin{align*}
\sum_{j=1}^\infty M_{a_j} P_\alpha M_{1-b_j} = \sum_{k=1}^N \sum_{l=1}^N \sum_{j=1}^\infty M_{a_j \chi_{A_j^k}} P_\alpha M_{1-b_j \chi_{B_j^l}}
\end{align*}
and we can write the operator as a finite sum of operators which fulfill the requirements of the lemma for $N=1$.
\end{proof}

For an operator $A \in \mathcal{L}(F_\alpha^p)$ we define its extension to $L_\alpha^p$ by $\hat{A} = AP_\alpha + Q_\alpha$, where $Q_\alpha = \Id - P_\alpha$. Now we can prove the announced result about Toeplitz operators being band-dominated:
\begin{thm}
For any $A \in \mathcal{T}_{p,\alpha}$ it holds $\hat{A} \in \BDO_\alpha^p$.
\end{thm}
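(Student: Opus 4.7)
The plan is to reduce the statement to two facts: that $P_\alpha \in \BDO_\alpha^p$, and that the ``one-sided lifting'' $\Psi(A) := AP_\alpha$ defines a bounded algebra homomorphism from $\mathcal{L}(F_\alpha^p)$ into $\mathcal{L}(L_\alpha^p)$. Granted these, $\Psi(T_f) = P_\alpha M_f P_\alpha$ is a product of three band-dominated operators: $M_f \in \BDO_\alpha^p$ by Proposition \ref{prop11}(i), $P_\alpha \in \BDO_\alpha^p$, and $\BDO_\alpha^p$ is a subalgebra by Proposition \ref{prop11}(ii). Since $\Psi$ is continuous and $\BDO_\alpha^p$ is norm-closed, we would conclude $\Psi(\mathcal{T}_{p,\alpha}) \subseteq \BDO_\alpha^p$, and then $\hat{A} = AP_\alpha + Q_\alpha = \Psi(A) + (\Id - P_\alpha) \in \BDO_\alpha^p$, as required.

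The homomorphism property of $\Psi$ is immediate: since $BP_\alpha$ takes values in $F_\alpha^p$ and $P_\alpha$ restricts to the identity there, $\Psi(A)\Psi(B) = AP_\alpha B P_\alpha = A B P_\alpha = \Psi(AB)$. Linearity and boundedness are clear. One should note that the extension map $A \mapsto \hat{A}$ itself cannot be used for this reduction, since it fails to be linear: a short computation gives $\widehat{A+B} = \hat{A} + \hat{B} - Q_\alpha$, so any algebraic argument has to be carried on the level of $\Psi$.

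The crux is therefore showing $P_\alpha \in \BDO_\alpha^p$, and this is essentially immediate from the off-diagonal decay estimate of Lemma \ref{Lemma14}. I plan to verify criterion \textit{(iii)} of Proposition \ref{prop8} by setting $a_j = \varphi_{j,t}$ and $b_j = \psi_{j,t}$. Proposition \ref{prop6}(ii) supplies the required finite-overlap constant, while the construction of $\varphi_j, \psi_j$ ensures
\[
\operatorname{dist}(\operatorname{supp} \varphi_{j,t}, \operatorname{supp}(1-\psi_{j,t})) \longrightarrow \infty \qquad \text{as } t \to 0.
\]
Lemma \ref{Lemma14} then forces
\[
\Big\| \sum_{j=1}^\infty M_{\varphi_{j,t}} P_\alpha M_{1-\psi_{j,t}} \Big\| \longrightarrow 0
\]
as $t \to 0$, so $P_\alpha \in \BDO_\alpha^p$. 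The genuine analytic content (off-diagonal decay of the Fock projection) has been completely absorbed into Lemma \ref{Lemma14}; the theorem itself is an almost formal consequence, with the only real subtlety being the non-linearity of $A \mapsto \hat A$ noted above.
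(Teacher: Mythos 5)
Your proof is correct and follows the same route as the paper's: establish $P_\alpha \in \BDO_\alpha^p$ via Lemma \ref{Lemma14} and criterion \textit{(iii)} of Proposition \ref{prop8}, observe $M_f, Q_\alpha \in \BDO_\alpha^p$, and then pass from Toeplitz operators to the whole algebra $\mathcal{T}_{p,\alpha}$ using that $\BDO_\alpha^p$ is a norm-closed subalgebra. The paper's own version of the last step is compressed into the single phrase ``since $\BDO_\alpha^p$ is a norm-closed algebra, the result follows,'' which tacitly relies on the same structure you make explicit: $A \mapsto \hat A$ is multiplicative (because $P_\alpha Q_\alpha = 0$ and $Q_\alpha B P_\alpha = 0$ for $B \in \mathcal{L}(F_\alpha^p)$) but not additive, so one should really argue through the linear, multiplicative, bounded map $\Psi(A) = A P_\alpha$ and observe $\hat A \in \BDO_\alpha^p$ iff $\Psi(A) \in \BDO_\alpha^p$. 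Your note that $\widehat{A+B} = \hat A + \hat B - Q_\alpha$ and the resulting care about non-linearity is a genuine (if small) improvement in rigor over the paper's phrasing; the analytic content and the dependencies on the surrounding results are otherwise identical.
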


\begin{proof}
We obtain that $P_\alpha$ is in $\BDO_\alpha^p$ by combining Lemma \ref{Lemma14} and Proposition \ref{prop8} with the fact that
\begin{align*}
\lim_{t \to 0} \inf_{j \in \mathbb{N}} \operatorname{dist} \big (\operatorname{supp}(\varphi_{j,t}), \operatorname{supp}(1-\psi_{j,t}) \big ) \geq \lim_{t \to 0} \frac{2}{t} = \infty.
\end{align*}
By Proposition \ref{prop11}, the extension of every Toeplitz operator is in $\BDO_\alpha^p$ and hence, since $\BDO_\alpha^p$ is a norm-closed algebra, the result follows.
\end{proof}
The last result in this section will be a criterion about Fredholmness for band-dominated operators.

\begin{prop}\label{prop21}
Let $A \in \BDO_\alpha^p$ be such that $[A,P_\alpha] = 0$. Assume that there is a positive constant $M$ such that for every $t > 0$ there is an integer $j_0(t)> 0$ such that for all $j \geq j_0(t)$ there are operators $B_{j,t}, C_{j,t} \in \mathcal{L}(L_\alpha^p)$ with
\begin{align*}
\| B_{j,t}\|, \| C_{j,t}\| \leq M
\end{align*} 
and
\begin{align*}
B_{j,t} A M_{\psi_{j,t}} = M_{\psi_{j,t}} = M_{\psi_{j,t}} A C_{j,t}.
\end{align*}
Then $A|_{F_\alpha^p}$ is Fredholm and $\| \big (A|_{F_\alpha^p} + \mathcal{K}(F_\alpha^p) \big )^{-1} \| \leq 2^{6n+1} \| P_\alpha\| M$.
\end{prop}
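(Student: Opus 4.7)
The plan is to manufacture a left Fredholm regularizer of $A|_{F_\alpha^p}$ by gluing the local left inverses $B_{j,t}$ through the partition of unity $(\varphi_{j,t})_j$, and in parallel a right regularizer from the $C_{j,t}$. For $t > 0$ fixed I set
\[
B_t := \sum_{j \geq j_0(t)} M_{\varphi_{j,t}} B_{j,t} M_{\psi_{j,t}},
\]
which converges strongly and satisfies $\|B_t\| \leq 2^{6n} M$ by Lemma \ref{lemma10} combined with the overlap counts from Proposition \ref{prop6}. Using the decomposition $M_{\psi_{j,t}} A = A M_{\psi_{j,t}} + [M_{\psi_{j,t}}, A]$, the hypothesis $B_{j,t} A M_{\psi_{j,t}} = M_{\psi_{j,t}}$, and the inclusion $\operatorname{supp} \varphi_{j,t} \subseteq \{\psi_{j,t} = 1\}$ (so $\varphi_{j,t}\psi_{j,t} = \varphi_{j,t}$), a direct expansion gives
\[
B_t A = (I - M_\chi) + R_t, \qquad R_t := \sum_{j \geq j_0(t)} M_{\varphi_{j,t}} B_{j,t}[M_{\psi_{j,t}}, A],
\]
where $\chi := \sum_{j < j_0(t)} \varphi_{j,t}$ is compactly supported.

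The crux is to show $\|R_t\| \to 0$ as $t \to 0$. The same pointwise $\ell^p$-estimate used in (ii)$\Rightarrow$(iii) of Proposition \ref{prop8} (Lemma \ref{lemmasum} together with Proposition \ref{prop6}) yields
\[
\|R_t f\|^p \leq (2^{2n})^p M^p \sum_{j=1}^\infty \|[M_{\psi_{j,t}}, A] f\|^p,
\]
and the right-hand side tends to zero uniformly in $\|f\| = 1$ by the $(\psi_{j,t})_j$-analog of Proposition \ref{prop8}(iv); the argument for $(\varphi_{j,t})_j$ cited from \cite[Proposition 11]{Hagger} only uses the uniform equicontinuity and the finite overlap of the cutoffs, both of which the $\psi_{j,t}$ share. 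This transfer from $\varphi_{j,t}$ to $\psi_{j,t}$ is the only real technical point; everything else is bookkeeping.

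To finish, I restrict to $F_\alpha^p$: the assumption $[A, P_\alpha] = 0$ ensures $A$ leaves $F_\alpha^p$ invariant, and Proposition \ref{cpt} makes $P_\alpha M_\chi$ compact. Therefore
\[
P_\alpha B_t|_{F_\alpha^p} \cdot A|_{F_\alpha^p} = I - P_\alpha M_\chi|_{F_\alpha^p} + P_\alpha R_t|_{F_\alpha^p}.
\]
Choosing $t$ so small that $\|P_\alpha R_t|_{F_\alpha^p}\| \leq \tfrac{1}{2}$, the operator $I + P_\alpha R_t|_{F_\alpha^p}$ is invertible with norm at most $2$, and consequently
\[
L := (I + P_\alpha R_t|_{F_\alpha^p})^{-1} P_\alpha B_t|_{F_\alpha^p}
\]
is a left Fredholm regularizer of $A|_{F_\alpha^p}$ satisfying $\|L\| \leq 2 \cdot \|P_\alpha\| \cdot 2^{6n} M = 2^{6n+1} \|P_\alpha\| M$. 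A mirror construction starting from $C_t := \sum_j M_{\psi_{j,t}} C_{j,t} M_{\varphi_{j,t}}$ produces a right regularizer; here the commutator remainder carries the commutator on the left, so I estimate it by passing to the Banach space adjoint, noting by Proposition \ref{prop11}(v) that $A^\ast$ is still band-dominated (on $L_\alpha^q$), and reapplying the previous estimate. Since left and right Fredholm regularizers agree modulo $\mathcal{K}(F_\alpha^p)$, the operator $A|_{F_\alpha^p}$ is Fredholm and $\|(A|_{F_\alpha^p} + \mathcal{K}(F_\alpha^p))^{-1}\| \leq \|L\| \leq 2^{6n+1}\|P_\alpha\|M$.
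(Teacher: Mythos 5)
Your strategy is the same as the paper's -- glue the local one-sided inverses through the partition of unity, estimate the error terms via band-dominatedness, project onto $F_\alpha^p$, and run a Neumann series argument -- but you have swapped the roles of $\varphi_{j,t}$ and $\psi_{j,t}$ around the $B_{j,t}$: the paper takes $B_t = \sum_j M_{\psi_{j,t}} B_{j,t} M_{\varphi_{j,t}}$, whereas you take $B_t = \sum_j M_{\varphi_{j,t}} B_{j,t} M_{\psi_{j,t}}$. The upshot of your choice is a cleaner two-term identity $B_t A = (I - M_\chi) + R_t$ with a single remainder (the paper's identity carries an additional off-band term $\sum_j M_{\psi_{j,t}} B_{j,t} M_{\varphi_{j,t}} A M_{1-\psi_{j,t}}$ handled by Proposition \ref{prop8}(ii)--(iii)). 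The price is that your remainder involves the commutators $[M_{\psi_{j,t}}, A]$ rather than $[M_{\varphi_{j,t}}, A]$, so you cannot directly invoke Proposition \ref{prop8}(iv), which is stated for the $\varphi$'s only. You acknowledge this and claim the $\psi$-analog holds because the proof ``only uses equicontinuity and finite overlap'' -- that is likely true (the $\psi_{j,t}$ are uniformly Lipschitz with constant $O(t)$ and have $4^{2n}$-fold overlap), but the proof of Proposition \ref{prop8}(iv) is only cited from \cite[Proposition 11]{Hagger} and is never displayed here, so the transfer is an unverified assertion rather than a routine deduction. The paper's arrangement is chosen precisely so that the commutator is the one the stated machinery already covers; had you defined $B_t = \sum_j M_{\psi_{j,t}} B_{j,t} M_{\varphi_{j,t}}$ and used $B_{j,t} A M_{\varphi_{j,t}} = B_{j,t} A M_{\psi_{j,t}} M_{\varphi_{j,t}} = M_{\varphi_{j,t}}$, you would again get a two-term identity, now with $[M_{\varphi_{j,t}},A]$, and the entire argument would sit squarely inside the tools provided. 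Everything else in your write-up (the norm bound $\|B_t\| \leq 2^{6n}M$ from Lemma \ref{lemma10} and Proposition \ref{prop6}, the passage to $F_\alpha^p$ via $[A,P_\alpha]=0$ and the compactness of $P_\alpha M_\chi$, the Neumann series yielding the quantitative bound $2^{6n+1}\|P_\alpha\|M$, and the adjoint argument for the right regularizer using Proposition \ref{prop11}(v)) tracks the paper's proof correctly.
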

\begin{proof}
The proof goes similarly to the unit ball case \cite[Proposition 17]{Hagger}. We give a sketch of the proof here: For $t>0$ define an operator
\begin{align*}
B_t := \sum_{j=j_0(t)}^\infty M_{\psi_{j,t}} B_{j,t} M_{\varphi_{j,t}},
\end{align*}
where the series converges strongly $\| B_t\| \leq 2^{6n}M$ by Lemma \ref{lemma10}. Using the identity
\begin{align*}
B_t A &= \sum_{j=j_0(t)}^\infty M_{\psi_{j,t}} B_{j,t} A M_{\varphi_{j,t}} M_{\psi_{j,t}} + \sum_{j=j_0(t)}^\infty M_{\psi_{j,t}} B_{j,t} [M_{\varphi_{j,t}},A]M_{\psi_{j,t}} \\
&\quad \quad \quad + \sum_{j = j_0(t)}^\infty M_{\psi_{j,t}}B_{j,t}M_{\varphi_{j,t}}AM_{1-\psi_{j,t}}
\end{align*}
and some properties of band-dominated operators from Proposition \ref{prop8}, one can show
\begin{align*}
\lim_{t \to 0} \Big \| B_t A - \sum_{j = j_0(t)}^\infty M_{\varphi_{j,t}} \Big \| = 0
\end{align*}
(see \cite[Proposition 17]{Hagger}). With this fact we directly obtain
\begin{align*}
\lim_{t \to 0} \Big \| P_\alpha B_t A|_{F_\alpha^p} - \sum_{j=j_0(t)}^\infty P_\alpha M_{\varphi_{j,t}}|_{F_\alpha^p} \Big \| = 0.
\end{align*}
Now since $\sum_{j=1}^{j_0(t)-1} P_\alpha M_{\varphi_{j,t}}|_{F_\alpha^p}$ is compact (Proposition \ref{cpt}) and
\[\sum_{j=1}^\infty P_\alpha M_{\varphi_{j,t}}|_{F_\alpha^p} = P_\alpha \sum_{j=1}^\infty M_{\varphi_{j,t}}|_{F_\alpha^p} =\Id,\]
$P_\alpha B_t A|_{F_\alpha^p} + \mathcal{K}(F_\alpha^p)$ converges to $\Id + \mathcal{K}(F_\alpha^p)$ in the norm of the Calkin algebra $\mathcal{L}(F_\alpha^p) / \mathcal{K}(F_\alpha^p)$ as $t\to 0$. Using a standard Neumann series argument, we obtain the existence of $B \in \mathcal{L}(F_\alpha^p)$ such that $BA|_{F_\alpha^p} \in \Id + \mathcal{K}(F_\alpha^p)$ and
\begin{align*}
\| B + \mathcal{K}(F_\alpha^p)\| \leq 2 \| P_\alpha\| \| B_t\| \leq 2^{6n+1} \| P_\alpha\| M.
\end{align*}
The other Fredholm regularizer (i.e. $A|_{F_\alpha^p} C \in \Id + \mathcal{K}(F_\alpha^p)$ for some $C \in \mathcal{L}(F_\alpha^p)$) can be obtained similarly, defining operators $C_t$:
\begin{align*}
C_t := \sum_{j=j_0(t)}^\infty M_{\varphi_{j,t}} C_{j,t} M_{\psi_{j,t}}.
\end{align*} 
Using that $A^\ast \in \BDO_\alpha^q$ one can analogously show
\begin{align*}
\lim_{t \to 0} \Big \| AC_t - \sum_{j = j_0(t)}^\infty M_{\varphi_{j,t}} \Big \| = \lim_{t \to 0} \Big \| C_t^\ast A^\ast - \sum_{j=j_0(t)}^\infty M_{\varphi_{j,t}} \Big \| = 0
\end{align*}
and conclude again, using $[A, P_\alpha] = 0$, that
\begin{align*}
\lim_{t \to 0} \Big \| AP_\alpha C_t|_{F_\alpha^p} - \sum_{j = j_0(t)}^\infty P_\alpha M_{\varphi_{j,t}}|_{F_\alpha^p} \Big \| = 0.
\end{align*}
Now proceed as in the first case.
\end{proof}

\section{Limit operators}

In this section we show our main result. As in Section 3, we use some techniques from limit operator theory on sequence spaces (see \cite{Lindner, Rabinovich_Roch_Silbermann_2}). The construction in Lemma \ref{lmm30} is due to Lindner and Seidel \cite{Lindner_Seidel}.

For each $z \in \mathbb{C}^n$ consider the weighted shift operators $C_z: L_\alpha^p \to L_\alpha^p$ given by
\begin{align*}
(C_z f)(w) = f(w - z)e^{\alpha \langle w, z\rangle - \frac{\alpha}{2} |z|^2}.
\end{align*}
$C_z$ is an isometry from $L_\alpha^p$ onto itself and from $F_\alpha^p$ onto itself for each $z \in \mathbb{C}^n$. Also, $C_z^{-1} = C_{-z}$. By $\widetilde{C}_z$ we will denote the restriction to $F_\alpha^p$. It is easy to verify that the adjoint of $\widetilde{C}_z: F_\alpha^p \to F_\alpha^p$ (in the sense of Proposition \ref{dualityfock}) is given by $\widetilde{C}_{-z}:F_\alpha^q \to F_\alpha^q$, where $q$ is the dual exponent of $p$. For an operator $A \in \mathcal{L}(F_\alpha^p)$ and $z \in \mathbb{C}^n$ we define the shifted operator $A_z$ by
\begin{align*}
A_z = \widetilde{C}_z A \widetilde{C}_{-z}.
\end{align*}
Let $\mathcal{M}$ denote the maximal ideal space of $\BUC(\mathbb{C}^n)$, the unital $C^\ast$-algebra of bounded and uniformly continuous functions on $\mathbb{C}^n$, where $\mathcal{M}$ is equipped with the weak-$^\ast$ topology. We consider $\mathbb{C}^n$ as a subset of $\mathcal{M}$ by identifying each $z \in \mathbb{C}^n$ with the functional of point evaluation at $z$, $\delta_z: f \mapsto f(z)$. In this sense, $\mathbb{C}^n$ is known to be a dense subspace of $\mathcal{M}$. If $A \in \mathcal{T}_{p,\alpha}$ and $(z_\gamma)$ is a net in $\mathbb{C}^n$ converging to $x \in \mathcal{M} \setminus \mathbb{C}^n$, then $A_{z_\gamma}$ is known to converge $\ast$-strongly to some limit operator, denoted by $A_x$, which does not depend on the particular choice of the net $(z_\gamma)$ \cite[Corollary 5.4]{Bauer_Isralowitz}.

In the following we will denote by $\tau_z: \mathbb{C}^n \to \mathbb{C}^n$ the function $w \mapsto w - z$ for each $z \in \mathbb{C}^n$. For later reference we collect a few results in the following lemma.

\begin{lmm}\label{lmm20}
\begin{enumerate}
\item For $f \in L^\infty(\mathbb{C}^n)$ and $z \in \mathbb{C}^n$ it is
\begin{align*}
C_z M_f C_{-z} = M_{f \circ \tau_z}.
\end{align*} 
\item For $z \in \mathbb{C}^n$ it is
\begin{align*}
P_\alpha C_z = \widetilde{C}_z P_\alpha.
\end{align*}
\item For $f \in L^\infty(\mathbb{C}^n)$ and $z \in \mathbb{C}^n$ it is
\begin{align*}
(T_f)_z =  T_{f \circ \tau_z}.
\end{align*}
\end{enumerate}
\end{lmm}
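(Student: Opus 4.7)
My plan is to handle (i) by a direct substitution, (ii) by a change of variables in the integral defining $P_\alpha$, and then (iii) by combining (i) and (ii).

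For (i), I would apply the left-hand side to an arbitrary $g \in L_\alpha^p$ and evaluate at $w \in \mathbb{C}^n$. Unwinding the definition, $(C_{-z}g)(u) = g(u+z)e^{-\alpha\langle u, z\rangle - \frac{\alpha}{2}|z|^2}$, so multiplication by $f$ and then application of $C_z$ yields
\[
(C_z M_f C_{-z} g)(w) = f(w-z)\, g(w)\, e^{-\alpha\langle w-z, z\rangle - \frac{\alpha}{2}|z|^2}\, e^{\alpha\langle w, z\rangle - \frac{\alpha}{2}|z|^2}.
\]
Using linearity of $\langle\cdot,\cdot\rangle$ in the first slot and $\langle z, z\rangle = |z|^2$, the two exponentials collapse to $1$, leaving $(M_{f\circ\tau_z}g)(w)$.

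For (ii), I would apply both sides to $f \in L_\alpha^p$ and evaluate at $u \in \mathbb{C}^n$. On the left, $(P_\alpha C_z f)(u)$ is the integral of $e^{\alpha\langle u, w\rangle}\,(C_z f)(w)$ against $d\mu_\alpha(w)$. Substituting $v = w-z$ and using $|v+z|^2 = |v|^2 + 2\operatorname{Re}\langle v, z\rangle + |z|^2$ together with the identity
\[
\alpha\langle v, z\rangle - 2\alpha\operatorname{Re}\langle v, z\rangle = -\alpha\overline{\langle v, z\rangle} = -\alpha\langle z, v\rangle,
\]
the exponent in the integrand rearranges to $\alpha\langle u-z, v\rangle - \alpha|v|^2$ plus the $v$-independent factor $\alpha\langle u, z\rangle - \frac{\alpha}{2}|z|^2$. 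The integral is then precisely $(P_\alpha f)(u-z)$, and the surviving exponential prefactor is exactly the one appearing in the definition of $C_z$ applied to $P_\alpha f$. Since $P_\alpha f \in F_\alpha^p$, this gives $(\widetilde{C}_z P_\alpha f)(u)$, as required.

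For (iii), I would simply chain the previous two identities: writing $(T_f)_z = \widetilde{C}_z P_\alpha M_f \widetilde{C}_{-z}$ and replacing $\widetilde{C}_z P_\alpha$ by $P_\alpha C_z$ using (ii) gives
\[
(T_f)_z = P_\alpha\, C_z M_f C_{-z}\big|_{F_\alpha^p} = P_\alpha M_{f\circ\tau_z}\big|_{F_\alpha^p} = T_{f\circ\tau_z}
\]
via (i). The main obstacle is the calculation in (ii): one must track carefully which inner products are linear versus antilinear (so that $\langle v, z\rangle$ and $\langle z, v\rangle$ are not confused), and use $\langle v, z\rangle + \langle z, v\rangle = 2\operatorname{Re}\langle v, z\rangle$ to consolidate the mixed terms. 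Once the bookkeeping is done, (i) and (iii) are essentially one-line consequences.
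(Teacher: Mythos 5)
Your proof is correct and follows essentially the same route as the paper: a direct unwinding of the shift operator for (i), a change of variables $v = w - z$ in the integral kernel of $P_\alpha$ for (ii), and composing the two identities for (iii). The exponential bookkeeping (including the careful handling of $\langle v, z\rangle$ versus $\langle z, v\rangle$ and $2\operatorname{Re}\langle v, z\rangle = \langle v, z\rangle + \langle z, v\rangle$) matches the paper's calculation, which uses the same substitution.
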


\begin{proof}
$(iii)$ is a direct consequence of $(i)$ and $(ii)$. For $(i)$, observe that for $g \in L_\alpha^p$ and $w \in \mathbb{C}^n$ it is
\begin{align*}
(C_{z} M_f C_{-{z}} g)(w) &= e^{\alpha \langle w,{z}\rangle - \frac{\alpha}{2}|{z}|^2} (M_f C_{-{z}} g) (w - z)\\
&= e^{\alpha \langle w, z \rangle - \frac{\alpha}{2}|z|^2} f(w - z) (C_{-z}g)(w - z)\\
&= f(w-z)g(w)
\end{align*}
and hence
\begin{align*}
C_{z} M_f C_{-z} = M_{f \circ \tau_{z}}
\end{align*}
for every $z \in \mathbb{C}^n$. $(ii)$ holds since
\begin{align*}
(P_\alpha C_z g)(w) &= \int_{\mathbb{C}^n}e^{\alpha \langle w,u\rangle} (C_z g)(u) d\mu_\alpha (u)\\
&= \Big ( \frac{\alpha}{\pi}\Big )^n\int_{\mathbb{C}^n} e^{\alpha \langle w,u\rangle + \alpha \langle u,z\rangle - \frac{\alpha}{2}|z|^2}g(u - z)e^{-\alpha |u|^2}du\\
&= \Big ( \frac{\alpha}{\pi}\Big )^n \int_{\mathbb{C}^n} e^{\alpha \langle w, v+z\rangle + \alpha \langle v+z,z\rangle - \frac{\alpha}{2}|z|^2} g(v) e^{-\alpha |v+z|^2}dv\\
&= e^{\alpha \langle w,z\rangle -\frac{\alpha}{2}|z|^2} \int_{\mathbb{C}^n} e^{\alpha \langle w-z,v\rangle}g(v) d\mu_\alpha(v)\\
&= (\widetilde{C}_z P_\alpha g)(w)
\end{align*}
for all $w \in \mathbb{C}^n$ and $g \in L^p_{\alpha}$. 
\end{proof}
\begin{prop}\label{prop22}
Let $A \in \mathcal{T}_{p,\alpha}$ and let $(z_\gamma)$ be a net in $\mathbb{C}^n$ converging to $x \in \mathcal{M} \setminus \mathbb{C}^n$ such that $A_{x}$ is invertible. Let $f \in L^\infty (\mathbb{C}^n)$ be with compact support. Then there is a $\gamma_0$ such that for all $\gamma \geq \gamma_0$ there are operators $B_\gamma, D_\gamma \in \mathcal{L}(L_\alpha^p)$ satisfying
\begin{align*}
\| B_\gamma \|, \| D_\gamma\| \leq 2(\| A_{x}^{-1}\| \|P_\alpha \| + \| Q_\alpha\|)
\end{align*}
and
\begin{align*}
B_\gamma \hat{A} M_{f \circ \tau_{-z_\gamma}} = M_{f \circ \tau_{-z_\gamma}} = M_{f \circ \tau_{-z_\gamma}} \hat{A} D_\gamma.
\end{align*}
\end{prop}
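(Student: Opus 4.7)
The plan is to conjugate by the shifts to move the equation to the origin, localize by a compactly supported characteristic function, and solve the localized equation by a Neumann-series argument. First, Lemma~\ref{lmm20}(i),(ii) together with $Q_\alpha = \Id - P_\alpha$ yield the commutation identities $\hat A C_{-z_\gamma} = C_{-z_\gamma}\widehat{A_{z_\gamma}}$ and $C_{z_\gamma}\hat A = \widehat{A_{z_\gamma}} C_{z_\gamma}$, where $\widehat{A_{z_\gamma}} := A_{z_\gamma} P_\alpha + Q_\alpha$. Since the $C_{\pm z_\gamma}$ are isometries and $M_{f\circ\tau_{-z_\gamma}} = C_{-z_\gamma} M_f C_{z_\gamma}$, the problem reduces to producing $E_\gamma, F_\gamma \in \mathcal{L}(L_\alpha^p)$ satisfying the norm bound of the proposition and
\begin{align*}
E_\gamma\widehat{A_{z_\gamma}} M_f = M_f = M_f\widehat{A_{z_\gamma}} F_\gamma,
\end{align*}
after which $B_\gamma := C_{-z_\gamma} E_\gamma C_{z_\gamma}$ and $D_\gamma := C_{-z_\gamma} F_\gamma C_{z_\gamma}$ satisfy the required identities with the same norms.

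To construct $E_\gamma$ and $F_\gamma$, I would pick a compact $D \supseteq \operatorname{supp}(f)$ and set $P := M_{\chi_D}$, a norm-one idempotent on $L_\alpha^p$ with $PM_f = M_f P = M_f$. A short computation using $\widehat{A_x^{-1}} = A_x^{-1} P_\alpha + Q_\alpha$, $P_\alpha Q_\alpha = 0$, and $A_x^{-1} A_x = \Id$ on $F_\alpha^p$ gives
\begin{align*}
P\widehat{A_x^{-1}}\widehat{A_{z_\gamma}} P - P &= P A_x^{-1}(A_{z_\gamma}-A_x) P_\alpha P =: S_\gamma,\\
P\widehat{A_{z_\gamma}}\widehat{A_x^{-1}} P - P &= P (A_{z_\gamma}-A_x) A_x^{-1} P_\alpha P =: T_\gamma.
\end{align*}
Proposition~\ref{cpt} guarantees $P_\alpha P = P_\alpha M_{\chi_D}$ is compact, and since $A_{z_\gamma} \to A_x$ strongly on $F_\alpha^p$, the standard fact that a strongly null net composed with a compact operator tends to zero in norm delivers $\|S_\gamma\|,\|T_\gamma\| \to 0$. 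I would then choose $\gamma_0$ so that $\|S_\gamma\|,\|T_\gamma\| \leq 1/2$ for all $\gamma \geq \gamma_0$.

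For such $\gamma$, both $P + S_\gamma$ and $P + T_\gamma$ annihilate $\operatorname{Im}(\Id - P)$ and take values in $\operatorname{Im}(P)$, so their restrictions to $\operatorname{Im}(P)$ are Neumann-invertible with inverses $U_\gamma, V_\gamma$ of norm at most $2$. Extending via $\widetilde U_\gamma := U_\gamma P$ and $\widetilde V_\gamma := V_\gamma P$ yields operators in $\mathcal{L}(L_\alpha^p)$ of norm $\leq 2$ satisfying the global identities $\widetilde U_\gamma(P+S_\gamma) = P$ and $(P+T_\gamma)\widetilde V_\gamma = P$. With $E_\gamma := \widetilde U_\gamma P\widehat{A_x^{-1}}$ and $F_\gamma := \widehat{A_x^{-1}} P\widetilde V_\gamma$, the relations $PM_f = M_f P = M_f$ then give
\begin{align*}
E_\gamma\widehat{A_{z_\gamma}} M_f &= \widetilde U_\gamma\bigl(P\widehat{A_x^{-1}}\widehat{A_{z_\gamma}} P\bigr) M_f = \widetilde U_\gamma(P + S_\gamma) M_f = P M_f = M_f,\\
M_f\widehat{A_{z_\gamma}} F_\gamma &= M_f\bigl(P\widehat{A_{z_\gamma}}\widehat{A_x^{-1}} P\bigr)\widetilde V_\gamma = M_f(P+T_\gamma)\widetilde V_\gamma = M_f P = M_f,
\end{align*}
while $\|E_\gamma\|,\|F_\gamma\| \leq 2\|\widehat{A_x^{-1}}\| \leq 2(\|A_x^{-1}\|\|P_\alpha\| + \|Q_\alpha\|)$. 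The main obstacle is the norm convergence of $S_\gamma$ and $T_\gamma$: this is the single place where the merely strong convergence $A_{z_\gamma}\to A_x$ must be upgraded to norm convergence, and it depends essentially on the compactness of $P_\alpha M_{\chi_D}$ supplied by Proposition~\ref{cpt}.
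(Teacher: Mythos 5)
Your proposal is correct and follows essentially the same route as the paper: the key ingredients are the identity $\widehat{A_x}^{-1} = A_x^{-1}P_\alpha + Q_\alpha$, the compactness of $P_\alpha M_{\chi_D}$ from Proposition~\ref{cpt}, and a Neumann-series perturbation argument, with the shift operators $C_{\pm z_\gamma}$ used to move the problem back and forth between $0$ and $z_\gamma$. The only noteworthy variations are cosmetic: the paper conjugates at the end rather than the beginning, and it runs the Neumann series directly on $L_\alpha^p$ for $\Id + R_\gamma$ with $R_\gamma := \widehat{A_x}^{-1}(\widehat{A_{z_\gamma}}-\widehat{A_x})M_{\chi_{B(0,R)}}$, avoiding your restriction to $\operatorname{Im}(P)$; your extra left factor of $P$ in $T_\gamma$ does have the small virtue of placing the compact operator on the right so that strong convergence of $A_{z_\gamma}\to A_x$ suffices for the $D_\gamma$ direction, whereas the paper's version $M_{\chi_{B(0,R)}}P_\alpha(\widetilde{C}_{z_\gamma}A\widetilde{C}_{-z_\gamma}-A_x)P_\alpha$ has the compact operator on the left and therefore relies on the $\ast$-strong convergence supplied by Corollary~5.4 of Bauer--Isralowitz.
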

\begin{proof}
The proof is similar to the proof of \cite[Proposition 19]{Hagger}. Let $R > 0$ such that $\operatorname{supp} f \subset B(0,R)$. $P_\alpha M_{\chi_{B(0,R)}}$ is compact by Proposition \ref{cpt} and therefore it follows
\begin{align*}
\Big \| \Big ( C_{z_\gamma} (AP_\alpha + Q_\alpha)C_{-z_\gamma} &- (A_{x} P_\alpha + Q_\alpha) \Big ) M_{\chi_{B(0,R)}} \Big \|\\
&= \Big \| \Big ( C_{z_\gamma} AP_\alpha C_{-z_\gamma} - A_{x} P_\alpha) M_{\chi_{B(0,R)}} \Big \|\\
&= \Big \| (\widetilde{C}_{z_\gamma} A \widetilde{C}_{-z_\gamma} - A_{x}) P_\alpha M_{\chi_{B(0,R)}} \Big \|\\
& \to 0
\end{align*}
for $z_\gamma \to x$, where we also used $C_{z_\gamma} Q_\alpha C_{-z_\gamma} = Q_\alpha$ (which is a consequence of Lemma \ref{lmm20} $(ii)$). Therefore there exists a $\gamma_0$ such that
\begin{align*}
R_\gamma :&= (A_{x}^{-1} P_\alpha + Q_\alpha)\Big ( C_{z_\gamma} ( AP_\alpha + Q_\alpha) C_{-z_\gamma} - (A_{x} P_\alpha + Q_\alpha) \Big ) M_{\chi_{B(0,R)}}\\
&= (A_{x}^{-1} P_\alpha + Q_\alpha)C_{z_\gamma} (AP_\alpha + Q_\alpha)C_{-z_\gamma}M_{\chi_{B(0,R)}} - M_{\chi_{B(0,R)}}
\end{align*}
fulfills $\| R_\gamma\| < \frac{1}{2}$ for $\gamma \geq \gamma_0$. Here we used that $A_{x}^{-1}P_\alpha + Q_\alpha$ is the inverse of $A_{x}P_\alpha + Q_\alpha$. In particular, $\Id + R_\gamma \in \mathcal{L}(L_\alpha^p)$ is invertible for all $\gamma \geq \gamma_0$. Multiplying $R_\gamma$ by $M_f$ yields
\begin{align*}
(A_{x}^{-1} P_\alpha + Q_\alpha)C_{z_\gamma} (AP_\alpha + Q_\alpha)C_{-z_\gamma} M_f = (\Id + R_\gamma) M_f
\end{align*}
and thus
\begin{align*}
M_f = (\Id + R_\gamma)^{-1} (A_{x}^{-1} P_\alpha + Q_\alpha)C_{z_\gamma} (AP_\alpha + Q_\alpha)C_{-z_\gamma}M_f.
\end{align*}
Multiplying by $C_{-z_\gamma}$ from the left and $C_{z_\gamma}$ from the right and using Lemma \ref{lmm20} $(i)$ gives
\begin{align*}
C_{-z_\gamma}(\Id + R_\gamma)^{-1} (A_{x}^{-1} P_\alpha + Q_\alpha)C_{z_\gamma} (AP_\alpha + Q_\alpha)M_{f\circ \tau_{-z_\gamma}}= M_{f \circ \tau_{-z_\gamma}},
\end{align*}
and the claimed norm estimate follows easily with
\begin{align*}
B_\gamma := C_{-z_\gamma}(\Id + R_\gamma)^{-1}(A_{x}^{-1} P_\alpha + Q_\alpha)C_{z_\gamma}.
\end{align*}
The result for $D_\gamma$ can be derived similarly: Since $M_{\chi_{B(0,R)}} P_\alpha$ is also compact,
\begin{align*}
\Big \| M_{\chi_{B(0,R)}} \Big ( C_{z_\gamma} (A P_\alpha + Q_\alpha) C_{-z_\gamma} &- (A_{x} P_\alpha + Q_\alpha) \Big ) \Big \|\\
&= \Big \| M_{\chi_{B(0,R)}} P_\alpha ( C_{z_\gamma} A C_{-z_\gamma} - A_{x} ) P_\alpha \Big \|\\
& \to 0
\end{align*}
for $z_\gamma \to x$. Therefore
\begin{align*}
S_\gamma:= M_{\chi_{B(0,R)}} \Big ( C_{z_\gamma} (AP_\alpha + Q_\alpha) C_{-z_\gamma} - (A_{x} P_\alpha + Q_\alpha) \Big ) (A_{x}^{-1} P_\alpha + Q_\alpha)
\end{align*}
has norm $< \frac{1}{2}$ for large $\gamma$ and we get
\begin{align*}
M_{f \circ \tau_{-z_\gamma}} (AP_\alpha + Q_\alpha) D_\gamma = M_{f \circ \tau_{-z_\gamma}}
\end{align*}
with
\[ 
D_\gamma := C_{-z_\gamma} (A_{x}^{-1} P_\alpha + Q_\alpha) (\Id + S_\gamma)^{-1}C_{z_\gamma}.  \qedhere
 \]
\end{proof}
We get the following theorem:
\begin{thm}\label{thm15}
If $A \in \mathcal{T}_{p,\alpha}$ is such that $A_x$ is invertible for every $x \in \mathcal{M} \setminus \mathbb{C}^n$ and $\displaystyle \sup_{x \in \mathcal{M} \setminus \mathbb{C}^n} \| A_x^{-1} \| < \infty$, then $A$ is Fredholm.
\end{thm}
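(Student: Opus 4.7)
The plan is to reduce Theorem \ref{thm15} to the Fredholm criterion Proposition \ref{prop21} applied to the canonical extension $\hat{A} = AP_\alpha + Q_\alpha \in \BDO_\alpha^p$. The extension lives in $\BDO_\alpha^p$ by the Theorem about Toeplitz operators being band-dominated (and since $P_\alpha \in \BDO_\alpha^p$), and a direct check gives $[\hat{A}, P_\alpha] = 0$: indeed $\hat{A}P_\alpha = AP_\alpha$ because $Q_\alpha P_\alpha = 0$, while $P_\alpha \hat{A} = P_\alpha A P_\alpha = AP_\alpha$ since $A$ maps into $F_\alpha^p$. Once Proposition \ref{prop21} gives that $\hat{A}|_{F_\alpha^p}$ is Fredholm, we are done because $\hat{A}|_{F_\alpha^p} = A$.

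So the heart of the argument is to verify the hypothesis of Proposition \ref{prop21}. Set $M_0 := \sup_{x \in \mathcal{M} \setminus \mathbb{C}^n} \| A_x^{-1} \| < \infty$ and $M := 2(M_0 \| P_\alpha \| + \| Q_\alpha \|)$. Fix $t > 0$. The function $\psi_{j,t}$ is a translate of a fixed compactly supported function: writing $\zeta_j := -\sigma_j/t$ and $f_t(w) := \psi_0(tw)$, a direct computation yields $\psi_{j,t} = f_t \circ \tau_{-\zeta_j}$, where $f_t$ has compact support (depending on $t$ but not on $j$). For any particular $x \in \mathcal{M} \setminus \mathbb{C}^n$ and any net $z_\gamma \to x$ in $\mathbb{C}^n$, Proposition \ref{prop22} applied to $f_t$ produces, for all sufficiently large $\gamma$, operators $B_\gamma, D_\gamma$ of norm at most $2(\| A_x^{-1} \| \| P_\alpha \| + \| Q_\alpha \|) \leq M$ satisfying the identities $B_\gamma \hat{A} M_{f_t \circ \tau_{-z_\gamma}} = M_{f_t \circ \tau_{-z_\gamma}} = M_{f_t \circ \tau_{-z_\gamma}} \hat{A} D_\gamma$.

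The main obstacle is that we need the index $j_0(t)$ in Proposition \ref{prop21} to be uniform in $j$, whereas Proposition \ref{prop22} is a statement along a net converging to a specific $x$. I would resolve this by a contradiction argument using compactness of $\mathcal{M}$. Suppose no such $j_0(t)$ exists for the left-invertibility clause. Then there is a subsequence $(j_k)_k$ along which no operator of norm at most $M$ realizes $B\hat{A}M_{\psi_{j_k,t}} = M_{\psi_{j_k,t}}$. Since $|\zeta_{j_k}| \to \infty$, compactness of $\mathcal{M}$ produces a subnet $(\zeta_{j_{k_\lambda}})_\lambda$ converging to some $x \in \mathcal{M} \setminus \mathbb{C}^n$. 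Applying Proposition \ref{prop22} to this subnet and to $f_t$ yields, for $\lambda$ large, operators $B_\lambda$ of norm at most $M$ with $B_\lambda \hat{A} M_{\psi_{j_{k_\lambda},t}} = M_{\psi_{j_{k_\lambda},t}}$, which is the desired contradiction. The analogous argument, exchanging the roles of $B_\gamma$ and $D_\gamma$, handles the right-invertibility clause, and taking the maximum of the two thresholds gives a single $j_0(t)$.

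With this uniform $j_0(t)$ in hand, the hypotheses of Proposition \ref{prop21} are satisfied for $\hat{A}$ with the constant $M$, whence $\hat{A}|_{F_\alpha^p} = A$ is Fredholm. In fact the argument also yields the quantitative estimate $\|(A + \mathcal{K}(F_\alpha^p))^{-1}\| \leq 2^{6n+1} \| P_\alpha \| M$, although only the Fredholm conclusion is asserted in the theorem. The only real input beyond the machinery already developed is the combination of the uniform boundedness hypothesis on the inverses of the limit operators with the compactness of $\mathcal{M}$; everything else is bookkeeping with the translations $\tau_z$ and the scaled partition of unity $\{\varphi_{j,t}, \psi_{j,t}\}$.
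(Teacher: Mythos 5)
Your argument is correct and is essentially the paper's proof in disguise: the same combination of Proposition \ref{prop21}, Proposition \ref{prop22}, and compactness of $\mathcal{M}$, only with the contradiction scoped at the level of verifying Proposition \ref{prop21}'s hypothesis for each fixed $t$ rather than at the top level (assume $A$ not Fredholm), and with $\psi_{j,t}$ written directly as a translate of $f_t = \psi_1(t\,\cdot)$ instead of being covered by a ball $B(w_{j_m},R)$ as the paper does. The minor care you take in splitting into left and right clauses and taking the maximum of the two thresholds is the same bookkeeping the paper handles by the remark ``both cases can be dealt with in the same way.''
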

\begin{proof}
The proof works entirely as in \cite[Theorem 20]{Hagger}. For the readers convenience we reproduce it here.

Assume $A$ is not Fredholm. One easily sees that $[\hat{A}, P_\alpha] = 0$. By Proposition \ref{prop21}, there exists a strictly increasing sequence $(j_m)_{m \in \mathbb{N}}$ and some $t > 0$ with
\begin{align*}
B\hat{A}M_{\psi_{j_m,t}} \neq M_{\psi_{j_m,t}}
\end{align*}
or
\begin{align*}
M_{\psi_{j_m,t}} \hat{A}B \neq M_{\psi_{j_m,t}}
\end{align*}
for all $m \in \mathbb{N}$ and all $B \in \mathcal{L}(L_\alpha^p)$ with $\displaystyle \| B\| \leq 2\Big ( \sup_{x \in \mathcal{M} \setminus \mathbb{C}^n} \| A_x^{-1}\| \| P_\alpha\| + \| Q_\alpha\| \Big )$. Since both cases can be dealt with in the same way, we may assume
\begin{align*}
B\hat{A}M_{\psi_{j_m,t}} \neq M_{\psi_{j_m,t}}.
\end{align*}
As $\operatorname{diam}(\operatorname{supp} \psi_{j,t}) \leq \frac{12\sqrt{2n}}{t} =: R$ for all $j \in \mathbb{N}$ by definition of $\psi_{j,t}$ and Proposition \ref{prop6} $(iii)$, there is a sequence $(w_{j_m})_{m \in \mathbb{N}}$ with $|w_{j_m}| \to \infty$ such that
\begin{align*}
\operatorname{supp} \psi_{j_m,t} \subseteq B(w_{j_m}, R).
\end{align*}
By the compactness of $\mathcal{M}$ we may choose a convergent subnet $(w_\gamma)$ of $(w_{j_m})$ such that $(-w_\gamma)$ converges to some $y \in \mathcal{M} \setminus \mathbb{C}^n$. By Proposition \ref{prop22} there is a $\gamma_0$ such that for each $\gamma \geq \gamma_0$ there is an operator $B_\gamma \in \mathcal{L}(L_\alpha^p)$ with $\| B_\gamma\| \leq 2 (\| A_{y}^{-1}\| \| P_\alpha\| + \| Q_\alpha\|)$ and
\begin{align*}
B_\gamma \hat{A} M_{\chi_{B(w_\gamma, R)}} = B_\gamma \hat{A} M_{\chi_{B(0, R)} \circ \tau_{w_\gamma}} = M_{\chi_{B(0, R)} \circ \tau_{w_\gamma}} =  M_{\chi_{B(w_\gamma, R)}},
\end{align*} 
which is a contradiction.
\end{proof}

We will need the following proposition:
\begin{prop}\label{compactconv}
Let $A \in \mathcal{T}_{p,\alpha}$ be compact and $(z_\gamma)$ be a net in $\mathbb{C}^n$ converging to $x \in \mathcal{M} \setminus \mathbb{C}^n$. Then $A_{z_\gamma}$ converges $\ast$-strongly to 0.
\end{prop}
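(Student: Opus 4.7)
The plan is to exploit that $\widetilde{C}_{z_\gamma}$ is an isometry on $F_\alpha^p$ (and on $F_\alpha^q$) to reduce $\ast$-strong convergence of $A_{z_\gamma} = \widetilde{C}_{z_\gamma} A \widetilde{C}_{-z_\gamma}$ to showing that $\|A \widetilde{C}_{-z_\gamma} f\|_{p,\alpha} \to 0$ for every $f \in F_\alpha^p$, and analogously $\|A^\ast \widetilde{C}_{-z_\gamma} g\|_{q,\alpha} \to 0$ for every $g \in F_\alpha^q$ (using that $(A_{z_\gamma})^\ast = (A^\ast)_{z_\gamma}$, which is immediate from $(\widetilde{C}_z)^\ast = \widetilde{C}_{-z}$). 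Since $A$ and $A^\ast$ are compact and the spaces $F_\alpha^p$, $F_\alpha^q$ are reflexive, a standard subnet argument reduces matters to showing that $\widetilde{C}_{-z_\gamma} f \rightharpoonup 0$ weakly in $F_\alpha^p$ for every $f \in F_\alpha^p$, and similarly in $F_\alpha^q$.

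To verify this weak convergence, I test against the dense span of reproducing kernels $K_w(v) = e^{\alpha\langle v, w\rangle}$ inside $F_\alpha^q$. Using the identity $|w + z_\gamma|^2 = |w|^2 + 2\operatorname{Re}\langle w, z_\gamma\rangle + |z_\gamma|^2$, a direct computation gives
\[ |\langle \widetilde{C}_{-z_\gamma} f, K_w\rangle_\alpha| = |(\widetilde{C}_{-z_\gamma} f)(w)| = e^{\alpha|w|^2/2}\, |f(w + z_\gamma)|\, e^{-\alpha|w + z_\gamma|^2/2}. \]
Now $z_\gamma \to x \in \mathcal{M}\setminus\mathbb{C}^n$ forces $|z_\gamma| \to \infty$, because otherwise a subnet would stay inside a compact subset of $\mathbb{C}^n$ and converge to a point of $\mathbb{C}^n \subset \mathcal{M}$, contradicting $x \notin \mathbb{C}^n$. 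Hence the right-hand side will tend to $0$ once I establish the pointwise decay $|f(u)|e^{-\alpha|u|^2/2} \to 0$ as $|u|\to\infty$ for $f \in F_\alpha^p$. A routine $\varepsilon/3$-argument, using the boundedness $\|\widetilde{C}_{-z_\gamma}f\|_{p,\alpha} = \|f\|_{p,\alpha}$, then promotes pointwise testing on $\{K_w\}$ to weak convergence against all of $F_\alpha^q$.

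The main obstacle is therefore the pointwise decay lemma above. I plan to obtain it from a sub-mean-value estimate applied to the auxiliary entire function $v \mapsto f(v)\, e^{-\alpha\langle v, u\rangle + \alpha|u|^2/2}$, whose value at $v = u$ equals $f(u)\, e^{-\alpha|u|^2/2}$. Expanding the cross term via $2\operatorname{Re}\langle v, u\rangle = |v|^2 + |u|^2 - |v-u|^2$ and using $|v-u| \leq 1$ on $B(u,1)$ produces a bound of the form
\[ |f(u)|^p e^{-p\alpha|u|^2/2} \leq C \int_{B(u,1)} |f(v)|^p e^{-p\alpha|v|^2/2}\, dv, \]
whose right-hand side vanishes as $|u|\to\infty$ because $|f|^p e^{-p\alpha|\cdot|^2/2}$ is integrable with respect to Lebesgue measure by the very definition of $\|f\|_{p,\alpha}$. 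The adjoint side of $\ast$-strong convergence is handled identically by replacing $A$ with the compact operator $A^\ast$ and $p$ with $q$, which finishes the proof.
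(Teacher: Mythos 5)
Your argument is correct, but it is a genuinely different route from what the paper does: the paper simply cites Bauer--Isralowitz for this fact, whereas you give a self-contained proof. Your strategy --- use the isometry $\widetilde{C}_{z_\gamma}$ to reduce the $\ast$-strong convergence $A_{z_\gamma}\to 0$ to $\|A\widetilde{C}_{-z_\gamma}f\|\to 0$ and $\|A^\ast\widetilde{C}_{-z_\gamma}g\|\to 0$, then exploit compactness of $A$ to reduce this to the weak convergence $\widetilde{C}_{-z_\gamma}f\rightharpoonup 0$, and finally verify the weak convergence by testing against reproducing kernels and invoking the pointwise decay $|f(u)|e^{-\alpha|u|^2/2}\to 0$ for $f\in F_\alpha^p$ --- is sound. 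All the individual steps check out: $(A_{z_\gamma})^\ast = (A^\ast)_{z_\gamma}$ follows from $(\widetilde{C}_z)^\ast=\widetilde{C}_{-z}$; a compact operator sends a norm-bounded weakly null net to a norm-null net (your net is uniformly bounded since $\widetilde{C}_{-z_\gamma}$ is isometric); $|z_\gamma|\to\infty$ does follow from the subnet argument you sketch; the identity $\langle\widetilde{C}_{-z_\gamma}f,K_w\rangle_\alpha=(\widetilde{C}_{-z_\gamma}f)(w)$ is the reproducing property together with Proposition~\ref{propzhu1}; the span of the kernels is dense in $F_\alpha^q$; the $\varepsilon/3$-extension uses the boundedness of the dual pairing from Proposition~\ref{dualityfock}; and the sub-mean-value estimate for $|g|^p$ with $g(v)=f(v)e^{-\alpha\langle v,u\rangle+\alpha|u|^2/2}$ correctly produces $|f(u)|^p e^{-p\alpha|u|^2/2}\le C\int_{B(u,1)}|f(v)|^pe^{-p\alpha|v|^2/2}\,dv$, whose right side vanishes by absolute continuity of the integral. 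What your proof buys, beyond being self-contained, is that it works for \emph{any} compact $A\in\mathcal{L}(F_\alpha^p)$, not merely $A\in\mathcal{T}_{p,\alpha}$ --- the Toeplitz-algebra hypothesis is never used. The paper's citation, of course, is shorter and avoids re-proving the Fock-space decay lemma.
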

\begin{proof}
This is the statement of \cite[Theorem 1.1 and Lemma 6.1]{Bauer_Isralowitz}.
\end{proof}
The following theorem provides the converse of the previous theorem:
\begin{thm}\label{thm24}
Let $A \in \mathcal{L}(F_\alpha^p)$ be Fredholm. Let $(z_\gamma)$ be a net in $\mathbb{C}^n$ converging to $x \in \mathcal{M} \setminus \mathbb{C}^n$ such that $A_{z_\gamma}$ converges $\ast$-strongly to $A_x\in \mathcal{L}(F_\alpha^p)$. Then $A_x$ is invertible with $\| A_x^{-1}\| \leq \| \big (A + \mathcal{K}(F_\alpha^p)\big )^{-1} \|$. If further $B$ is a Fredholm regularizer of $A$, then $B_{z_\gamma}$ converges $\ast$-strongly to $A_x^{-1}$ as $z_\gamma \to x$.
\end{thm}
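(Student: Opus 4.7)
The plan is to work with a Fredholm regularizer $B$ of $A$, shift the regularizer identities, and exploit that shifts of compact operators vanish $\ast$-strongly. The starting point is that $A$ being Fredholm means there exist $B \in \mathcal{L}(F_\alpha^p)$ and $K_1, K_2 \in \mathcal{K}(F_\alpha^p)$ with $BA = \Id + K_1$ and $AB = \Id + K_2$, and moreover $B$ can be chosen with $\|B\|$ arbitrarily close to $\|(A + \mathcal{K}(F_\alpha^p))^{-1}\|$ by adding suitable compact perturbations. Since the shift $X \mapsto X_z = \widetilde{C}_z X \widetilde{C}_{-z}$ is conjugation by isometries and hence multiplicative, these identities pass to the shifted versions $B_{z_\gamma} A_{z_\gamma} = \Id + (K_1)_{z_\gamma}$ and $A_{z_\gamma} B_{z_\gamma} = \Id + (K_2)_{z_\gamma}$, with $(K_i)_{z_\gamma} \to 0$ $\ast$-strongly by Proposition \ref{compactconv}.

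Next I would show $A_x$ is bounded below. For any $f \in F_\alpha^p$ the decomposition
\[ B_{z_\gamma} A_x f = B_{z_\gamma} A_{z_\gamma} f - B_{z_\gamma} (A_{z_\gamma} - A_x) f \]
has its first summand converging to $f$ (by the first shifted identity and $(K_1)_{z_\gamma} \to 0$ strongly) and its second summand tending to zero (by $\|B_{z_\gamma}\| = \|B\|$ and strong convergence $A_{z_\gamma} \to A_x$). Hence $B_{z_\gamma} A_x f \to f$, which yields $\|f\| \leq \|B\| \|A_x f\|$. The same argument applied to $A_{z_\gamma}^\ast B_{z_\gamma}^\ast = \Id + (K_2)_{z_\gamma}^\ast$ (using that taking the adjoint commutes with shifting, since $\widetilde{C}_z^\ast = \widetilde{C}_{-z}$) shows $A_x^\ast$ is also bounded below, and reflexivity of $F_\alpha^p$ then gives $A_x$ surjective, hence invertible with $\|A_x^{-1}\| \leq \|B\|$. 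Optimising over regularizers proves the norm estimate.

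For the final $\ast$-strong convergence $B_{z_\gamma} \to A_x^{-1}$ (valid for \emph{any} Fredholm regularizer $B$), I would apply the displayed identity above with $f = A_x^{-1} g$: this gives $B_{z_\gamma} g \to A_x^{-1} g$ for every $g$, since $A_x$ is now known to be surjective. The adjoint convergence $B_{z_\gamma}^\ast \to (A_x^{-1})^\ast$ follows by running the same argument through $B_{z_\gamma}^\ast A_x^\ast g \to g$.

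The main obstacle I anticipate is that Proposition \ref{compactconv} is stated for operators in $\mathcal{T}_{p,\alpha}$, whereas the $K_i$ above are only arbitrary compact operators on $F_\alpha^p$. This is resolved either by invoking the inclusion $\mathcal{K}(F_\alpha^p) \subseteq \mathcal{T}_{p,\alpha}$, or by noting that the argument for the proposition only needs the weak null convergence of $\widetilde{C}_{-z_\gamma} h$ for each fixed $h$ as $z_\gamma \to x \in \mathcal{M} \setminus \mathbb{C}^n$: compactness of $K_i$ then upgrades this to norm convergence $K_i \widetilde{C}_{-z_\gamma} h \to 0$, and applying the isometry $\widetilde{C}_{z_\gamma}$ preserves the conclusion. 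An entirely analogous statement for the adjoints yields the $\ast$-strong version needed above.
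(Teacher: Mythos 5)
Your proof is correct and takes essentially the same approach as the paper: shift the Fredholm regularizer identities, use that shifted compacts tend $\ast$-strongly to zero, deduce that $A_x$ and its Banach-space adjoint are bounded below (hence $A_x$ is invertible with $\|A_x^{-1}\|\leq\|B\|$), and obtain the $\ast$-strong convergence $B_{z_\gamma}\to A_x^{-1}$ from the very same decomposition evaluated at $f=A_x^{-1}g$. One small notational slip: in the adjoint step the identity you need is $B_{z_\gamma}^\ast A_{z_\gamma}^\ast = \Id + (K_2)_{z_\gamma}^\ast$ (the adjoint of $A_{z_\gamma}B_{z_\gamma}=\Id+(K_2)_{z_\gamma}$), not $A_{z_\gamma}^\ast B_{z_\gamma}^\ast$, so that $A_x^\ast$ appears on the right and the same decomposition shows it is bounded below.
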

\begin{proof}
As in \cite[Theorem 21]{Hagger}. Since $AB - \Id$ and $BA - \Id$ are both compact, $(AB-\Id)_{z_\gamma}$ and $(BA - \Id)_{z_\gamma}$ converge $\ast$-strongly to $0$ for $z_\gamma \to x$. Further,
\begin{align*}
\| f\| &= \| \widetilde{C}_{z_\gamma} \widetilde{C}_{-z_\gamma} f\|\\
&\leq \| \widetilde{C}_{z_\gamma} BA \widetilde{C}_{-z_\gamma} f\| + \| \widetilde{C}_{z_\gamma} (I - BA)\widetilde{C}_{-z_\gamma} f\|\\
&\leq \| \widetilde{C}_{z_\gamma} B \widetilde{C}_{-z_\gamma}\| \| \widetilde{C}_{z_\gamma} A \widetilde{C}_{-z_\gamma} f\| + \| \widetilde{C}_{z_\gamma} (I - BA)\widetilde{C}_{-z_\gamma} f\|\\
&= \| B\| \| \widetilde{C}_{z_\gamma} A \widetilde{C}_{-z_\gamma} f\| + \| \widetilde{C}_{z_\gamma} (I - BA)\widetilde{C}_{-z_\gamma} f\|
\end{align*}
for all $f \in F_\alpha^p$. Letting $z_\gamma \to x$ we get $\| f\| \leq \| B\| \| A_xf\|$. $A_x$ is hence injective with closed range. Using the same argument for the adjoint operators (see Proposition \ref{dualityfock}), we get $\| g\| \leq \| B\| \| A_x^\ast g\|$ for all $g \in F_\alpha^q$ and hence the surjectivity of $A_x$. $A_x$ is therefore invertible. We also get $\| A_x^{-1}\| \leq \| B\|$ from these estimates. Since $B$ was an arbitrary Fredholm regularizer of $A$, we have $\| A_x^{-1}\| \leq \| \big ( A + \mathcal{K}(F_\alpha^p) \big )^{-1} \|$. Using the identity
\begin{align*}
B_{z_\gamma} (A_x - A_{z_\gamma}) A_{x}^{-1} + (BA - \Id)_{z_\gamma} A_x^{-1} = B_{z_\gamma} - A_x^{-1},
\end{align*}
which can easily be established, we also get the $\ast$-strong convergence of $B_{z_\gamma}$ to $A_x^{-1}$, since $A_{z_\gamma} \to A_x$, $(BA - I)_{z_\gamma} \to 0$ and $\| B_{z_\gamma}\| \leq \| B\|$. 
\end{proof}

Combining Theorem \ref{thm24} and Theorem \ref{thm15} with \cite[Corollary 5.4]{Bauer_Isralowitz}, the fact that all limit operators exist for operators in $\mathcal{T}_{p,\alpha}$, we obtain:

\begin{prop}\label{prop25a}
$A \in \mathcal{T}_{p,\alpha}$ is Fredholm if and only if $A_x$ is invertible for all $x \in \mathcal{M} \setminus \mathbb{C}^n$ and $\displaystyle \sup_{x \in \mathcal{M} \setminus \mathbb{C}^n} \| A_x^{-1}\| < \infty$.
\end{prop}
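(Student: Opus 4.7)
The plan is to show the two directions separately; both amount to quoting the machinery already built up, with the cited Bauer--Isralowitz existence result as the glue.

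For the ``if'' direction, assume $A_x$ is invertible for every $x\in\mathcal{M}\setminus\mathbb{C}^n$ and $\sup_{x\in\mathcal{M}\setminus\mathbb{C}^n}\|A_x^{-1}\|<\infty$. This is exactly the hypothesis of Theorem~\ref{thm15}, which then delivers the Fredholmness of $A$. So this half is immediate and requires no further argument.

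For the ``only if'' direction, suppose $A\in\mathcal{T}_{p,\alpha}$ is Fredholm and pick an arbitrary $x\in\mathcal{M}\setminus\mathbb{C}^n$. By density of $\mathbb{C}^n$ in $\mathcal{M}$, there exists a net $(z_\gamma)$ in $\mathbb{C}^n$ with $z_\gamma\to x$. The cited result \cite[Corollary~5.4]{Bauer_Isralowitz} guarantees that for any operator in $\mathcal{T}_{p,\alpha}$ the shifted net $A_{z_\gamma}$ converges $*$-strongly to a limit operator $A_x$ which depends only on $x$, not on the choice of net. In particular the hypothesis of Theorem~\ref{thm24} is fulfilled, so we may conclude that $A_x$ is invertible with the bound
\[
\|A_x^{-1}\|\;\le\;\big\|(A+\mathcal{K}(F_\alpha^p))^{-1}\big\|.
\]
Since the right-hand side is independent of $x$, taking the supremum over $x\in\mathcal{M}\setminus\mathbb{C}^n$ yields the asserted uniform bound.

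There is really no obstacle here beyond invoking the correct previous results: the whole point of the preceding sections has been to build up Theorem~\ref{thm15} and Theorem~\ref{thm24} precisely so that this proposition drops out by combining them with the existence of limit operators from \cite{Bauer_Isralowitz}. The only thing to emphasise in writing the proof is that the uniform boundedness on the Fredholm side comes \emph{for free} from the single estimate $\|A_x^{-1}\|\le\|(A+\mathcal{K}(F_\alpha^p))^{-1}\|$ of Theorem~\ref{thm24}, so no additional work is needed to promote pointwise invertibility of the $A_x$ to a uniform bound.
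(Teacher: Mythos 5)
Your proof is correct and follows exactly the same route as the paper: the paper states Proposition~\ref{prop25a} as an immediate combination of Theorem~\ref{thm15} (for the ``if'' direction), Theorem~\ref{thm24} (for the ``only if'' direction, including the uniform bound $\|A_x^{-1}\|\le\|(A+\mathcal{K}(F_\alpha^p))^{-1}\|$), and the Bauer--Isralowitz result guaranteeing the existence of the limit operators.
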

The condition $\sup \| A_x^{-1}\| <\infty$ is actually redundant. This will be shown in the remaining part of this section. Denote
\begin{align*}
r_t := \operatorname{diam} (\operatorname{supp} \varphi_{j,t}) = \frac{8\sqrt{2n}}{t},
\end{align*}
which, of course, is independent of $j$, and define for $t>0, F \subseteq \mathbb{C}^n$ and $A \in \mathcal{L}(L_\alpha^p)$
\begin{align*}
\nu(A|_F) := \inf \{ \|Af\|; f \in L_\alpha^p, \| f\|=1, \operatorname{supp} f \subseteq F\}
\end{align*}
and
\begin{align*}
\nu_t (A|_F) := \inf_{w \in \mathbb{C}^n} \nu(A|_{F \cap B(w,r_t)}).
\end{align*}
We also use the notation $\nu(A) := \nu(A|_{\mathbb{C}^n})$.
\begin{prop}\label{prop25}
For $A,B \in \mathcal{L}(L_\alpha^p)$ and $F\subseteq \mathbb{C}^n$ it is 
\begin{enumerate}
\item $|\nu(A|_F) - \nu(B|_F)|\leq \| (A-B)M_{\chi_F}\| \leq \| A-B\|$,
\item $|\nu_t(A|_F) - \nu_t(B|_F)| \leq \| A-B\|$.
\end{enumerate}
\end{prop}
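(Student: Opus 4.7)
Both parts are reverse triangle inequalities for an infimum of a continuous quantity, so the strategy in both cases is the same: bound $\|Af\|$ in terms of $\|Bf\|$ plus an error controlled by $\|A-B\|$, take infima, then swap the roles of $A$ and $B$.

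For (i), I would fix any $f \in L_\alpha^p$ with $\|f\|=1$ and $\operatorname{supp} f \subseteq F$. The key observation is that such an $f$ satisfies $M_{\chi_F} f = f$, so $(A-B)f = (A-B) M_{\chi_F} f$ and hence $\|(A-B)f\| \leq \|(A-B) M_{\chi_F}\|$. The triangle inequality then gives $\|Af\| \leq \|Bf\| + \|(A-B) M_{\chi_F}\|$. Taking the infimum over admissible $f$ on each side yields $\nu(A|_F) \leq \nu(B|_F) + \|(A-B) M_{\chi_F}\|$, and symmetrically after swapping $A$ and $B$. The second inequality $\|(A-B) M_{\chi_F}\| \leq \|A-B\|$ is immediate from $\|M_{\chi_F}\| \leq 1$.

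For (ii), I would simply apply (i) with $F$ replaced by the set $F \cap B(w, r_t)$, which gives
\[
\nu(A|_{F \cap B(w,r_t)}) \leq \nu(B|_{F \cap B(w,r_t)}) + \|A-B\|
\]
for every $w \in \mathbb{C}^n$. Taking the infimum over $w$ on the right-hand side and then on the left gives $\nu_t(A|_F) \leq \nu_t(B|_F) + \|A-B\|$, and exchanging the roles of $A$ and $B$ completes the estimate.

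There is no real obstacle here; the only subtlety to keep in mind is that the estimate $\|(A-B)f\| \leq \|(A-B) M_{\chi_F}\|$ uses the support condition $\operatorname{supp} f \subseteq F$ essentially, which is exactly why we can control the difference by $\|(A-B) M_{\chi_F}\|$ rather than only by $\|A-B\|$. The passage to $\nu_t$ is then purely formal since the infimum over $w$ preserves the uniform bound.
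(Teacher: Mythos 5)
Your proof is correct and follows essentially the same route as the paper: both parts reduce to the reverse triangle inequality for the infimum $\nu$, using $M_{\chi_F}f = f$ for admissible $f$, and part (ii) is obtained from part (i) applied to $F \cap B(w, r_t)$ and then passing to the infimum over $w$. The paper runs the argument via an $\varepsilon$-near-infimizer rather than taking infima of both sides of a pointwise bound, but this is only a bookkeeping difference.
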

\begin{proof}
\textit{(i)}: As in \cite[Proposition 27]{Hagger}: For the first statement let $\varepsilon > 0$ and pick $f \in L_\alpha^p$ with $\| f\| = 1$, $\operatorname{supp} f \subseteq F$ and $\| Bf\| \leq \nu(B|_F) + \varepsilon$. Then
\begin{align*}
\nu(A|_F) - \nu(B|_F) - \varepsilon &\leq \nu(A|_F) - \| Bf\|\\
&\leq \| Af\| - \| Bf\|\\
&\leq \| (A-B)f\|\\
&\leq \| (A-B)M_{\chi_F}\|.
\end{align*}
Since the inequalities are symmetric in $A$ and $B$, the result follows.\\
\textit{(ii)}: Let again $\varepsilon > 0$, pick $w \in \mathbb{C}^n$ such that
\begin{align*}
\nu(B|_{F \cap B(w, r_t)}) \leq \nu_t(B|_F) + \varepsilon.
\end{align*}
Then
\begin{align*}
\nu_t(A|_F) - \nu_t(B|_F) - 2\varepsilon &\leq \nu_t(A|_F) - \nu(B|_{F\cap B(w,r_t)}) - \varepsilon\\
&\leq \nu(A|_{F \cap B(w,r_t)}) - \nu(B|_{F\cap B(w,r_t)}) - \varepsilon\\
&\leq \| (A-B)M_{\chi_{F \cap B(w,r_t)}}\|\\
&\leq \| A-B\|,
\end{align*}
where the second-to-last estimate can be concluded as in the first statement. Now use again the symmetry in $A$ and $B$.
\end{proof}
\begin{prop}\label{prop26}
Let $A \in \mathcal{T}_{p,\alpha}$. For every $\varepsilon > 0$ there exists some $t > 0$ such that for all $F \subseteq \mathbb{C}^n$ and all $B \in \{ \hat{A}\} \cup \{ \hat{A}_x; x \in \mathcal{M} \setminus \mathbb{C}^n \}$:
\begin{align*}
\nu(B|_F) \leq \nu_{t}(B|_F) \leq \nu(B|_F) + \varepsilon.
\end{align*}
\end{prop}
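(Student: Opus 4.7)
The first inequality $\nu(B|_F) \leq \nu_t(B|_F)$ is immediate: restricting test functions to the smaller set $F \cap B(w, r_t) \subseteq F$ only enlarges the infimum. For the second inequality, the plan is to decompose a near-minimizer of $\nu(B|_F)$ through a partition of unity whose $p$-th powers sum to one, giving an isometric $L^p$-decomposition into pieces supported in sets of diameter $\leq r_t$.

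Concretely, set $\tilde\varphi_{j,t} := \varphi_{j,t}^{1/p}$. This shares the supports, the bounded-overlap property, and the scaling of $\varphi_{j,t}$, satisfies $\sum_j \tilde\varphi_{j,t}^p \equiv 1$, and is $\tfrac{1}{p}$-H\"older continuous with constant $O(t^{1/p})$. Given $\delta>0$ pick $f$ with $\|f\|=1$, $\operatorname{supp} f \subseteq F$, $\|Bf\| \leq \nu(B|_F) + \delta$, and set $f_j := M_{\tilde\varphi_{j,t}}f$. Then
\[
\sum_j \|f_j\|^p = \|f\|^p = 1, \qquad \sum_j \|M_{\tilde\varphi_{j,t}}Bf\|^p = \|Bf\|^p,
\]
and each $f_j$ is supported in $F \cap B(w_j, r_t)$ for a suitable $w_j$, so $\|Bf_j\| \geq \nu_t(B|_F)\|f_j\|$. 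Writing $Bf_j = M_{\tilde\varphi_{j,t}}Bf + [B,M_{\tilde\varphi_{j,t}}]f$ and applying the $\ell^p$-triangle inequality yields
\[
\nu_t(B|_F) \leq \Bigl(\sum_j \|Bf_j\|^p\Bigr)^{1/p} \leq \|Bf\| + \eta_t(B) \leq \nu(B|_F) + \delta + \eta_t(B),
\]
where $\eta_t(B) := \sup_{\|g\|=1}\bigl(\sum_j \|[B,M_{\tilde\varphi_{j,t}}]g\|^p\bigr)^{1/p}$. Sending $\delta \to 0$, the proposition reduces to showing $\eta_t(B) \to 0$ as $t \to 0$, uniformly over the family $\{\hat A\} \cup \{\hat A_x : x \in \mathcal M \setminus \mathbb C^n\}$.

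For a fixed band-dominated $B$ this is the H\"older analogue of Proposition~\ref{prop8}~\textit{(iv)}. Choose a band-operator approximant $B'$ with $\|B-B'\| < \varepsilon$ and width $\omega$, and cover $\mathbb C^n$ by balls $B(z_k,\omega)$ of bounded overlap. On each such ball at most $2^{2n}$ of the $\tilde\varphi_{j,t}$ are nonzero (for $t$ small enough that $\omega < r_t$), and replacing $\tilde\varphi_{j,t}$ by $\tilde\varphi_{j,t} - \tilde\varphi_{j,t}(z_k)$ in the commutator, together with the H\"older estimate $|\tilde\varphi_{j,t}(x)-\tilde\varphi_{j,t}(y)| \leq (nt|x-y|)^{1/p}$ and the band property of $B'$, yields $\sum_j \|[B', M_{\tilde\varphi_{j,t}}]g\|^p \leq C t\omega \|B'\|^p \|g\|^p \to 0$. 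The remaining contribution satisfies $\sum_j \|[B-B', M_{\tilde\varphi_{j,t}}]g\|^p \leq 2^p \varepsilon^p \|g\|^p$ by using $\sum_j \tilde\varphi_{j,t}^p = 1$ on both summands of the commutator. Sending first $t \to 0$ and then $\varepsilon \to 0$ proves $\eta_t(B) \to 0$.

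The main obstacle is promoting this to a bound uniform in $B$. For shifts $B = C_z \hat A C_{-z}$ the estimate transfers automatically: $C_z B' C_{-z}$ is a band operator of the same width $\omega$ and $\|C_z\hat A C_{-z} - C_z B' C_{-z}\| = \|\hat A - B'\| < \varepsilon$, so the constants above are $z$-independent. For a limit operator $\hat A_x$, invoke the $\ast$-strong convergence $C_{z_\gamma}\hat A C_{-z_\gamma} \to \hat A_x$ and Fatou's lemma on the nonnegative $j$-sum:
\[
\sum_j \|[\hat A_x, M_{\tilde\varphi_{j,t}}]g\|^p \leq \liminf_\gamma \sum_j \|[C_{z_\gamma}\hat A C_{-z_\gamma}, M_{\tilde\varphi_{j,t}}]g\|^p,
\]
whence $\eta_t(\hat A_x) \leq \sup_z \eta_t(C_z\hat A C_{-z})$. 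This lower-semi-continuity argument is the technical heart of the proof; without it one could only establish decay of $\eta_t(B)$ pointwise in $B$, not the required uniform decay over the (typically uncountable) family of limit operators.
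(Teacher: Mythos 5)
Your proof is correct and, modulo the usual level of detail the paper itself elides, complete; but it takes a genuinely different route from the published argument. The paper first invokes the Lipschitz estimate of Proposition \ref{prop25} to reduce everything to the band case, explicitly constructs for each $x$ a band operator $(A_m)_x$ as a weak limit of $C_{z_\gamma} A_m C_{-z_\gamma}$ (checking $\omega((A_m)_x)\le\omega(A_m)$ and $\|\hat A_x-(A_m)_x\|\le\|\hat A-A_m\|$), and then cites the band-operator estimate from \cite[Proposition 23]{Hagger}. You instead isolate a single commutator functional $\eta_t(B)$ via the $p$-th--root partition of unity $\tilde\varphi_{j,t}=\varphi_{j,t}^{1/p}$, which makes $\sum_j\|M_{\tilde\varphi_{j,t}}f\|^p=\|f\|^p$ an exact identity and turns the key inequality $\nu_t(B|_F)\le\nu(B|_F)+\eta_t(B)$ into a one-line $\ell^p$-Minkowski computation; you then prove the band estimate from scratch via the H\"older modulus of $\varphi_{j,t}^{1/p}$, and, crucially, dispose of the uniformity over $\{\hat A_x\}$ by lower semicontinuity ($\eta_t(\hat A_x)\le\sup_z\eta_t(C_z\hat A C_{-z})$, obtainable by a finite-truncation argument since $\ast$-strong convergence makes each $j$-term converge in norm) rather than by manufacturing band limit operators $(A_m)_x$. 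What each approach buys: yours is self-contained and avoids the somewhat delicate weak-limit construction of $(A_m)_x$ and the verification of its band width, giving a cleaner transfer to limit operators; the paper's route is shorter on the page precisely because it leans on \cite{Hagger} and on the perturbation estimate of Proposition \ref{prop25}, which is reused elsewhere (e.g.\ in Proposition \ref{prop31}). Two small remarks: the bound you want on the band piece is $\sum_j\|[B',M_{\tilde\varphi_{j,t}}]g\|^p\le C\,t\,\omega\,\|B'\|^p\|g\|^p$, whose derivation (localize $[B',M_{\tilde\varphi_{j,t}}]g$ to the $\omega$-enlargement of $\operatorname{supp}\varphi_{j,t}$, then subtract local constants and use the band property) deserves a sentence more than you give it; and the Fatou step, while correct, is a slight overclaim of sophistication --- since $\|[C_{z_\gamma}\hat A C_{-z_\gamma},M_{\tilde\varphi_{j,t}}]g\|\to\|[\hat A_x,M_{\tilde\varphi_{j,t}}]g\|$ for each fixed $j$, passing through finite partial sums suffices.
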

\begin{proof}
The first inequality follows by definition. For the second inequality: Let $(A_m)_{m \in \mathbb{N}}$ be a sequence of band operators that converges to $\hat{A}$ in norm. Further, let $\varepsilon > 0 $ and choose $m \in \mathbb{N}$ such that $\| \hat{A} - A_m\| < \frac{\varepsilon}{4}$. For $x \in \mathcal{M} \setminus \mathbb{C}^n$ let $(z_\gamma)$ be a net in $\mathbb{C}^n$ converging to $x$. $(A_m)_{z_\gamma}$ is a bounded net in $\mathcal{L}(L_\alpha^p)$, hence we may pass to a weakly convergent subnet, which we also denote by $(A_m)_{z_\gamma}$. Let the limit of this net be denoted by $(A_m)_x$. The strong convergence of $\hat{A}_{z_\gamma}$ to $\hat{A}_x$ implies that $C_{z_\gamma} (\hat{A} - A_m) C_{-z_\gamma}$ converges weakly to $\hat{A}_x - (A_m)_x$. Thus,
\begin{align*}
\| \hat{A}_x - (A_m)_x\| \leq \sup_{\gamma} \| C_{z_\gamma} (\hat{A} - A_m) C_{-z_\gamma}\| = \| \hat{A} - A_m\| < \frac{\varepsilon}{4}.
\end{align*}
Now let $f, g \in L^\infty(\mathbb{C}^n)$ be such that $\operatorname{dist}(\operatorname{supp} f, \operatorname{supp}g) > \omega(A_m)$. Then Lemma \ref{lmm20} gives
\begin{align*}
M_f (C_{z_\gamma} A_m C_{-z_\gamma})M_g = C_{z_\gamma} M_{f \circ \tau_{-z_\gamma}} A_m M_{g \circ \tau_{-z_\gamma}} C_{-z_\gamma} = 0,
\end{align*}
since $\operatorname{dist}(\operatorname{supp} f \circ \tau_{-z_\gamma}, \operatorname{supp}g \circ \tau_{-z_\gamma}) = \operatorname{dist}(\operatorname{supp} f, \operatorname{supp}g)
$. This implies $\omega( (A_m)_{z_\gamma}) \leq \omega (A_m)$ and hence $\omega ((A_m)_x) \leq \omega (A_m)$ by passing to the limit. Observe now that, if we know that there exists a $t \in (0,1)$ such that for all $F \subseteq \mathbb{C}^n$ and all $B \in \{ A_m\} \cup \{ (A_m)_x; x \in \mathcal{M} \setminus \mathbb{C}^n\}$ it is
\begin{align*}
\nu_t(B|_F) \leq \nu(B|_F) + \frac{\varepsilon}{2},
\end{align*}
we are done, since by Proposition \ref{prop25} it is
\begin{align*}
|\nu(\hat{A}|_F) - \nu(A_m|_F)| \leq \| \hat{A} - A_m\| < \frac{\varepsilon}{4}
\end{align*}
and
\begin{align*}
|\nu(\hat{A}_x|_F) - \nu( (A_m)_x|_F)| \leq \| \hat{A}_x - (A_m)_x\| < \frac{\varepsilon}{4}.
\end{align*}
For the existence of such a $t$, we refer to the corresponding part of the proof of the unit ball case in \cite[Proposition 23]{Hagger}, which is identical to the situation in the Fock space.
\end{proof}
\begin{prop}\label{prop27}
$\{ A_x; x \in \mathcal{M}\}$ and $\{ A_x; x \in \mathcal{M} \setminus \mathbb{C}^n\}$ are both compact in the strong operator topology for each $A \in \mathcal{T}_{p,\alpha}$.
\end{prop}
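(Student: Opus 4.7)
The plan is to show that the map $\Phi \colon \mathcal{M} \to \mathcal{L}(F_\alpha^p)$, $\Phi(x) := A_x$, is continuous when $\mathcal{L}(F_\alpha^p)$ carries the strong operator topology, and then to invoke compactness of $\mathcal{M}$ and of $\mathcal{M} \setminus \mathbb{C}^n$. The latter compactness holds because $\mathcal{M} \setminus \mathbb{C}^n$ is closed in $\mathcal{M}$: since $C_0(\mathbb{C}^n)$ sits inside $\BUC(\mathbb{C}^n)$ as a closed ideal, its hull in $\mathcal{M}$ is closed, and this hull is exactly $\mathcal{M} \setminus \mathbb{C}^n$, because any net $(z_\gamma) \subset \mathbb{C}^n$ converging in $\mathcal{M}$ to a point $x \notin \mathbb{C}^n$ must leave every compact subset of $\mathbb{C}^n$, so $x(f) = \lim f(z_\gamma) = 0$ for $f \in C_0(\mathbb{C}^n)$, whereas point evaluations do not annihilate $C_0(\mathbb{C}^n)$.

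For SOT-continuity of $\Phi$, let $(x_\gamma) \subset \mathcal{M}$ converge to some $x \in \mathcal{M}$, and fix $f \in F_\alpha^p$ and $\varepsilon > 0$. The key fact I would use is that whenever a net $(z_\lambda) \subset \mathbb{C}^n$ converges in $\mathcal{M}$ to some $y \in \mathcal{M}$, one has $A_{z_\lambda} f \to A_y f$ in $F_\alpha^p$: for $y \in \mathcal{M} \setminus \mathbb{C}^n$ this is \cite[Corollary 5.4]{Bauer_Isralowitz}, while for $y \in \mathbb{C}^n$ it reduces, via the identification of the $\mathcal{M}$-topology on $\mathbb{C}^n$ with the Euclidean one (Lipschitz bumps belong to $\BUC(\mathbb{C}^n)$ and separate nearby points), to the standard SOT-continuity of the weighted shifts $z \mapsto \widetilde{C}_z$, which follows by a uniform-boundedness argument from continuity on a dense subset. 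Using this, together with density of $\mathbb{C}^n$ in $\mathcal{M}$, for each $\gamma$ and each open neighbourhood $U$ of $x$ containing $x_\gamma$ I can pick $z_{\gamma, U} \in U \cap \mathbb{C}^n$ with $\|A_{z_{\gamma, U}} f - A_{x_\gamma} f\| < \varepsilon/2$.

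Indexing these choices over the directed set $\Delta$ of pairs $(\gamma, U)$ satisfying $x_\gamma \in U$, ordered by $(\gamma_1, U_1) \geq (\gamma_2, U_2)$ iff $\gamma_1 \geq \gamma_2$ and $U_1 \subseteq U_2$, produces a net $(z_{\gamma, U})$ in $\mathbb{C}^n$ converging to $x$ in $\mathcal{M}$; the same key fact then gives $A_{z_{\gamma, U}} f \to A_x f$. If $A_{x_\gamma} f \not\to A_x f$, pass to a subnet with $\|A_{x_\gamma} f - A_x f\| \geq \varepsilon$ for all $\gamma$; the triangle inequality combined with the $\varepsilon/2$-bound forces $\|A_{z_{\gamma, U}} f - A_x f\| > \varepsilon/2$ cofinally in $\Delta$, contradicting the convergence $A_{z_{\gamma, U}} f \to A_x f$. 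Hence $\Phi$ is SOT-continuous, and both $\Phi(\mathcal{M})$ and $\Phi(\mathcal{M} \setminus \mathbb{C}^n)$ are SOT-compact.

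The main obstacle is topological rather than computational: $\mathcal{M}$ is not first countable in general, so the argument has to be genuinely net-theoretic, and the directed set $\Delta$ must be set up with some care so that the chosen points $z_{\gamma, U}$ simultaneously approximate $x_\gamma$ in the weak-$\ast$ topology of $\mathcal{M}$ (to invoke the key convergence at the end) and $A_{x_\gamma} f$ in $F_\alpha^p$-norm (to feed into the triangle inequality). The uniform bound $\|A_x\| \leq \|A\|$, which is implicit throughout and follows from the shifts being isometries together with the passage of norm bounds through $\ast$-strong limits, is the reason the argument only needs continuity on a dense set in the second ingredient above.
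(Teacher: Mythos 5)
Your high-level strategy — exhibit the two sets as the SOT-continuous images of the compact spaces $\mathcal{M}$ and $\mathcal{M}\setminus\mathbb{C}^n$ — is exactly what the paper does; the paper's proof is a one-liner that cites \cite[Proposition 5.3]{Bauer_Isralowitz} for the continuity of $x\mapsto A_x$ and takes the compactness of $\mathcal{M}\setminus\mathbb{C}^n$ for granted. Your additional work (the hull-of-$C_0(\mathbb{C}^n)$ argument for closedness, and the diagonal-net construction that upgrades SOT-convergence along nets from $\mathbb{C}^n$ to full SOT-continuity on $\mathcal{M}$) is a correct in-house proof of the facts the paper merely cites, so the route is the same but you have supplied the missing details.
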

\begin{proof}
$\mathcal{M}$ and $\mathcal{M} \setminus \mathbb{C}^n$ are compact and $x \mapsto A_x$ is continuous w.r.t. the strong operator topology \cite[Proposition 5.3]{Bauer_Isralowitz}.
\end{proof}
\begin{lmm}\label{lmm28}
Let $A \in \mathcal{T}_{p,\alpha}$, $w \in \mathbb{C}^n$ and $r > 0$. Then, for each $f \in L_\alpha^p$ with $\operatorname{supp} f \subseteq B(w, r)$ and every $x \in \mathcal{M} \setminus \mathbb{C}^n$ there exists $g \in L_\alpha^p$ and $y \in \mathcal{M} \setminus \mathbb{C}^n$ with $\| g\| = \| f\|$, $\operatorname{supp} g \subseteq B(0, r)$ and $\| \hat{A}_x f\| = \| \hat{A}_{y} g\|$. Further, $\nu(\hat{A}_{y}|_{B(0,r + |w|)}) \leq \nu(\hat{A}_x|_{B(0,r)})$.
\end{lmm}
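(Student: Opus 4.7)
My plan is to produce $g$ by shifting $f$ with the Weyl operator $C_{-w}$ and to produce $y$ as the image of $x$ under the natural translation action of $\mathbb{C}^n$ on $\mathcal{M}$. I set $g := C_{-w} f$; since $C_{-w}$ is an isometry on $L_\alpha^p$ we have $\|g\| = \|f\|$, and the explicit formula $(C_{-w} f)(u) = f(u+w)\,e^{\alpha\langle u,-w\rangle - \frac{\alpha}{2}|w|^2}$ shows $\operatorname{supp} g \subseteq B(0,r)$. For $y$, I note that $h \mapsto h\circ\tau_w$ is an isometric automorphism of $\BUC(\mathbb{C}^n)$, so its dual $\tau_w^\ast:\mathcal{M}\to\mathcal{M}$ is weak-$\ast$ continuous, restricts to the homeomorphism $z\mapsto z-w$ on $\mathbb{C}^n$, and therefore maps $\mathcal{M}\setminus\mathbb{C}^n$ onto itself. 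Setting $y := \tau_w^\ast(x)\in\mathcal{M}\setminus\mathbb{C}^n$, any net $z_\gamma \to x$ in $\mathbb{C}^n$ yields $z_\gamma - w \to y$ in $\mathcal{M}$.

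The crucial step is the intertwining relation $\hat{A}_x C_w = C_w \hat{A}_y$. A direct computation from the definition of $C_z$ gives the Weyl-type identity $C_a C_b = e^{-i\alpha \operatorname{Im}\langle a, b\rangle}C_{a+b}$, and specialising yields $C_{z_\gamma} = e^{i\alpha\operatorname{Im}\langle w,z_\gamma\rangle}C_w C_{z_\gamma-w}$ together with $C_{-z_\gamma}C_w = e^{i\alpha\operatorname{Im}\langle z_\gamma,w\rangle}C_{-(z_\gamma-w)}$. Since $\operatorname{Im}\langle w, z_\gamma\rangle = -\operatorname{Im}\langle z_\gamma, w\rangle$, the two phases cancel upon multiplication and one obtains the exact operator identity
\[
\hat{A}_{z_\gamma} C_w \;=\; C_{z_\gamma}\hat{A}\,C_{-z_\gamma}C_w \;=\; C_w\, \hat{A}_{z_\gamma - w}.
\]
Passing to strong limits as $z_\gamma \to x$ (so $z_\gamma-w\to y$), and using $\ast$-strong convergence of limit operators established earlier, gives $\hat{A}_x C_w = C_w \hat{A}_y$. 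Applying this to $g$ together with $f = C_w g$ and the isometry of $C_w$ yields $\|\hat{A}_x f\| = \|C_w \hat{A}_y g\| = \|\hat{A}_y g\|$, completing the first part.

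For the additional inequality I apply the first part verbatim with radius $r+|w|$ in place of $r$; since the assignment $f\mapsto g$, $x\mapsto y$ depends only on $w$ and $x$, passing to the infimum over unit-norm $f$ with $\operatorname{supp} f\subseteq B(w,r+|w|)$ gives $\nu(\hat{A}_x|_{B(w,r+|w|)}) = \nu(\hat{A}_y|_{B(0,r+|w|)})$. Since $B(0,r)\subseteq B(w,r+|w|)$, the definition of $\nu$ immediately gives $\nu(\hat{A}_x|_{B(0,r)}) \geq \nu(\hat{A}_x|_{B(w,r+|w|)})$, and the claim follows by chaining the two. The main obstacle is the Weyl-cocycle bookkeeping: one must make sure that the intertwining really holds as an exact equality $\hat{A}_{z_\gamma}C_w = C_w \hat{A}_{z_\gamma - w}$ on the nose, with no $\gamma$-dependent phase, because such a phase would not pass nicely to the strong limit. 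Fortunately the two phases produced when splitting $C_{z_\gamma}$ and $C_{-z_\gamma}C_w$ cancel termwise, so no scalar survives.
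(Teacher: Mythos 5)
Your proof is correct and follows essentially the same route as the paper's: set $g = C_{-w}f$, use the Weyl cocycle identity and the cancellation of the phase factors to get the exact intertwining $C_{-w}\hat{A}_x C_w = \hat{A}_y$, and then derive the norm equality and the $\nu$-inequality from the isometry of $C_{\pm w}$. The one genuine improvement you make is pinning down $y$ uniquely as $\tau_w^\ast(x)$ via the dual translation homeomorphism of $\mathcal{M}$, rather than extracting a convergent subnet and invoking the compactness of the limit-operator set (Proposition~\ref{prop27}) as the paper does; this is a small but clean simplification and your derivation of the second inequality, applying the first part with $r+|w|$ in place of $r$ and using $B(0,r)\subseteq B(w,r+|w|)$, is likewise correct and equivalent to the paper's use of the inclusion $B(-w,r)\subseteq B(0,r+|w|)$.
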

\begin{proof}
Using the definition, one can quickly check that
\begin{align*}
\widetilde{C}_{w_1} \widetilde{C}_{w_2} = \widetilde{C}_{w_1 + w_2} e^{\frac{\alpha}{2}(\langle w_2, w_1\rangle - \langle w_1, w_2\rangle)}
\end{align*}
for every $w_1, w_2 \in \mathbb{C}^n$. Let $(z_\gamma)$ be a net in $\mathbb{C}^n$ that converges to $x$. Taking a suitable subsequence if necessary, we may assume that
\begin{align*}
\widetilde{C}_{-w} \widetilde{C}_{z_\gamma} A \widetilde{C}_{-z_\gamma} \widetilde{C}_{w} &= \widetilde{C}_{z_\gamma - w} A \widetilde{C}_{-(z_\gamma - w)} \to A_y
\end{align*}
for some $y \in \mathcal{M} \setminus \mathbb{C}^n$ by Proposition \ref{prop27}, and hence $\widetilde{C}_{-w} A_x \widetilde{C}_w = A_{y}$. Since $P_\alpha C_w = \widetilde{C}_w P_\alpha$, we also have $C_{-w} \hat{A}_x C_{w} = \hat{A}_{y}$. Now let $f \in L_\alpha^p$ be such that $\operatorname{supp} f \subseteq B(w,r)$. Then $g := C_{-w} f$ satisfies $\| g\| = \| f\|$, $\operatorname{supp} g \subseteq B(0,r)$ and $\| \hat{A}_{y} g\| = \| \hat{A}_x f\|$.\\
For the second statement, pick a function $h \in L_\alpha^p$ such that $\operatorname{supp} h \subseteq B(0,r)$. Then $C_{-w}h$ satisfies $\operatorname{supp} (C_{-w}h) \subseteq B(-w,r) \subseteq B(0,r + |w|)$ and $\| \hat{A}_x h\| = \| \hat{A}_{y} C_{-w} h\|$.
\end{proof}
\begin{lmm}\label{lmm30}
Let $A \in \mathcal{T}_{p,\alpha}$. Then there exists a $y \in \mathcal{M} \setminus \mathbb{C}^n$ such that
\begin{align*}
\nu(\hat{A}_y) = \inf \{ \nu(\hat{A}_x); x \in \mathcal{M} \setminus \mathbb{C}^n \}.
\end{align*}
\end{lmm}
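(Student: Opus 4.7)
Set $\nu_* := \inf\{\nu(\hat{A}_x) : x \in \mathcal{M} \setminus \mathbb{C}^n\}$; the task is to produce $y \in \mathcal{M} \setminus \mathbb{C}^n$ with $\nu(\hat{A}_y) \le \nu_*$. My plan is to combine the localisation estimate of Proposition \ref{prop26}, the shifting device of Lemma \ref{lmm28}, and the strong compactness of Proposition \ref{prop27}, arranged into a diagonal/compactness argument in the spirit of Lindner and Seidel.

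First I would fix $\varepsilon_k \searrow 0$ and, via Proposition \ref{prop26}, choose parameters $t_k$ with $\nu_{t_k}(\hat{A}_x|_F) \le \nu(\hat{A}_x|_F) + \varepsilon_k$ uniformly in $x$ and $F$; set $R_k := r_{t_k}$. Take a minimising sequence $y_m$ with $\nu(\hat{A}_{y_m}) < \nu_* + 1/m$. For every pair $(m,k)$, the definition of $\nu_{t_k}$ yields $w_{m,k} \in \mathbb{C}^n$ with
\[
\nu(\hat{A}_{y_m}|_{B(w_{m,k},R_k)}) < \nu_* + 1/m + 2\varepsilon_k,
\]
and Lemma \ref{lmm28} then produces $z_{m,k} \in \mathcal{M} \setminus \mathbb{C}^n$ with $\nu(\hat{A}_{z_{m,k}}|_{B(0,R_k)}) \le \nu(\hat{A}_{y_m}|_{B(w_{m,k},R_k)})$. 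Thus, after shifting, the near-minimising test functions can be assumed to live in balls centred at the origin.

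Next, using the strong compactness of $\{A_x : x \in \mathcal{M} \setminus \mathbb{C}^n\}$ (Proposition \ref{prop27}) together with a Cantor diagonal procedure, I would extract a single sequence $z_m := z_{m,k_m}$ with $k_m \to \infty$ slowly enough that $A_{z_m} \to A_y$ $*$-strongly for some $y \in \mathcal{M} \setminus \mathbb{C}^n$; since $\hat{A}_{z_m} = A_{z_m}P_\alpha + Q_\alpha$, the extensions also converge $*$-strongly to $\hat{A}_y$. For each $m$ choose $h_m \in L_\alpha^p$ with $\|h_m\| = 1$, $\operatorname{supp} h_m \subseteq B(0,R_{k_m})$ and $\|\hat{A}_{z_m} h_m\| \to \nu_*$. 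The $h_m$ are bounded in $L_\alpha^p$, and reflexivity (for $p \in (1,\infty)$) lets me pass to a weakly convergent subsequence $h_m \rightharpoonup h$. A short duality computation, combining the $*$-strong convergence $\hat{A}_{z_m}^\ast \to \hat{A}_y^\ast$ with the weak convergence of $h_m$, shows that $\hat{A}_{z_m} h_m \rightharpoonup \hat{A}_y h$ weakly, so weak lower semicontinuity of the norm gives $\|\hat{A}_y h\| \le \liminf_m \|\hat{A}_{z_m} h_m\| \le \nu_*$.

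The main obstacle I anticipate is the possibility that the weak limit $h$ has norm strictly less than $1$, since weak limits need not preserve the norm in $L_\alpha^p$; then $\|\hat{A}_y h\| \le \nu_*$ only gives the weaker bound $\nu(\hat{A}_y) \le \nu_*/\|h\|$, which is useless when $\|h\|=0$ and non-sharp otherwise. To defeat this I would iterate the shifting procedure: if $\|h\| < 1$ then the ``escaping mass'' $h_m - h$ is weakly null with $\liminf\|h_m - h\| > 0$, and recentring it by a further application of Lemma \ref{lmm28} (with shifts tracking the location of the remaining mass in the balls $B(0, R_{k_m})$) followed by a further $*$-strong extraction via Proposition \ref{prop27} produces a new candidate $y$ in whose vicinity the witnesses can be made to converge to a unit vector. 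This book-keeping—essentially the sequence-space trick of \cite{Lindner_Seidel} transported to the Fock setting—is where the real technical work lies; once arranged, one obtains a weak limit $h$ with $\|h\| = 1$ and $\|\hat{A}_y h\| \le \nu_*$, so that $\nu(\hat{A}_y) \le \|\hat{A}_y h\| \le \nu_*$, and the reverse inequality is immediate from the definition of $\nu_*$.
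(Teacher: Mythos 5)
Your proposal assembles the right ingredients (Proposition \ref{prop26}, Lemma \ref{lmm28}, Proposition \ref{prop27}, and the minimising sequence), but at the crucial convergence step it deviates from the paper's argument and leaves a genuine gap exactly where you flag the difficulty.

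The gap is the weak-compactness step. You take witness functions $h_m$ supported in \emph{growing} balls $B(0,R_{k_m})$ with $R_{k_m}\to\infty$, pass to a weak limit $h$, and invoke weak lower semicontinuity of the norm. As you note yourself, nothing prevents $\|h\|<1$, and in fact $\|h\|=0$ is perfectly possible here: the $h_m$ can concentrate near the boundary of $B(0,R_{k_m})$, whose radius escapes to infinity, so all the mass can leave. The ``iterated recentring'' you propose to repair this is not a small cleanup: the escaping mass can split into several pieces at different scales, and tracking all of them is precisely the hard combinatorics of the Lindner--Seidel argument. Merely asserting it can be ``arranged'' does not close the gap.

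The paper's proof avoids weak limits of test functions entirely and therefore never meets the escaping-mass problem. The key device is to apply Lemma \ref{lmm28} \emph{repeatedly} (telescoping over the scales $r_{t_k}$, which is why they insist $r_{t_{k+1}}>2r_{t_k}$) so that one obtains, for each index $j$, a \emph{single} point $y_j\in\mathcal{M}\setminus\mathbb{C}^n$ with $\nu(\hat{A}_{y_j}|_{B(0,4r_{t_k})})\leq \nu(\hat{A}_{x_j})+2^{-(k-1)}$ holding for \emph{all} $k$ simultaneously. Then, for a \emph{fixed} $k$, one does not need to take weak limits of witnesses: since $P_\alpha M_{\chi_{B(0,4r_{t_k})}}$ is compact (Proposition \ref{cpt}) and $A_{y_{j_\gamma}}\to A_y$ strongly with uniformly bounded norms, the product $(\hat{A}_{y_{j_\gamma}}-\hat{A}_y)M_{\chi_{B(0,4r_{t_k})}}$ converges in \emph{operator norm}, and Proposition \ref{prop25} then gives $\nu(\hat{A}_{y_{j_\gamma}}|_{B(0,4r_{t_k})})\to\nu(\hat{A}_y|_{B(0,4r_{t_k})})$ as an equality of limits. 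No test function can escape the fixed ball $B(0,4r_{t_k})$ because $k$ is held fixed in this step; only afterwards is $k\to\infty$ taken. In short, the paper trades weak compactness of witnesses for norm convergence of restricted operators, and the simultaneous (nested) control over all scales is the piece of Lemma \ref{lmm28}'s ``repeated'' use that your proposal is missing. Supplying the iterated recentring in your framework would essentially reconstruct that nested control after the fact, with more effort.
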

\begin{proof}
We only give a sketch of the proof here, since it is identical (up to the obvious changes) to the proof in \cite[Lemma 25]{Hagger}.\\
Using Proposition \ref{prop26} we get a sequence $(t_k)_{k \in \mathbb{N}}$ with $r_{t_{k+1}} > 2r_{t_k}$ and
\begin{align*}
\nu_{t_k} (B|_F) \leq \nu(B|_F) + \frac{1}{2^{k+1}}
\end{align*}
for all $k \in \mathbb{N}, F \subseteq \mathbb{C}^n$ and $B \in \{ \hat{A}\} \cup \{ \hat{A}_x; x \in \mathcal{M} \setminus \mathbb{C}^n\}$. Further, let $(x_j)_{j \in \mathbb{N}}$ be a sequence in $\mathcal{M} \setminus \mathbb{C}^n$ such that
\begin{align*}
\lim_{j \to \infty} \nu(\hat{A}_{x_j}) = \inf \{\nu(\hat{A}_x); \ x \in \mathcal{M} \setminus \mathbb{C}^n \}.
\end{align*}
Using Lemma \ref{lmm28} repeatedly we can construct a sequence $(y_j)_{j \in \mathbb{N}} \subseteq \mathcal{M} \setminus \mathbb{C}^n$ such that
\begin{align*}
\nu(\hat{A}_{y_j}|_{B(0, 4r_{t_k})}) \leq \nu(\hat{A}_{x_j}) + \frac{1}{2^{k-1}}.
\end{align*}
Passing to a strongly convergent subnet $(A_{y_{j_\gamma}})_\gamma$ of $(A_{y_j})_{j \in \mathbb{N}}$ (Proposition \ref{prop27}), which converges to $A_y$ for some $y \in \mathcal{M} \setminus \mathbb{C}^n$, we get
\begin{align*}
\| (\hat{A}_{y_{j_\gamma}} - \hat{A}_y)M_{\chi_{B(0, 4r_{t_k})}}\| \to 0
\end{align*}
by Proposition \ref{cpt} and hence
\begin{align*}
\nu(\hat{A}_{y_{j_\gamma}}|_{B(0, 4r_{t_k})}) \to \nu(\hat{A}_y|_{B(0, 4r_{t_k})})
\end{align*}
by Proposition \ref{prop25}. Then
\begin{align*}
\nu(\hat{A}_y) &\leq \nu(\hat{A}_y|_{B(0, 4r_{t_k})}) = \lim_\gamma \nu(\hat{A}_{y_{j_\gamma}}|_{B(0, 4r_{t_k})}) \\
&\leq \lim_\gamma \nu(\hat{A}_{x_{j_\gamma}}) + \frac{1}{2^{k-1}} = \lim_{j \to \infty} \nu(\hat{A}_{x_j}) + \frac{1}{2^{k-1}}.
\end{align*}
Taking the limit $k \to \infty$, we get the desired result.
\end{proof}
We can now finally state and prove our main result. This extends a result of Bauer and Isralowitz (\cite[Theorem 7.2]{Bauer_Isralowitz}) to arbitary $p$ and shows that the uniform boundedness condition is actually redundant. Su\'arez et al.~showed a similar result for the unit ball (\cite[Theorem 5.8]{Mitkovski_Suarez_Wick}, \cite[Theorem 10.3]{Suarez}), which was then improved in \cite{Hagger}.
\begin{thm} \label{main_thm}
For $A \in \mathcal{T}_{p,\alpha}$ the following are equivalent:
\begin{enumerate}
\item $A$ is Fredholm,
\item $A_x$ is invertible and $\| A_x^{-1}\| \leq \| \big (A + \mathcal{K}(F_\alpha^p) \big )^{-1}\|$ for all $x \in \mathcal{M}\setminus \mathbb{C}^n$,
\item $A_x$ is invertible for all $x \in \mathcal{M} \setminus \mathbb{C}^n$ and $\displaystyle \sup_{x \in \mathcal{M} \setminus \mathbb{C}^n} \| A_x^{-1}\| < \infty$,
\item $A_x$ is invertible for all $x \in \mathcal{M} \setminus \mathbb{C}^n$,
\item $\hat{A}_x$ is invertible for all $x \in \mathcal{M} \setminus \mathbb{C}^n$.
\end{enumerate}
\end{thm}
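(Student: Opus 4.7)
The plan is to close the cycle
\[
(i) \Longrightarrow (ii) \Longrightarrow (iii) \Longrightarrow (iv) \Longleftrightarrow (v) \Longrightarrow (iii) \Longrightarrow (i),
\]
so that (iii) and (v) act as the two hubs of the argument. The implications (ii) $\Rightarrow$ (iii) and (iii) $\Rightarrow$ (iv) are immediate, and (iii) $\Rightarrow$ (i) is precisely Proposition \ref{prop25a}. For (i) $\Rightarrow$ (ii), I would pick any $x \in \mathcal{M} \setminus \mathbb{C}^n$ and any net $(z_\gamma)$ in $\mathbb{C}^n$ converging to $x$; by \cite[Corollary 5.4]{Bauer_Isralowitz} the net $A_{z_\gamma}$ converges $\ast$-strongly to $A_x$, so Theorem \ref{thm24} applies and yields both invertibility of $A_x$ and the bound $\|A_x^{-1}\| \leq \|(A + \mathcal{K}(F_\alpha^p))^{-1}\|$.

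Next I would establish (iv) $\Longleftrightarrow$ (v). Because $P_\alpha \widetilde{C}_z = \widetilde{C}_z P_\alpha$ (Lemma \ref{lmm20}(ii)) and because $\ast$-strong limits commute with left/right multiplication by the fixed operators $P_\alpha$ and $Q_\alpha$, one obtains the identity $\hat{A}_x = A_x P_\alpha + Q_\alpha$ for every $x \in \mathcal{M} \setminus \mathbb{C}^n$. Since $P_\alpha$ and $Q_\alpha$ are complementary projections and $F_\alpha^p$, $Q_\alpha L_\alpha^p$ are invariant subspaces of $\hat{A}_x$, the operator $\hat{A}_x$ decomposes as $A_x$ on $F_\alpha^p$ and as the identity on $Q_\alpha L_\alpha^p$. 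In particular $\hat{A}_x$ is invertible on $L_\alpha^p$ if and only if $A_x$ is invertible on $F_\alpha^p$, with inverse $A_x^{-1} P_\alpha + Q_\alpha$, and moreover $\|A_x^{-1}\| \leq \|\hat{A}_x^{-1}\|$ whenever both exist.

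The main obstacle is the implication (v) $\Longrightarrow$ (iii), which removes the uniform boundedness hypothesis. Here I would apply Lemma \ref{lmm30} to the operator $A$: there exists $y \in \mathcal{M} \setminus \mathbb{C}^n$ such that
\[
\nu(\hat{A}_y) = \inf\{\nu(\hat{A}_x) : x \in \mathcal{M} \setminus \mathbb{C}^n\}.
\]
Assuming (v), the operator $\hat{A}_y$ is invertible, hence $\nu(\hat{A}_y) = 1/\|\hat{A}_y^{-1}\| > 0$. Consequently $\inf_x \nu(\hat{A}_x) > 0$, and using the identity $\|\hat{A}_x^{-1}\| = 1/\nu(\hat{A}_x)$ valid for invertible operators on Banach spaces, together with the norm comparison $\|A_x^{-1}\| \leq \|\hat{A}_x^{-1}\|$ from the previous paragraph, we obtain
\[
\sup_{x \in \mathcal{M} \setminus \mathbb{C}^n} \|A_x^{-1}\| \leq \sup_{x \in \mathcal{M} \setminus \mathbb{C}^n} \|\hat{A}_x^{-1}\| = \frac{1}{\nu(\hat{A}_y)} < \infty,
\]
which establishes (iii). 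This last step is where all the heavy machinery of Section 4 -- Propositions \ref{prop25}, \ref{prop26}, \ref{prop27} and the Lindner--Seidel construction in Lemma \ref{lmm30} -- is used to force uniform boundedness from merely pointwise invertibility; everything else is bookkeeping.
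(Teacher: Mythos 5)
Your proof is correct and follows essentially the same route as the paper: the equivalence of (i)--(iii) rests on Theorems \ref{thm15} and \ref{thm24} (via Proposition \ref{prop25a}), (iv) $\Leftrightarrow$ (v) comes from the explicit inverse $A_x^{-1}P_\alpha + Q_\alpha$ of $\hat{A}_x = A_xP_\alpha + Q_\alpha$, and (v) $\Rightarrow$ (iii) uses Lemma \ref{lmm30} together with $\nu(\hat{A}_x) = \|\hat{A}_x^{-1}\|^{-1}$ and $\|A_x^{-1}\| \le \|\hat{A}_x^{-1}\|$. The only cosmetic differences are that you unwind (i) $\Rightarrow$ (ii) explicitly through Theorem \ref{thm24} and prove (iv) $\Leftrightarrow$ (v) in both directions, whereas the paper only needs the forward implication to close its cycle.
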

\begin{proof}
The equivalence of (i), (ii) and (iii) was already proven in Proposition \ref{prop25a}. That (iii) implies (iv) is clear. If $A_x$ is invertible, $A_x^{-1} P_\alpha + Q_\alpha$ is an inverse of $\hat{A}_x$. Therefore (iv) implies (v). We finish the proof by showing that (v) implies (iii): Using that $\nu(B) = \| B^{-1}\|^{-1} > 0$ if $B$ is invertible, we get
\begin{align*}
\sup_{x \in \mathcal{M} \setminus \mathbb{C}^n} \| \hat{A}_x^{-1}\| = \sup_{x \in \mathcal{M} \setminus \mathbb{C}^n} \frac{1}{\nu(\hat{A}_x)} = \frac{1}{\nu(\hat{A}_y)} < \infty,
\end{align*}
where $y$ is from Lemma \ref{lmm30}. An inverse for each $A_x$ is given by $\hat{A}_x^{-1}|_{F_\alpha^p}$. Since $\| B\| \leq \| \hat{B}\|$ for every $B \in \mathcal{L}(F_\alpha^p)$ we also get $\displaystyle \sup_{x \in \mathcal{M} \setminus \mathbb{C}^n} \| A_x^{-1}\| < \infty$. The fact that $\nu(B) = \| B^{-1}\|^{-1}$ for invertible $B$ can be found in \cite[Lemma 2.35]{Lindner}.
\end{proof}
We get the following corollary directly from the definition of the essential spectrum, which is defined as 
\begin{align*}
\sigma_{ess}(A) := \{ \lambda \in \mathbb{C}; \ A-\lambda \text{ is not Fredholm} \}
\end{align*}
for a bounded linear operator $A$.
\begin{cor}\label{cor31}
For each $A \in \mathcal{T}_{p,\alpha}$ it is
\begin{align*}
\sigma_{ess}(A) = \bigcup_{x \in \mathcal{M} \setminus \mathbb{C}^n} \sigma(A_x).
\end{align*}
\end{cor}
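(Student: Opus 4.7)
The proof is essentially a direct translation of Theorem \ref{main_thm} into the language of essential spectrum, so the plan is short.

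First I would verify that the objects on both sides are well-defined inside the framework: for any $\lambda \in \mathbb{C}$ the operator $\lambda \Id = T_\lambda$ (the Toeplitz operator with constant symbol $\lambda$) lies in $\mathcal{T}_{p,\alpha}$, hence $A - \lambda \in \mathcal{T}_{p,\alpha}$ whenever $A \in \mathcal{T}_{p,\alpha}$. In particular, by \cite[Corollary 5.4]{Bauer_Isralowitz} all limit operators $(A-\lambda)_x$ exist for $x \in \mathcal{M}\setminus\mathbb{C}^n$, so Theorem \ref{main_thm} applies to $A-\lambda$.

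Next I would check the compatibility of the shift with scalars: since $\widetilde{C}_z$ is an isometric isomorphism of $F_\alpha^p$, we have
\begin{align*}
(A - \lambda)_z = \widetilde{C}_z (A - \lambda)\widetilde{C}_{-z} = \widetilde{C}_z A \widetilde{C}_{-z} - \lambda\, \widetilde{C}_z \widetilde{C}_{-z} = A_z - \lambda.
\end{align*}
Taking the $\ast$-strong limit along a net $z_\gamma \to x \in \mathcal{M}\setminus\mathbb{C}^n$ then gives $(A-\lambda)_x = A_x - \lambda$ for every such $x$.

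Combining these two observations with Theorem \ref{main_thm} (in the form (i) $\Longleftrightarrow$ (iv)) yields, for every $\lambda \in \mathbb{C}$,
\begin{align*}
\lambda \notin \sigma_{ess}(A)
&\iff A - \lambda \text{ is Fredholm} \\
&\iff (A-\lambda)_x \text{ is invertible for all } x \in \mathcal{M}\setminus\mathbb{C}^n \\
&\iff A_x - \lambda \text{ is invertible for all } x \in \mathcal{M}\setminus\mathbb{C}^n \\
&\iff \lambda \notin \bigcup_{x \in \mathcal{M}\setminus\mathbb{C}^n} \sigma(A_x).
\end{align*}
Taking complements finishes the proof. There is really no hard step here; the only thing to be slightly careful about is that $\lambda \Id$ belongs to the Toeplitz algebra so that Theorem \ref{main_thm} is actually applicable to $A - \lambda$, and that the shift commutes with subtraction of scalars, both of which are immediate.
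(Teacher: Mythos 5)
Your proof is correct and takes exactly the route the paper intends: the corollary is stated as following "directly from the definition of the essential spectrum," i.e.~by applying Theorem \ref{main_thm} to $A-\lambda$ and using $(A-\lambda)_x = A_x - \lambda$. Your write-up simply makes the two small checks (that $A-\lambda\in\mathcal{T}_{p,\alpha}$ and that the shift commutes with subtraction of scalars) explicit, which the paper leaves to the reader.
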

We emphasize that the redundancy of the uniform boundedness condition in Theorem \ref{main_thm} is essential for this corollary (cf.~\cite[Theorem 7.3]{Bauer_Isralowitz}, \cite[Corollary 5.9]{Mitkovski_Suarez_Wick}, \cite[Corollary 10.4]{Suarez}).

\section{Norm estimates}
The aim of this section is to provide estimates of the essential norm for operators $A \in \mathcal{T}_{p,\alpha}$. Here we adapt some ideas from \cite{Hagger} and \cite{Hagger_Lindner_Seidel} and (slightly) improve \cite[Theorem 6.2, Theorem 7.1]{Bauer_Isralowitz}.

Define for $t > 0$, $F \subseteq \mathbb{C}^n$ and $A \in \mathcal{L}(F_\alpha^p)$
\begin{align*}
\| AP_\alpha|_F\| := \sup \{ \| AP_\alpha f\|; f \in L_\alpha^p, \| f\|=1, \operatorname{supp}  f \subseteq F\}
\end{align*}
and
\begin{align*}
\vertiii{AP_\alpha|_F}_t := \sup_{w \in \mathbb{C}^n} \| AP_\alpha|_{F \cap B(w, r_t)}\|.
\end{align*}
\begin{prop}\label{prop31}
For every $A \in \mathcal{T}_{p,\alpha}$ and every $\varepsilon > 0$ there exists a $t>0$ such that for all $F \subseteq \mathbb{C}^n$ and every $B \in \{ A \} \cup \{A_x; x \in \mathcal{M} \setminus \mathbb{C}^n\}$ it is
\begin{align*}
\| BP_\alpha|_F\| \geq \vertiii{BP_\alpha|_{F}}_t \geq \| BP_\alpha|_F\| - \varepsilon.
\end{align*}
\end{prop}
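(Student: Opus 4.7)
The first inequality is immediate from the definitions, since $\sup_{w \in \mathbb{C}^n}\|BP_\alpha|_{F\cap B(w,r_t)}\|$ is a supremum over a more restricted class of inputs than $\|BP_\alpha|_F\|$. The substantive content is the reverse estimate, which is the norm analog of Proposition \ref{prop26}, and the plan is to follow the same two-step reduction: approximate $BP_\alpha$ uniformly in $B$ by a band operator, and then prove the localization for band operators via a shifted-cell decomposition.

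For the approximation step, both $\hat A$ and $P_\alpha$ lie in $\BDO_\alpha^p$, so I choose band operators $A_m, P_m \in \mathcal{L}(L_\alpha^p)$ with $\|\hat A - A_m\|$ and $\|P_\alpha - P_m\|$ arbitrarily small. Following the construction in the proof of Proposition \ref{prop26}, for each $x \in \mathcal{M}\setminus\mathbb{C}^n$ and a net $z_\gamma \to x$, one passes to a weakly convergent subnet of $C_{z_\gamma} A_m C_{-z_\gamma}$ to obtain a band operator $(A_m)_x$ with $\omega((A_m)_x) \leq \omega(A_m)$ and $\|\hat A_x - (A_m)_x\| \leq \|\hat A - A_m\|$. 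For each $B \in \{A\}\cup\{A_x : x \in \mathcal{M}\setminus\mathbb{C}^n\}$ let $B' \in \{A_m, (A_m)_x\}$ be the corresponding band approximation and set $G_B := B' P_m$; then $G_B$ is a band operator whose band width is bounded by $\omega(A_m)+\omega(P_m)$ uniformly in $B$. Since $\|\hat B\| \leq \|A\|\|P_\alpha\|+\|Q_\alpha\|$ is uniformly bounded on $\{A\}\cup\{A_x\}$, the triangle inequality yields $\|BP_\alpha - G_B\| \leq \delta$ uniformly in $B$, with $\delta$ as small as we wish. The elementary bounds $\bigl|\|T_1|_F\|-\|T_2|_F\|\bigr| \leq \|T_1-T_2\|$ and the analog for $\vertiii{\cdot|_F}_t$ then reduce the proposition to the corresponding assertion for $G_B$ at an additive cost of $2\delta$.

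For the band-operator localization, fix $G$ with band width $\omega_0$ and $f$ with $\|f\|=1$, $\operatorname{supp} f\subseteq F$, and $\|Gf\|\geq\|G|_F\|-\eta$. Consider the shifted cubes $C_j(s) := (B_j+s)/t$ for $s\in[0,6/t)^{2n}$ and their inward $\omega_0$-shrinkages $C_j(s)^\circ$, which are pairwise disjoint boxes of Euclidean diameter $\leq 6\sqrt{2n}/t < r_t$, and which are non-empty once $6/t > 2\omega_0$. The complement $A(s)^c$ of $\bigsqcup_j C_j(s)^\circ$ is a boundary shell of density $O(\omega_0 t)$, so a Fubini averaging of $\|\chi_{A(s)^c} f\|^p$ over $s$ produces a shift $s^*$ with $\|\chi_{A(s^*)^c} f\|^p \leq c_n \omega_0 t$. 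Writing $\tilde f_j := \chi_{C_j(s^*)^\circ} f$ and $\tilde f := \sum_j \tilde f_j$, the $\tilde f_j$ have pairwise disjoint supports, and the band-width property of $G$ forces $\operatorname{supp}(G\tilde f_j)\subseteq C_j(s^*)$, so the $G\tilde f_j$ are again pairwise disjointly supported. Hence $\|G\tilde f\|^p = \sum_j \|G\tilde f_j\|^p$ and $\|\tilde f\|^p = \sum_j \|\tilde f_j\|^p$, which makes $(\|G\tilde f\|/\|\tilde f\|)^p$ a weighted $\ell^p$-average of the local ratios $(\|G\tilde f_j\|/\|\tilde f_j\|)^p$; consequently some $j^*$ satisfies $\|G\tilde f_{j^*}\|/\|\tilde f_{j^*}\| \geq \|G\tilde f\|/\|\tilde f\|$. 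Since $\operatorname{supp}\tilde f_{j^*}$ lies in a ball of radius $<r_t$, this local ratio is bounded above by $\vertiii{G|_F}_t$, while $\|G\tilde f\|/\|\tilde f\| \geq \|Gf\| - \|G\|\|f-\tilde f\| \geq \|G|_F\| - \eta - \|G\|(c_n\omega_0 t)^{1/p}$. Choosing $\eta$ and $t$ small then gives the band-operator claim, and combining with the uniform approximation completes the proposition.

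The main technical obstacle I anticipate is the shift-averaging step: producing a single shift $s^*$ that puts almost all of the $L^p$-mass of $f$ into the interiors $C_j(s^*)^\circ$, uniformly in $f$ and $F$. Once $s^*$ is chosen, the inward $\omega_0$-shrinkage together with the band-width property of $G$ confines each output $G\tilde f_j$ to a single cell $C_j(s^*)$, the disjoint-support summation becomes clean, and the remainder of the argument parallels the $\nu$-variant of Proposition \ref{prop26}.
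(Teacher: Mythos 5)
Your proof is correct and follows the same two-step strategy the paper uses: approximate $BP_\alpha$ uniformly over $B \in \{A\}\cup\{A_x : x\in\mathcal{M}\setminus\mathbb{C}^n\}$ by band operators of uniformly bounded band width (via the weakly convergent subnet construction from Proposition~\ref{prop26}), then prove the localization estimate for band operators by a shift-averaged cell decomposition. The paper outsources that second step to \cite[Prop.~27]{Hagger}, which is ultimately the Lindner--Seidel device you have written out explicitly; factoring the approximation through $\hat A\approx A_m$ and $P_\alpha\approx P_m$ rather than approximating the band-dominated operator $AP_\alpha$ directly is only a cosmetic difference.
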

\begin{proof}
The proof is very similar to the proof of Proposition \ref{prop26}. Only the second inequality needs to be proven, as the first follows directly from the definition. If $A_m$ is a band operator such that $\| AP_\alpha - A_m\| < \frac{\varepsilon}{4}$, one can prove that 
\begin{align*}
 \vertiii{C|_F}_t \geq \| C|_F \| - \varepsilon
\end{align*}
holds for all $F \subset \mathbb{C}^n$ and all $C \in \{ A_m\} \cup \{ (A_m)_x; x \in \mathcal{M} \setminus \mathbb{C}^n\}$, where $(A_m)_x$ is defined as in the proof of Proposition \ref{prop26} (cf. \cite[Proposition 27]{Hagger}). Then, using estimates similar to those in Proposition \ref{prop25}, one gets the desired result.
\end{proof}

This now allows us to give an alternative proof of the second part of \cite[Theorem 6.2]{Bauer_Isralowitz}. Our approach additionally shows that the constants there may in fact be chosen as $1$ and ${\| P_\alpha\|}^{-1}$. We do not know whether these constants are optimal, though.

\begin{thm}\label{thm27}
Let $A \in \mathcal{T}_{p,\alpha}$. Then
\begin{align*}
\frac{1}{\| P_\alpha\|} \| A + \mathcal{K}(F_\alpha^p)\| \leq \sup_{x \in \mathcal{M} \setminus \mathbb{C}^n} \| A_x\| \leq \| A + \mathcal{K}(F_\alpha^p)\|.
\end{align*}
\end{thm}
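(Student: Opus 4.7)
The plan is to prove the two inequalities separately. For the upper bound $\sup_x \|A_x\| \leq \|A + \mathcal{K}(F_\alpha^p)\|$, I would combine the isometric invariance of conjugation by the shifts $\widetilde{C}_z$ with the fact that compact operators in the Toeplitz algebra have vanishing limit operators (Proposition \ref{compactconv}). Concretely, for any $K \in \mathcal{K}(F_\alpha^p) \subset \mathcal{T}_{p,\alpha}$ and any $x \in \mathcal{M}\setminus\mathbb{C}^n$, the limit operator of $A+K$ equals $A_x + K_x = A_x$, while each shifted operator $(A+K)_{z_\gamma}$ has norm $\|A+K\|$ because $\widetilde{C}_z$ is an isometry. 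Lower semicontinuity of the operator norm under $\ast$-strong convergence then yields $\|A_x\| \leq \|A+K\|$; taking the infimum over $K$ and the supremum over $x$ completes this direction.

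For the lower bound $\|A + \mathcal{K}(F_\alpha^p)\| \leq \|P_\alpha\| \sup_x \|A_x\|$ the strategy is to localize at infinity and then shift back to a fixed ball. Since $P_\alpha M_{\chi_{B(0,R)}}|_{F_\alpha^p}$ is compact by Proposition \ref{cpt}, the decomposition $\Id_{F_\alpha^p} = P_\alpha M_{\chi_{B(0,R)}}|_{F_\alpha^p} + P_\alpha M_{1-\chi_{B(0,R)}}|_{F_\alpha^p}$ gives $\|A + \mathcal{K}(F_\alpha^p)\| \leq \|AP_\alpha|_{\mathbb{C}^n \setminus B(0,R)}\|$ for every $R>0$. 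Proposition \ref{prop31} then bounds this by $\sup_{|w|>R-r_t}\|AP_\alpha|_{B(w,r_t)}\| + \varepsilon$ for a suitable $t$, using that $B(w,r_t)\cap(\mathbb{C}^n\setminus B(0,R))$ is empty when $|w|\leq R-r_t$. A direct shift computation based on Lemma \ref{lmm20} produces the identity
\[
\|AP_\alpha|_{B(w,r_t)}\| = \|A_{-w} P_\alpha|_{B(0,r_t)}\|,
\]
so the localization has been translated onto the fixed ball $B(0,r_t)$ at the cost of shifting $A$.

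The concluding step is a limit operator extraction. I would select a near-optimal $w_R$ for each large $R$ (which forces $|w_R|\to\infty$) and pass to a subnet along which $-w_{R_\gamma} \to x$ for some $x \in \mathcal{M}\setminus\mathbb{C}^n$, so that $A_{-w_{R_\gamma}}\to A_x$ $\ast$-strongly by \cite[Corollary 5.4]{Bauer_Isralowitz}. The key point is to upgrade this strong convergence to norm convergence on the relevant localized piece, and for this I would use once more that $P_\alpha M_{\chi_{B(0,r_t)}}$ is compact: strong convergence combined with a compact operator on the right gives $A_{-w_{R_\gamma}} P_\alpha M_{\chi_{B(0,r_t)}} \to A_x P_\alpha M_{\chi_{B(0,r_t)}}$ in operator norm. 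Consequently $\|A_{-w_{R_\gamma}}P_\alpha|_{B(0,r_t)}\| \to \|A_x P_\alpha|_{B(0,r_t)}\| \leq \|P_\alpha\|\|A_x\|$, and chaining the inequalities while sending $\varepsilon \to 0$ yields the lower bound.

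The main obstacle I expect is precisely this last limit passage: the $\ast$-strong convergence $A_{-w_{R_\gamma}} \to A_x$ alone is not enough to control a supremum over all unit $g$ supported in $B(0,r_t)$, so the whole argument hinges on the compactness of $P_\alpha M_{\chi_{B(0,r_t)}}$ (Proposition \ref{cpt}) to turn strong convergence into norm convergence on that restriction.
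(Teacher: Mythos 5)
Your argument is correct and follows the paper's proof essentially line for line: the upper bound via isometric invariance, vanishing limit operators of compacts, and lower semicontinuity of the norm under strong convergence; the lower bound via the compact cutoff $P_\alpha M_{\chi_{B(0,R)}}$, the localization $\vertiii{\cdot}_t$ from Proposition \ref{prop31}, the shift identity $\|AP_\alpha M_{\chi_{B(w,r_t)}}\| = \|A_{-w}P_\alpha M_{\chi_{B(0,r_t)}}\|$ from Lemma \ref{lmm20}, a limit-operator extraction along a subnet in $\mathcal{M}\setminus\mathbb{C}^n$, and the crucial upgrade from strong to norm convergence through the compact factor $P_\alpha M_{\chi_{B(0,r_t)}}$. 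The only cosmetic difference is that the paper phrases the lower bound as a proof by contradiction (reducing first to $\inf_K \|AP_\alpha + K\| \leq \sup_x\|A_xP_\alpha\|$) whereas you run the same chain of inequalities forward; the ingredients and the order in which they are invoked are identical.
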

\begin{proof}
Let $(z_\gamma)$ be a net converging to $x \in \mathcal{M} \setminus \mathbb{C}^n$. $(A + K)_{z_\gamma}$ converges $\ast$-strongly to $A_x$ for every $K \in \mathcal{K}(F_\alpha^p)$ since $K_{z_\gamma}$ converges to $0$ by Proposition \ref{compactconv}. Using Banach-Steinhaus and the fact that $\widetilde{C}_w$ is an isometry for each $w \in \mathbb{C}^n$, one gets
\begin{align*}
\| A_x\| \leq \sup_{\gamma} \| \widetilde{C}_{z_\gamma} (A + K) \widetilde{C}_{-z_\gamma}\| = \| A+K\|.
\end{align*}
Since this holds for all $x \in \mathcal{M} \setminus \mathbb{C}^n$ and all compact operators $K$, the second inequality follows. We give a sketch for the proof of the first inequality, and refer to \cite[Theorem 28]{Hagger} for the missing details.\\
It can be seen that it suffices to prove
\begin{align*}
\inf_{K \in \mathcal{K}(L_\alpha^p, F_\alpha^p)} \| AP_\alpha + K\| \leq \sup_{x \in \mathcal{M} \setminus \mathbb{C}^n} \| A_x P_\alpha\|,
\end{align*}
where $\mathcal{K}(L_\alpha^p, F_\alpha^p)$ is the set of compact operators from $L_\alpha^p$ to $F_\alpha^p$. This will be proven by contradiction: Assume that
\begin{align*}
\inf_{K \in \mathcal{K}(L_\alpha^p, F_\alpha^p)} \| AP_\alpha + K\| > \sup_{x \in \mathcal{M} \setminus \mathbb{C}^n} \| A_x P_\alpha\| + \varepsilon
\end{align*}
for some $\varepsilon > 0$. By Proposition \ref{cpt},
\begin{align*}
\| AP_\alpha|_{\mathbb{C}^n \setminus B(0,s)}\| = \| AP_\alpha - AP_\alpha M_{\chi_{B(0,s)}}\| > \sup_{x \in \mathcal{M} \setminus \mathbb{C}^n} \| A_x P_\alpha\| + \varepsilon
\end{align*}
for all $s > 0$. By Proposition \ref{prop31} there is a $t \in (0,1)$ with
\begin{align*}
\vertiii{AP_\alpha|_{\mathbb{C}^n \setminus B(0,s)}}_t \geq \| AP_\alpha |_{\mathbb{C}^n \setminus B(0,s)}\| - \frac{\varepsilon}{2} > \sup_{x \in \mathcal{M} \setminus \mathbb{C}^n} \| A_x P_\alpha\| + \frac{\varepsilon}{2}.
\end{align*}
Using the definition, for each $s > 0$ there must be some $w_s \in \mathbb{C}^n$ such that
\begin{align*}
\| AP_\alpha M_{\chi_{B(w_s, r_t)}}\| \geq \| AP_\alpha M_{\chi_{B(w_s,r_t) \setminus B(0,s)}}\| > \sup_{x \in \mathcal{M} \setminus \mathbb{C}^n} \| A_x P_\alpha\| + \frac{\varepsilon}{2}.
\end{align*}
Using that $M_{\chi_{B(w_s,r_t)}} = C_{w_s} M_{\chi_{B(0,r_t)}} C_{-w_s}$, $P_\alpha C_{w_s} = \widetilde{C}_{w_s} P_\alpha$ (Lemma \ref{lmm20}) and the fact that $\widetilde{C}_{-w_s}$ and $C_{w_s}$ are surjective isometries, we get
\begin{align*}
\| A_{-w_s} P_\alpha M_{\chi_{B(0,r_t)}}\| > \sup_{x \in \mathcal{M} \setminus \mathbb{C}^n} \| A_x P_\alpha\| + \frac{\varepsilon}{2}.
\end{align*}
Since $(w_s)$ clearly cannot converge in $\mathbb{C}^n$ and $\mathcal{M}$ is compact, there is a subnet of $(w_s)$, also denoted by $(w_s)$, such that $-w_s$ converges to $y \in \mathcal{M} \setminus \mathbb{C}^n$ and $A_{-w_s}$ converges to $A_{y}$ strongly, which implies by the compactness of $P_\alpha M_{\chi_{B(0,r_t)}}$ (Proposition \ref{cpt})
\begin{align*}
\| A_{-w_s} P_\alpha M_{\chi_{B(0,r_t)}} \| \to \| A_{y} P_\alpha M_{\chi_{B(0,r_t)}}\|.
\end{align*}
But this implies
\begin{align*}
\| A_{y}P_\alpha M_{\chi_{B(0,r_t)}} \| \geq \sup_{x \in \mathcal{M} \setminus \mathbb{C}^n} \| A_x P_\alpha\| + \frac{\varepsilon}{2},
\end{align*}
which is a contradiction.
\end{proof}
An improvement of this can be obtained if $p = 2$. Comparing to \cite[Theorem 7.1]{Bauer_Isralowitz} (or \cite[Theorem 5.6]{Mitkovski_Suarez_Wick}, \cite[Theorem 10.1]{Suarez} in case of the unit ball), this shows that the supremum is actually a maximum.
\begin{thm}
For $A \in \mathcal{T}_{2,\alpha}$ it is
\begin{align*}
\| A + \mathcal{K}(F_\alpha^2)\| = \max_{x \in \mathcal{M} \setminus \mathbb{C}^n} \| A_x\|.
\end{align*}
\end{thm}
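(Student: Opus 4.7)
The plan is to first reduce the statement to showing that the supremum $\sup_{x \in \mathcal{M} \setminus \mathbb{C}^n} \|A_x\|$ is attained. Since for $p = 2$ the projection $P_\alpha$ is the orthogonal projection of the Hilbert space $L_\alpha^2$ onto $F_\alpha^2$, we have $\|P_\alpha\| = 1$. Substituting this into Theorem \ref{thm27} immediately yields $\|A + \mathcal{K}(F_\alpha^2)\| = \sup_{x \in \mathcal{M} \setminus \mathbb{C}^n} \|A_x\|$, so the remaining content of the theorem is that this supremum is a maximum.

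To prove attainment, we run the argument of Lemma \ref{lmm30} with $\nu$ replaced by the upper functional $\mu(B|_F) := \|BP_\alpha|_F\|$. By Proposition \ref{prop31}, there exists a sequence $(t_k)_{k \in \mathbb{N}}$ with $r_{t_{k+1}} > 2 r_{t_k}$ such that
\begin{align*}
\vertiii{B P_\alpha|_F}_{t_k} \geq \|B P_\alpha|_F\| - 2^{-k-1}
\end{align*}
holds for all $F \subseteq \mathbb{C}^n$ and all $B \in \{A\} \cup \{A_x;\ x \in \mathcal{M} \setminus \mathbb{C}^n\}$. Pick $(x_j) \subseteq \mathcal{M} \setminus \mathbb{C}^n$ with $\|A_{x_j}\| \to S := \sup_x \|A_x\|$. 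The crucial point is that Lemma \ref{lmm28} has a direct $\mu$-analogue: for the point $y$ constructed there (as a limit of $A_{z_\gamma - w}$ with $z_\gamma \to x$), the identity $A_y P_\alpha = C_{-w} A_x P_\alpha C_w$ gives both $\mu(A_y|_{B(0, r)}) = \mu(A_x|_{B(w, r)})$ and $\mu(A_y|_{B(0, r + |w|)}) \geq \mu(A_x|_{B(0, r)})$, by replacing infima with suprema throughout the proof. Adapting the iterative construction of Lemma \ref{lmm30} to this upper-functional setting produces a sequence $(y_j) \subseteq \mathcal{M} \setminus \mathbb{C}^n$ with
\begin{align*}
\|A_{y_j} P_\alpha|_{B(0, 4 r_{t_k})}\| \geq \|A_{x_j}\| - 2^{-k+1}
\end{align*}
for all $j$ and $k$.

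To conclude, Proposition \ref{prop27} allows us to pass to a subnet $(y_{j_\gamma})$ along which $A_{y_{j_\gamma}}$ converges strongly to some $A_y$ for $y \in \mathcal{M} \setminus \mathbb{C}^n$. Since $P_\alpha M_{\chi_{B(0, 4 r_{t_k})}}$ is compact (Proposition \ref{cpt}), the composition $(A_{y_{j_\gamma}} - A_y) P_\alpha M_{\chi_{B(0, 4 r_{t_k})}}$ converges to $0$ in operator norm, hence $\|A_{y_{j_\gamma}} P_\alpha|_{B(0, 4 r_{t_k})}\| \to \|A_y P_\alpha|_{B(0, 4 r_{t_k})}\|$. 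Combining with the estimate above,
\begin{align*}
\|A_y\| \geq \|A_y P_\alpha|_{B(0, 4 r_{t_k})}\| \geq S - 2^{-k+1},
\end{align*}
and letting $k \to \infty$ gives $\|A_y\| \geq S$, so $\|A_y\| = S$ and the supremum is attained. The main obstacle is producing a single $y_j$ that simultaneously gives near-optimal norm estimates on all of the nested balls $B(0, 4 r_{t_k})$; this requires a careful iterative application of Lemma \ref{lmm28} following the template of Lemma \ref{lmm30} (see also \cite[Lemma 25]{Hagger}), now applied to the sup-based functional $\mu$ rather than the inf-based $\nu$.
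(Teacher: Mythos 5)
Your proposal is correct and follows essentially the same route as the paper: reduce to $\|P_\alpha\|=1$ via Theorem \ref{thm27}, and then show the supremum is attained by rerunning Lemma \ref{lmm30} with $\nu,\nu_t$ replaced by the sup-based functionals $\|\cdot\,P_\alpha|_F\|$ and $\vertiii{\cdot}_t$, using Proposition \ref{prop31}, the transformation identity $A_y P_\alpha = C_{-w}A_x P_\alpha C_w$ from Lemma \ref{lmm28}, Proposition \ref{prop27} for a convergent subnet, and Proposition \ref{cpt} to upgrade strong to norm convergence on the compactly supported cut-offs. The paper only sketches this step, and you have fleshed it out faithfully.
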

\begin{proof}
Replacing $\nu$ and $\nu_t$ by $\|\cdot \|$ and $\vertiii{\cdot}_t$ in the proof of Lemma \ref{lmm30} and using Proposition \ref{prop31}, one can show that there is a $y \in \mathcal{M} \setminus \mathbb{C}^n$ such that $\| A_y P_\alpha\| = \sup \{ \| A_xP_\alpha\|; x \in \mathcal{M} \setminus \mathbb{C}^n \}$. Since $\| A_x P_\alpha\| = \| A_x\|$ for $p=2$, we get that the supremum in the theorem is actually a maximum. The equality follows by Theorem \ref{thm27} with $\| P_\alpha\| = 1$.
\end{proof}
\section{Symbols of vanishing oscillation and vanishing mean oscillation}
For a bounded and continuous function $f: \mathbb{C}^n \to \mathbb{C}$ we define
\begin{align*}
\operatorname{Osc}_z^r(f) := \sup \{ |f(z) - f(w)|; w \in \mathbb{C}^n, |z-w|\leq r\} 
\end{align*}
for $z \in \mathbb{C}^n$ and $r>0$. A basic result about oscillations is that
\begin{align*}
\lim_{|z| \to \infty} \operatorname{Osc}_z^r(f) = 0 \text{ for all } r>0 \Leftrightarrow \lim_{|z| \to \infty} \operatorname{Osc}_z^r(f) = 0 \text{ for one } r>0.
\end{align*}
Set
\begin{align*}
\VO(\mathbb{C}^n) := \big \{ f: \mathbb{C}^n \to \mathbb{C}; f \text{ bounded and continuous}, \lim_{|z|\to \infty} \operatorname{Osc}_z^1 (f) = 0 \big \}.
\end{align*}
It is easy to see that $\VO(\mathbb{C}^n) \subset \BUC(\mathbb{C}^n)$. In the case $p=2$ it is well-known \cite{Berger_Coburn, Stroethoff} that for $\VO$-symbols the Fredholm information is located at the boundary, i.e.~for $f \in \VO(\mathbb{C}^n)$ it holds
\begin{align*}
\sigma_{ess}(T_f) = f(\partial \mathbb{C}^n),
\end{align*}
where $f(\partial \mathbb{C}^n)$ denotes the set of limit points of $f(z)$ as $|z| \to \infty$. We get those results for every $1 < p < \infty$ as a special case of Corollary \ref{cor31}:
\begin{thm} \label{thm32}
For $f \in \VO(\mathbb{C}^n)$ it holds
\begin{align*}
\sigma_{ess}(T_f) = f(\partial \mathbb{C}^n).
\end{align*}
\end{thm}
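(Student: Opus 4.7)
The plan is to combine Corollary \ref{cor31} with a direct identification of the limit operators $(T_f)_x$: I will show that for $f \in \VO(\mathbb{C}^n)$, each $(T_f)_x$ with $x \in \mathcal{M} \setminus \mathbb{C}^n$ is a scalar multiple of the identity, $\lambda(x) \Id$, with $\lambda(x) \in f(\partial \mathbb{C}^n)$, and that conversely every $\lambda \in f(\partial \mathbb{C}^n)$ arises in this way. Corollary \ref{cor31} then immediately gives $\sigma_{ess}(T_f) = \bigcup_x \sigma((T_f)_x) = f(\partial \mathbb{C}^n)$.

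To carry out the first direction, I would fix $x \in \mathcal{M} \setminus \mathbb{C}^n$ and a net $(z_\gamma) \subseteq \mathbb{C}^n$ with $z_\gamma \to x$. Since $\mathbb{C}^n \hookrightarrow \mathcal{M}$ is a topological embedding and $x \notin \mathbb{C}^n$, any bounded subnet of $(z_\gamma)$ would force $x \in \mathbb{C}^n$, so $|z_\gamma| \to \infty$. By Lemma \ref{lmm20}(iii), $(T_f)_{z_\gamma} = T_{f \circ \tau_{z_\gamma}}$; the natural decomposition is
\begin{align*}
f \circ \tau_{z_\gamma} = f(-z_\gamma) + g_\gamma, \qquad g_\gamma(w) := f(w - z_\gamma) - f(-z_\gamma).
\end{align*}
Passing to a subnet (which does not affect the uniquely determined limit operator), we may assume $f(-z_\gamma) \to \lambda$ for some $\lambda \in \mathbb{C}$, and then $\lambda \in f(\partial \mathbb{C}^n)$ by definition. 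The vanishing-oscillation hypothesis yields $\sup_{|w| \leq R} |g_\gamma(w)| \leq \Osc_{-z_\gamma}^R(f) \to 0$ for every $R > 0$, while $\|g_\gamma\|_\infty \leq 2\|f\|_\infty$.

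The main technical step is to establish $T_{g_\gamma} \to 0$ $\ast$-strongly. For $h \in F_\alpha^p$ and $\varepsilon > 0$, I would pick $R$ so large that $\int_{|w| > R} |h|^p \, d\mu_{p\alpha/2} < \varepsilon$ (possible by the Gaussian decay in the $F_\alpha^p$-norm) and then split
\begin{align*}
\|g_\gamma h\|_{p,\alpha}^p \leq \Big( \sup_{|w| \leq R} |g_\gamma(w)| \Big)^p \|h\|_{p,\alpha}^p + (2\|f\|_\infty)^p \varepsilon.
\end{align*}
Letting $\gamma$ grow yields $\limsup_\gamma \|g_\gamma h\|_{p,\alpha}^p \leq (2\|f\|_\infty)^p \varepsilon$, and since $\varepsilon$ was arbitrary, $g_\gamma h \to 0$ in $L_\alpha^p$, whence $T_{g_\gamma} h = P_\alpha(g_\gamma h) \to 0$. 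The same cutoff argument applied to $\overline{g_\gamma}$, together with $T_{g_\gamma}^\ast = T_{\overline{g_\gamma}}$ under the pairing from Proposition \ref{dualityfock}, gives the adjoint convergence. Combining with the norm-convergence $f(-z_\gamma) \Id \to \lambda \Id$ yields $(T_f)_x = \lambda \Id$, so $\sigma((T_f)_x) = \{\lambda\} \subseteq f(\partial \mathbb{C}^n)$.

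For the reverse inclusion, given $\lambda \in f(\partial \mathbb{C}^n)$ I would choose $w_k$ with $|w_k| \to \infty$ and $f(w_k) \to \lambda$, and use compactness of $\mathcal{M}$ to extract a subnet of $(-w_k)$ that converges to some $x \in \mathcal{M} \setminus \mathbb{C}^n$; the argument above then produces $(T_f)_x = \lambda \Id$, so that $\lambda \in \sigma((T_f)_x) \subseteq \sigma_{ess}(T_f)$. I expect the $\ast$-strong convergence $T_{g_\gamma} \to 0$ to be the only point requiring genuine care, but thanks to VO (giving uniform smallness of $g_\gamma$ on compacts) and the Gaussian weight (controlling the tail of $h$) it is a routine cutoff estimate.
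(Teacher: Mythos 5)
Your proof is correct and follows essentially the same route as the paper: both invoke Corollary \ref{cor31} and then identify each limit operator $(T_f)_x$ as a scalar multiple of the identity by using that the shifted symbol $f\circ\tau_{z_\gamma}$ converges uniformly on compact sets to a constant, thanks to the vanishing-oscillation hypothesis. The only cosmetic difference is that the paper appeals directly to $f(z_\gamma)\to x(f)$ (using the Gelfand-transform picture of $\BUC$ on $\mathcal{M}$) and then asserts the resulting strong convergence $T_{f\circ\tau_{z_\gamma}}\overset{s}{\to}T_{x(f)}$, whereas you spell out the decomposition $f\circ\tau_{z_\gamma}=f(-z_\gamma)+g_\gamma$ and carry out an explicit tail/compact-set cutoff estimate to show $T_{g_\gamma}\to 0$ $\ast$-strongly; both reduce to the same observation and your more explicit argument also sidesteps a minor sign slip in the paper's computation of $(f\circ\tau_{z_\gamma})(0)$.
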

\begin{proof}
By Corollary \ref{cor31} we need to show that
\begin{align*}
\bigcup_{x \in \mathcal{M} \setminus \mathbb{C}^n} \sigma({(T_f)_x}) = f(\partial \mathbb{C}^n).
\end{align*}
Let $(z_\gamma)$ be a net in $\mathbb{C}^n$ converging to $x \in \mathcal{M} \setminus \mathbb{C}^n$. Since $f \in \BUC(\mathbb{C}^n)$ it is $f\circ \tau_{z_\gamma}(0) = f(z_\gamma) \to x(f)$. Further, observe that
\begin{align*}
|(f \circ \tau_{z_\gamma})(0) - (f \circ \tau_{z_\gamma})(w)| = |f(z_\gamma) - f(z_\gamma - w)| \leq \operatorname{Osc}_{z_\gamma}^r(f) \to 0
\end{align*}
for $|w| \leq r$. Therefore $f \circ \tau_{z_\gamma}$ converges uniformly on compact subsets to the constant function $x(f)$. Hence, using Lemma $\ref{lmm20}$,
\begin{align*}
(T_f)_{z_\gamma} = T_{f \circ \tau_{z_\gamma}} \overset{s}{\to} T_{x(f)} = (T_f)_x
\end{align*}
where $T_{x(f)}$ is just $x(f)\cdot \Id$, thus $\sigma( (T_f)_x) = \{ x(f)\}$. But since $f(z_\gamma)$ converges to $x(f)$, this needs to be in $f(\partial \mathbb{C}^n)$ and hence
\begin{align*}
\sigma_{ess} (T_f) \subseteq f(\partial \mathbb{C}^n).
\end{align*}
On the other hand, if $w \in f(\partial \mathbb{C}^n)$, let $(z_m)$ be a sequence such that $f(z_m) \to w$. Using Proposition \ref{prop27} we may choose a convergent subnet $((T_f)_{z_\gamma})_\gamma$ of $((T_f)_{z_m})_m$ and, as above, it converges to $w \Id$ and we get the other implication.
\end{proof}
We also get the following corollary for symbols with vanishing mean oscillation (see e.g.~\cite{Bauer} for a definition):
\begin{cor}
If $f \in \VMO(\mathbb{C}^n) \cap L^\infty (\mathbb{C}^n)$, then
\begin{align*}
\sigma_{ess}(T_f) = \tilde{f}(\partial \mathbb{C}^n),
\end{align*}
where $\tilde{f}$ is the Berezin transform of $f$.
\end{cor}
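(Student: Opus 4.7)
The plan is to reduce the $\VMO$ case to the $\VO$ case already settled in Theorem \ref{thm32}, by passing from $f$ to its Berezin transform. Two standard facts about the Berezin transform (see e.g.\ \cite{Bauer}) will be the key inputs. First, if $f \in \VMO(\mathbb{C}^n) \cap L^\infty(\mathbb{C}^n)$, then $\tilde f \in \VO(\mathbb{C}^n) \cap L^\infty(\mathbb{C}^n)$. Second, the operator $T_f - T_{\tilde f}$ is compact on $F_\alpha^p$.

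Granting these two ingredients, the argument collapses into two lines. Compactness of $T_f - T_{\tilde f}$ gives $\sigma_{ess}(T_f) = \sigma_{ess}(T_{\tilde f})$. Since $\tilde f \in \VO(\mathbb{C}^n)$, Theorem \ref{thm32} applies to $\tilde f$ and yields $\sigma_{ess}(T_{\tilde f}) = \tilde f(\partial \mathbb{C}^n)$. Chaining these two identities proves the claim.

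The main obstacle is verifying the two ingredient facts in the generality required here. The first is essentially independent of $p$: one estimates $\Osc_z^r(\tilde f)$ from the pointwise identity $\tilde f(z) = \widetilde{f \circ \tau_z}(0)$ using Gaussian decay of the reproducing kernels together with the vanishing mean oscillation of $f$. The second typically appears in the literature only for $p = 2$, but in the present framework it can be obtained without reverting to the Hilbert space case: both $T_f$ and $T_{\tilde f}$ lie in $\mathcal{T}_{p,\alpha}$, so by the compactness criterion of \cite{Bauer_Isralowitz} it suffices to show that every limit operator of $T_f - T_{\tilde f}$ vanishes. A computation in the spirit of the proof of Theorem \ref{thm32}, using Lemma \ref{lmm20} and the $\VMO$ hypothesis on $f$, reduces the limit operators of both $T_f$ and $T_{\tilde f}$ at each $x \in \mathcal{M} \setminus \mathbb{C}^n$ to the scalar multiplication operator $\tilde f(x) \cdot \Id$, so their difference has zero limit operators everywhere, as required.
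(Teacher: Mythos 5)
Your overall reduction is exactly the paper's: pass from $f$ to $\tilde f \in \VO(\mathbb{C}^n)$, show $T_f - T_{\tilde f}$ is compact, and then invoke Theorem \ref{thm32}. The difference is in how you propose to justify the compactness of $T_f - T_{\tilde f}$, and that step has a gap.

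You claim that ``a computation in the spirit of the proof of Theorem \ref{thm32}'' shows $(T_f)_x = \tilde f(x)\cdot\Id$ for each $x \in \mathcal{M}\setminus\mathbb{C}^n$. For $T_{\tilde f}$ this is indeed the VO argument, which rests on $\tilde f\circ\tau_{z_\gamma}$ converging to the constant $x(\tilde f)$ \emph{uniformly on compacts} --- precisely the pointwise oscillation control that $\VO$ provides. But for $f$ itself, which is only assumed bounded with vanishing \emph{mean} oscillation, the translates $f\circ\tau_{z_\gamma}$ need not converge locally uniformly to anything, so the Theorem \ref{thm32} computation simply does not apply. Establishing $(T_f)_x = \tilde f(x)\cdot\Id$ amounts to showing $(T_{f-\tilde f})_x = 0$, which is exactly what you set out to prove --- so as written the argument is circular. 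To break the circle you would have to produce an independent estimate showing $T_{(f-\tilde f)\circ\tau_{z_\gamma}} \to 0$ strongly, and any such estimate is going to use the $L^2$-mean decay $\widetilde{|f-\tilde f|^2}\in C_0(\mathbb{C}^n)$, which is the very input the paper cites from \cite{Bauer}. In other words, you haven't found a way around the Hilbert-space-flavoured ingredient; you have only deferred it.

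The paper's route is more economical: quote $\tilde f\in\VO(\mathbb{C}^n)$ and $\widetilde{|f-\tilde f|^2}\in C_0(\mathbb{C}^n)$ from \cite{Bauer}, deduce $\widetilde{f-\tilde f}\in C_0(\mathbb{C}^n)$ by Cauchy--Schwarz, and then apply the Berezin-transform compactness criterion of \cite{Bauer_Isralowitz} directly to conclude $T_{f-\tilde f}$ is compact. If you want to phrase it in limit-operator language, you should still cite the $C_0$ fact and then verify that it forces all limit operators of $T_{f-\tilde f}$ to vanish --- but you cannot get that for free from the VO-style symbol computation.
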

\begin{proof}
It is $\tilde{f} \in \VO(\mathbb{C}^n)$ \cite[Corollary 2.8]{Bauer} and also $\widetilde{|f - \tilde{f}|^2} \in C_0(\mathbb{C}^n)$, i.e. $\widetilde{|f - \tilde{f}|^2}$ vanishes at infinity \cite[Theorem 5.3]{Bauer}. This of course implies $\widetilde{f - \tilde{f}} \in C_0(\mathbb{C}^n)$. Therefore, $T_{f - \tilde{f}}$ is compact \cite[Theorem 1.1]{Bauer_Isralowitz} and hence
\[ \sigma_{ess}(T_f) = \sigma_{ess}(T_{\tilde{f}}) = \tilde{f}(\partial \mathbb{C}^n). \qedhere \]
\end{proof}

\begin{rmk}
After submitting this paper, we noticed that Theorem \ref{thm32} was independently found recently by Al-Qabani and Virtanen \cite{Al-Qabani_Virtanen} using different methods.
\end{rmk}

\bibliographystyle{amsplain}
\bibliography{References}

\bigskip

\noindent
\begin{tabular}{l l}
Robert Fulsche & Raffael Hagger\\
\href{fulsche@math.uni-hannover.de}{\Letter fulsche@math.uni-hannover.de} & \href{raffael.hagger@math.uni-hannover.de}{\Letter raffael.hagger@math.uni-hannover.de}
\end{tabular}
\\

\noindent
Both authors:\\
Institut f\"{u}r Analysis\\
Leibniz Universit\"at Hannover\\
Welfengarten 1\\
30167 Hannover\\
GERMANY
\end{document}